\newcommand*{\MRref}[2]{ \href{http://www.ams.org/mathscinet-getitem?mr=#1}{MR \textbf{#1}}}
\newcommand*{\arxiv}[1]{\href{http://www.arxiv.org/abs/#1}{arXiv: #1}}
\newtheorem{theorem}{Theorem}[section]
\newtheorem{lem}[theorem]{Lemma}
\newtheorem{proposition}[theorem]{Proposition}
\theoremstyle{remark}
\newtheorem{example}[theorem]{Example}
\newtheorem{remark}[theorem]{Remark}
\theoremstyle{definition}
\newtheorem{definition}[theorem]{Definition}
\DeclareMathOperator{\Smooth}{S}
\DeclareMathOperator{\Rough}{R}
\DeclareMathOperator{\Hom}{Hom}
\DeclareMathOperator{\im}{im}
\DeclareMathOperator{\spann}{span}
\newcommand*{\Jaci}{\textup i}
\newcommand*{\Jact}{\textup i^*{}}
\newcommand*{\Jacr}{\textup r}
\newcommand*{\nb}{\nobreakdash}
\newcommand*{\defeq}{\mathrel{\vcentcolon=}}
\newcommand*{\eqdef}{\mathrel{=\vcentcolon}}
\newcommand*{\abs}[1]{\lvert#1\rvert}
\newcommand*{\idem}[1]{\langle#1\rangle}
\newcommand*{\bigidem}[1]{\bigl\langle#1\bigr\rangle}
\newcommand*{\opp}[1]{\overline{#1}}
\newcommand*{\N}{\mathbb N}
\newcommand*{\Z}{\mathbb Z}
\newcommand*{\Q}{\mathbb Q}
\newcommand*{\R}{\mathbb R}
\newcommand*{\C}{\mathbb C}
\newcommand*{\F}{\mathbb F}
\newcommand*{\valu}{\upsilon}
\newcommand*{\Ccinf}[1][\infty]{\textup C_\textup c^{#1}}
\newcommand*{\Cat}{\mathcal C}
\newcommand*{\Mod}[1]{\mathfrak{Mod}_{#1}}
\newcommand*{\Gl}{\textup{Gl}}
\newcommand*{\Sl}{\textup{Sl}}
\newcommand*{\diff}{\textup d}
\newcommand*{\lefts}{\textup l}
\newcommand*{\rights}{\textup r}
\newcommand*{\Op}{T}
\newcommand*{\Galg}{\mathcal G}
\newcommand*{\ST}{S}
\newcommand*{\Un}{U}
\newcommand*{\UC}[1]{U_{#1}}
\newcommand*{\integ}{\mathcal O}
\newcommand*{\unif}{\varpi}
\newcommand{\ring}{R}
\newcommand{\bt}{\mathcal B(G)}
\newcommand*{\Phr}{\Phi^\textup{red}}
\newcommand*{\Hecke}{\mathcal H}
\newcommand*{\epal}{\epsilon_\alpha}
\newcommand*{\twomatrix}[4]{\begin{pmatrix}#1&#2\\#3&#4\end{pmatrix}}
\newcommand*{\stwomatrix}[4]{\left(\begin{smallmatrix}#1&#2\\#3&#4\end{smallmatrix}\right)}
\begin{document}

\title{The Second Adjointness Theorem for reductive \(p\)-adic groups}
\author{Ralf Meyer}
\email{rameyer@uni-math.gwdg.de}
\author{Maarten Solleveld}
\email{maarten@uni-math.gwdg.de}
\address{Mathematisches Institut and Courant Centre ``Higher order structures''\\Georg-August Universit\"at G\"ottingen\\Bunsenstra{\ss}e 3--5\\37073 G\"ottingen\\Germany}

\begin{abstract}
  We prove that the Jacquet restriction functor for a parabolic subgroup of a reductive group over a non-Archimedean local field is right adjoint to the parabolic induction functor for the opposite parabolic subgroup, in the generality of smooth group representations on \(\ring\)\nb-modules for any unital ring~\(\ring\) in which the residue field characteristic is invertible.\\
\textbf{Correction:} it turned out that the paper contains a mistake (in Lemma \ref{lem:rank_reduction}), which the authors
have been unable to fix. This renders the proof of the main theorem incomplete.
\end{abstract}

\thanks{Supported by the German Research Foundation (Deutsche Forschungsgemeinschaft (DFG)) through the Institutional Strategy of the University of G\"ottingen.}
\maketitle

\section{Introduction}
\label{sec:intro}

Let~\(G\) be a reductive group over a non-Archimedan field~\(\F\).  Let~\(p\) be the charactersitic of the residue field of~\(\F\).  Let~\(P\) be a parabolic subgroup of~\(G\) and~\(M\) a Levi subgroup of~\(P\).  Jacquet defined two functors that play an important role in the smooth representation theory of~\(G\).  On the one hand, there is \emph{parabolic induction~\(\Jaci_P^G\)}, which goes from \(M\)\nb-representations via \(P\)\nb-representations to \(G\)\nb--representations; on the other hand, there is \emph{Jacquet restriction~\(\Jacr^P_G\)}, which goes the other way round.

According to the \emph{First Adjointness Theorem}, \(\Jacr^P_G\)~is left adjoint to~\(\Jaci_P^G\), that is,
\[
\Hom_M (\Jacr^P_G(V),W) \cong \Hom_G(V,\Jaci_P^G (W))
\]
for all smooth representations~\(V\) of~\(G\) and~\(W\) of~\(M\).  This is simply a case of Frobenius reciprocity.  Joseph Bernstein's \emph{Second Adjointness Theorem}, which is much more difficult, asserts that~\(\Jaci_P^G\) is left adjoint to the Jacquet restriction functor for the opposite parabolic subgroup~\(\opp{P}\):
\[
\Hom_G(\Jaci^G_P(W),V) \cong \Hom_M (W, \Jacr_G^{\opp{P}} (V)).
\]
Even Bernstein himself admitted to be suprised by this discovery~\cite{Bernstein:Second_adjointness}.  The depth of the Second Adjointness Theorem is witnessed by the highly non-trivial ingredients in Bernstein's so far unpublished proof.  A later proof by Bushnell~\cite{Bushnell:Localization_Hecke} relies on the Bernstein decomposition.

Bernstein and Bushnell proved the Second Adjointness Theorem only for complex representations.  The situation for smooth representations on vector spaces over fields other than~\(\C\) is more complicated.  Vign\'eras \cite{Vigneras:l-modulaires}*{II.3.15} extended the proof to the case where~\(l\) does not divide the pro-order of the group~\(G\).  If \(l\neq p\) but~\(l\) does divide the pro-order of~\(G\), the Bernstein decomposition is known to fail in some cases.  Nevertheless, if~\(G\) is a classical group, Jean-Fran\c{c}ois Dat~\cite{Dat:Finitude} could establish the Second Adjointness Theorem in such characteristics.  We will use a completely different method to prove it in the general setting of smooth representations on \(\ring\)\nb-modules for any unital ring~\(\ring\) in which~\(p\) is invertible.

As in previous arguments, our proof proceeds via the Stabilisation Theorem.  In Section~\ref{sec:Jacquet} we describe Jacquet's functors in the language of bimodules.  We formulate the Second Adjointness Theorem and two versions of the Stabilisation Theorem and clarify in which sense they are all equivalent.  Many results in this section are also explained in more elementary notation and greater detail in the Master's Thesis of David Guiraud~\cite{Guiraud:Master}.

The heart of this article is Section~\ref{sec:stabilisation}, where we prove the Stabilisation Theorem and hence the Second Adjointness Theorem.  This requires some geometric considerations in the building and a fair amount of Bruhat--Tits theory.  Our proof is effective, that is, provides a quantitative version of the Stabilisation Theorem.

The Second Adjointness Theorem has several important consequences.  With our proof, they become unconditional theorems in greater generality.  We mention some of these applications in Section~\ref{sec:consequences}.  This includes the computation of contragredients of Jacquet induced representations and statements about Jacquet induction and restriction of projective representations and finitely presented representations.  A remarkable consequence established in~\cite{Dat:Finitude} is that the Hecke algebra \(\Hecke(G/\!\!/K,\ring)\) of \(K\)\nb-biinvariant, compactly supported, \(\ring\)\nb-valued functions on~\(G\) is Noetherian if~\(\ring\) is Noetherian, for any compact open subgroup~\(K\) of~\(G\).

\section{Second Adjointness and the Stabilisation Theorem}
\label{sec:Jacquet}

Let~\(\F\) be a non-archimedean local field of residual characteristic~\(p\).  That is, \(\F\) is a finite extension of the field of \(p\)\nb-adic numbers~\(\Q_p\), or a field of Laurent series \(\F_{p^d}[\![t,t^{-1}]\), where~\(\F_{p^d}\) is the field with~\(p^d\) elements.  Let~\(\Galg\) be a connected reductive linear algebraic group defined over~\(\F\), and let \(G = \Galg (\F)\) be its group of \(\F\)\nb-rational points.  This is a totally disconnected locally compact group.

We fix a left Haar measure~\(\mu\) on~\(G\) with \(\mu(K) \in p^\Z\) for every open pro-\(p\)-group \(K \subseteq G\).  Let~\(\ring\) be a unital ring in which~\(p\) is invertible.  For instance, \(\ring\)~may be a field of characteristic not equal to~\(p\).

Given a totally disconnected space~\(X\), let \(\Ccinf(X,\ring)\) be the \(\ring\)\nb-module of compactly supported, locally constant functions \(X\to \ring\).  For \(X = G\) this is an algebra under convolution, which we denote by \(\Hecke(G)\) or \(\Hecke(G,\ring)\).  The idempotent in \(\Hecke(G)\) corresponding to a compact open subgroup~\(K\) (with \(\mu(K) \neq 0 \in \ring\)) is denoted~\(\idem{K}\).

A representation of~\(G\) on an \(\ring\)\nb-module~\(V\) is called \emph{smooth} if every \(v\in V\) is fixed by some open subgroup of~\(G\).  The category of non-degenerate \(\Hecke(G)\)\nb-modules is equivalent to the category of smooth representations of~\(G\) on \(\ring\)\nb-modules, where non-degeneracy means \(\Hecke(G)\cdot V= V\).

Let \(P\subseteq G\) be a parabolic subgroup and let \(P=U\cdot M = M\cdot U\) be its decomposition into a unipotent part~\(U\) and a Levi subgroup~\(M\).  The group~\(M\) is also a reductive linear algebraic group.

We are going to describe the Jacquet functors for~\(P\) as tensor product functors with certain bimodules over \(\Hecke(G)\) and~\(\Hecke(M)\).  As a consequence, both functors have right adjoint functors.  The \emph{Second Adjointness Theorem} identifies the right adjoint of the parabolic induction functor for~\(P\) with the Jacquet restriction functor for the opposite parabolic subgroup~\(\opp{P}\).  We show that this statement is equivalent to the \emph{Stabilisation Theorem}.

\subsection{The Jacquet functors as bimodule tensor products}
\label{sec:Jacquet_bimodule}

To describe the Jacquet functors, we shall use the results in~\cite{Meyer:Smooth} about induction and compact induction, restriction, and coinvariant functors.  Let us begin with the functor~\(\Jacr_G^{\opp{P}}\) for the opposite parabolic subgroup~\(\opp{P}\).

Let~\(V\) be a smooth representation of~\(G\), viewed as a smooth \(\Hecke(G)\)-module.  The functor~\(\Jacr_G^{\opp{P}}\) first restricts the action of~\(G\) on~\(V\) to~\(\opp{P}\) and then takes the space of \(\opp{U}\)\nb-coinvariants, that is, the cokernel of the map
\begin{equation}
  \label{eq:def_coinvariant}
  \Hecke(\opp{U})\otimes V\to V,\qquad
  f\otimes v \mapsto f \cdot v - \int_{\opp{U}} f(\bar u) \,\diff\bar u \cdot v.
\end{equation}
Here \(\opp{P} = M\cdot\opp{U}\) is the Levi decomposition of~\(\opp{P}\).  This coinvariant space inherits a canonical smooth representation of~\(M\), which is then twisted by the modular function \(\delta_{\opp{P}}^{-\nicefrac12} = \delta_P^{\nicefrac12}\).  This function is well-defined and invertible because~\(p\) is invertible in~\(\ring\), see \cite{Vigneras:l-modulaires}*{Section I.2}.

Let~\(\C_1\) denote the one-dimensional trivial representation.  By definition, the \(\opp{U}\)\nb-coinvariant space of~\(V\) is \(\C_1\otimes_{\Hecke(\opp{U})} V\).  The restriction functor from~\(G\) to~\(\opp{P}\) may be written as \(V\mapsto \Hecke(G)\otimes_{\Hecke(G)} V\), where we equip~\(\Hecke(G)\) with the natural \(\Hecke(\opp{P}),\Hecke(G)\)-bimodule structure given by left and right convolution.  This represents the restriction functor because \(\Hecke(G)\otimes_{\Hecke(G)} V \cong V\) for all smooth \(\Hecke(G)\)\nb-modules~\(V\).  Putting both functors together yields
\begin{multline*}
  \Jacr_G^{\opp{P}}(V)
  \cong \C_1 \otimes_{\Hecke(\opp{U})} (\Hecke(G)\otimes_{\Hecke(G)} V)
  \\\cong (\C_1 \otimes_{\Hecke(\opp{U})} \Hecke(G))\otimes_{\Hecke(G)} V
  \cong \Ccinf(\opp{U}\backslash G)\otimes_{\Hecke(G)} V,
\end{multline*}
at least as \(\ring\)\nb-modules.  The right \(\Hecke(G)\)-module structure on \(\Ccinf(\opp{U}\backslash G)\) comes from right convolution and corresponds to the representation \((f\cdot h)(\opp{U}g) \defeq f(\opp{U}gh^{-1})\) because~\(G\) is unimodular.  As left \(\Hecke(M)\)-modules
\[
\C_1 \otimes_{\Hecke(\opp{U})} \Hecke(G) \cong
\C_{\delta_{\opp P}} \otimes \Ccinf(\opp{U}\backslash G),
\]
so the left \(\Hecke(M)\)-module structure on \(\Jacr_G^{\opp{P}} V\) comes from the action of~\(M\) on \(\Ccinf(\opp{U}\backslash G)\) by
\[
(m\cdot f) (\opp{U}g) =  \delta_{\opp{P}}(m)^{\nicefrac12} \cdot f(\opp{U}mg).
\]

Next we turn to the parabolic induction functor~\(\Jaci_P^G\) associated to the parabolic subgroup~\(P\), which maps smooth representations of~\(M\) to smooth representations of~\(G\).  Let~\(W\) be a smooth representation of~\(M\), viewed as a module over \(\Hecke(M)\).  The functor~\(\Jaci_P^G\) first extends the representation from~\(M\) to~\(P\) by letting~\(U\) act trivially; then it twists the action of~\(M\) by the modular function~\(\delta_P^{\nicefrac12}\); finally, it induces from~\(P\) to~\(G\).

Since~\(P\) is cocompact in~\(G\), induction is the same as compact induction.  The compact induction functor is described in \cite{Meyer:Smooth}*{Theorem 4.10} as the tensor product functor \(V\mapsto \Hecke(G)\otimes_{\Hecke(P)} V\), up to a modular function~\(\delta_P\).  Extending the action from~\(M\) to~\(P\) may be written as \(W\mapsto \Hecke(M)\otimes_{\Hecke(M)} W\), where we view \(\Hecke(M)\) as an \(\Hecke(P),\Hecke(M)\)-bimodule using the left action of~\(P\) by \((mu)\cdot f(m') \defeq f(m^{-1}m')\) for \(m,m'\in M\), \(u\in U\).  Ignoring modular functions for the moment, we compute
\begin{multline}
  \label{eq:IPGW}
  \Jaci_P^G(W) = \Hecke(G)\otimes_{\Hecke(P)} (\Hecke(M)\otimes_{\Hecke(M)} W)\\
  \cong \bigl(\Hecke(G)\otimes_{\Hecke(P)} \Hecke(M)\bigr)\otimes_{\Hecke(M)} W
  \cong \Ccinf(G/ U)\otimes_{\Hecke(M)} W.
\end{multline}
We used that \(\Hecke(M) = \Hecke(P) \otimes_{\Hecke(U)} \C_1\) is the \(U\)\nb-coinvariant space of \(\Hecke(P)\), so that
\begin{multline}
  \label{eq:Ucoinvariants}
  \Hecke(G)\otimes_{\Hecke(P)} \Hecke(M)
  \cong \Hecke(G)\otimes_{\Hecke(P)} \Hecke(P)\otimes_{\Hecke(U)}\C_1
  \\\cong \Hecke(G)\otimes_{\Hecke(U)}\C_1
  \cong \Ccinf(G/U).
\end{multline}
The left \(\Hecke(G)\)-module structure on \(\Ccinf(G/U)\) is given simply by the left regular representation, \((g\cdot f) (hU) \defeq f(g^{-1}hU)\).  The right \(\Hecke(M)\)-module structure on \(\Ccinf (G/U)\) in~\eqref{eq:Ucoinvariants} comes from the right translation action of~\(M\), twisted by \(\delta_P^{-1}\).  Taking into account that~\(W\) has been twisted by \(\delta_P^{\nicefrac12}\), we find that the right action of~\(M\) on \(\Ccinf (G/U)\) in~\eqref{eq:IPGW} is \((f\cdot m) (hU) = f(hm^{-1}U)\cdot \delta_P(m)^{-\nicefrac12}\), while~\(G\) acts simply by left translation.

In order to summarise the above statements, we let \(\Mod{A}\) denote the category of smooth \(A\)\nb-modules, that is, left \(A\)\nb-modules~\(X\) with \(A\otimes_A X\cong X\).

\begin{proposition}
  \label{pro:Jacquet_functors}
  The functor
  \[
  \Jacr_G^{\opp{P}}\colon \Mod{\Hecke(G)}\to\Mod{\Hecke(M)}
  \]
  is the tensor product functor for the smooth \(\Hecke(M),\Hecke(G)\)-bimodule \(\Ccinf(\opp{U}\backslash G)\) with module structures from the left and right representations of \(M\) and~\(G\) by
  \[
  (m\cdot f) (h\opp{U}) \defeq f(m^{-1}h\opp{U})\delta_{\opp{P}}(m)^{\nicefrac12},\qquad
  (f\cdot g) (h\opp{U}) \defeq f(hg^{-1}\opp{U}).
  \]

  The functor
  \[
  \Jaci_P^G\colon \Mod{\Hecke(M)}\to\Mod{\Hecke(G)}
  \]
  is the tensor product functor for the smooth \(\Hecke(G),\Hecke(M)\)-bimodule \(\Ccinf(G/U)\) with module structures from the left and right representations of \(G\) and~\(M\) by
  \[
  (g\cdot f) (hU) \defeq f(g^{-1}hU),\qquad
  (f\cdot m) (hU) \defeq f(hm^{-1}U)\delta_P(m)^{-\nicefrac12}.
  \]
\end{proposition}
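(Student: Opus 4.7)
The proof strategy is to verify each identification by decomposing the Jacquet functor into a composition of elementary operations—restriction, extension, coinvariants, (compact) induction, and modular twist—each of which can be written as a tensor product with a concrete bimodule, and then collapsing the composition using associativity of the tensor product.

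For $\Jacr_G^{\opp{P}}$ my plan is to follow exactly the chain of isomorphisms displayed in the excerpt: restriction from $G$ to $\opp{P}$ is tensoring with $\Hecke(G)$ over $\Hecke(G)$, with its natural $\Hecke(\opp{P}), \Hecke(G)$-bimodule structure, and passage to $\opp{U}$-coinvariants is tensoring with $\C_1$ over $\Hecke(\opp{U})$. Associativity of the tensor product turns the composite into tensoring over $\Hecke(G)$ with $\C_1 \otimes_{\Hecke(\opp U)} \Hecke(G)$. The key identification is then
\[
\C_1 \otimes_{\Hecke(\opp U)} \Hecke(G) \cong \Ccinf(\opp U \backslash G)
\]
as right $\Hecke(G)$-modules, which I would establish by the map $[1 \otimes f] \mapsto \bigl(\opp U g \mapsto \int_{\opp U} f(\bar u g)\, \diff \bar u\bigr)$; the left $\Hecke(M)$-action carries the prescribed $\delta_{\opp P}^{\nicefrac12}$ twist, which yields the stated formula.

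For $\Jaci_P^G$ I would proceed analogously. Extension from $M$ to $P$ (letting $U$ act trivially) is represented by $\Hecke(M)$ viewed as an $\Hecke(P), \Hecke(M)$-bimodule via the quotient $P \to M$. Induction from $P$ to $G$ equals compact induction since $P \backslash G$ is compact, and by \cite{Meyer:Smooth}*{Theorem 4.10} this is tensoring with $\Hecke(G)$ as an $\Hecke(G), \Hecke(P)$-bimodule, up to the modular function $\delta_P$. Combining the two yields the tensor product with $\Hecke(G) \otimes_{\Hecke(P)} \Hecke(M)$, which I would identify with $\Ccinf(G/U)$ through the chain of isomorphisms \eqref{eq:Ucoinvariants}, using $\Hecke(M) \cong \Hecke(P) \otimes_{\Hecke(U)} \C_1$. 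Incorporating the $\delta_P^{\nicefrac12}$ twist on $W$ together with the $\delta_P^{-1}$ discrepancy from compact induction produces the net factor $\delta_P^{-\nicefrac12}$ in the right $M$-action.

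The main obstacle in both computations is careful bookkeeping of modular functions: the non-unimodularity of $P$ means the compact induction functor carries an intrinsic $\delta_P$ twist, and one must verify that combining it with the Jacquet normalisation $\delta_P^{\nicefrac12}$ produces the symmetric formulas stated. A secondary verification is that the candidate isomorphisms of function spaces intertwine the bimodule actions on both sides; this follows from the definitions of convolution and translation and the fact that $G$ is unimodular, but it should be checked explicitly at least once.
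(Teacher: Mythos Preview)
Your proposal is correct and follows essentially the same approach as the paper: the proposition is a summary of the discussion immediately preceding it, and you have reproduced that chain of bimodule identifications (restriction/coinvariants for $\Jacr_G^{\opp{P}}$, extension/compact induction via \cite{Meyer:Smooth}*{Theorem 4.10} and~\eqref{eq:Ucoinvariants} for $\Jaci_P^G$) together with the modular-function bookkeeping. There is nothing to add.
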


Recall the \emph{adjoint associativity} isomorphism
\[
\Hom_A(Y\otimes_B V,W) \cong
\Hom_B\bigl(V,\Hom_A(Y,W)\bigr),
\]
where~\(Y\) is an \(A,B\)-bimodule, \(V\) is a \(B\)\nb-module, and~\(W\) is an \(A\)\nb-module.  We may combine this with the smoothening functor~\(\Smooth_B\) for \(B\)\nb-modules to get a functor \(W\mapsto \Smooth_B \bigl(\Hom_A(Y,W)\bigr)\) between categories of non-degenerate modules, where \(\Smooth_B(X) \defeq B\otimes_B X\) and~\(B\) is a self-induced algebra, that is, \(B\otimes_B B\cong B\) (see also~\cite{Meyer:Smooth_rough}).

Since Proposition~\ref{pro:Jacquet_functors} expresses \(\Jaci_P^G\) and~\(\Jacr_G^{\opp{P}}\) as bimodule tensor products, both have a right adjoint.  Recall also that the right adjoint is unique up to natural isomorphism if it exists.

The well-known \emph{First Adjointness Theorem} asserts that the~\(\Jaci_P^G\) is right adjoint to~\(\Jacr_G^P\):
\begin{equation}
  \label{eq:first_adjointness}
  \Hom_{\Hecke(G)}(V, \Jaci_P^G (W)) \cong \Hom_{\Hecke(M)}(\Jacr_G^P (V), W)
\end{equation}
for all representations \(V\) and~\(W\) of \(G\) and~\(M\), respectively.  Adjoint associativity produces another formula for this right adjoint, namely, the functor
\[
W\mapsto \Smooth_{\Hecke(G)} \bigl(\Hom_{\Hecke(M)}(\Ccinf(U\backslash G),W)\bigr)
\]
for smooth \(\Hecke(M)\)-modules~\(W\).  It is an instructive exercise to verify that this functor is naturally isomorphic to~\(\Jaci_P^G\).  The reason is that~\(P\) is cocompact in~\(G\).

\subsection{Formulation of the Second Adjointness Theorem}
\label{sec:second_adjointness_formulate}

Here we are interested in the much deeper statement that~\(\Jacr_G^{\opp{P}}\) is right adjoint to~\(\Jaci_P^G\).  Adjoint associativity shows that~\(\Jaci_P^G\) has the right adjoint functor
\begin{equation}
  \label{eq:Jacquet_ind_adjoint}
  \Jact_G^P\colon V\mapsto \Smooth_{\Hecke(M)}
  \bigl(\Hom_{\Hecke(G)}(\Ccinf(G/U),V)\bigr)
\end{equation}
for a smooth \(\Hecke(G)\)-module~\(V\), where the bimodule structure on \(\Ccinf (G/U)\) is as in Proposition~\ref{pro:Jacquet_functors}.  The Second Adjointness Theorem is therefore equivalent to a natural isomorphism \(\Jact_G^P \cong \Jacr_G^{\opp{P}}\).

Let us first describe~\(\Jact_G^P\) more concretely.  Since \(\Ccinf(G/U)\) is a quotient of \(\Hecke(G)\) by the integration map
\[
(\pi f)(gU) \defeq \int_U f(gu)\,\diff u,
\]
an \(\Hecke(G)\)-module map \(\Ccinf(G/U)\to V\) yields an \(\Hecke(G)\)-module map \(\Hecke(G)\to V\) as well.  Thus \(\Hom_{\Hecke(G)}(\Ccinf(G/U),V)\) is contained in the \emph{roughening}
\[
\Hom_{\Hecke(G)}(\Hecke(G),V) \eqdef \Rough_{\Hecke(G)} V = \Rough (V).
\]
Recall that the roughening of an \(\Hecke(G)\)-module~\(V\) is the projective limit of the invariant subspaces~\((V^K)\), where~\(K\) runs through the directed set of compact open subgroups of~\(G\) and the map \(V^K\to V^L\) for \(K\subseteq L\) is induced by \(\idem{K}\in\Hecke(G)\).  Here \(\idem{K}\in\Hecke(G)\) for a compact open pro-\(p\)-subgroup \(K\subseteq G\) denotes the projection associated to the normalised Haar measure on~\(K\).  It acts on a representation of~\(G\) by projecting to the space of \(K\)\nb-invariants and annihilating all other irreducible \(K\)\nb-subrepresentations.  We get \(\Hom_{\Hecke(G)}(\Hecke(G),V) \cong \varprojlim V^K\) because \(\Hecke(G) = \varinjlim \Ccinf(G/K)\) and \(\Hom_{\Hecke(G)}(\Ccinf(G/K),V) \cong V^K\).

A map \(\Hecke(G)\to V\) factors through the quotient map \(\pi\colon \Hecke(G)\to\Ccinf(G/U)\) if and only if the corresponding element of \(\Rough(V)\) is \(U\)\nb-invariant.  As a result, \(\Jact_G^P(V)\) is the space of \(M\)\nb-smooth, \(U\)\nb-invariant vectors in the roughening of~\(V\); the action of~\(M\) is the extension of the action of~\(V\) to~\(\Rough(V)\), twisted by \(\delta_P^{-\nicefrac12}\).

In order to compare \(\Jacr_G^{\opp{P}}\) and~\(\Jact_G^P\), we need some more notation.

\begin{definition}
  \label{def:well-placed}
  A compact open subgroup \(K \subseteq G\) is called \emph{well-placed} or \emph{in good position} with respect to \(\{P,\opp{P}\}\) if
  \begin{itemize}
  \item the multiplication map \((K \cap U) \times (K \cap M) \times (K \cap \opp{U}) \to K\) is bijective, and the same holds for any other ordering of the three factors;
  \item \(K\) is a pro-\(p\)-group;
  \end{itemize}
\end{definition}

Thus every subgroup \(H \subseteq K\) is also a pro-\(p\)-group and hence has a Haar measure with values in \(\Z[\nicefrac1p]\) or~\(\ring\).  We may regard this Haar measure as a multiplier~\(\idem{H}\) of \(\Hecke(G)\), which is idempotent and satisfies \(\idem{H} \idem{K} = \idem{K} = \idem{K} \idem{H}\).

Since all open subgroups of \(U\) and~\(\opp{U}\) are pro-\(p\)-groups, \(K\) is a pro-\(p\)-group if and only if \(K\cap M\) is a pro-\(p\)-group.  Any sufficiently small compact open subgroup of \(G\) or~\(M\) is a pro-\(p\)-group.

Bruhat--Tits theory produces examples of sequences~\((K_e)_{e\in\N}\) of compact open subgroups of~\(G\) such that
\begin{itemize}
\item each~\(K_e\) is a normal in \(K \defeq K_0\);
\item the sequence \((K_e)_{e\in\N}\) decreases and is a neighborhood basis of~\(1\) in~\(G\);
\item each~\(K_e\) is in good position with respect to \(\{P,\opp{P}\}\).
\end{itemize}
We abbreviate
\[
K^+_e \defeq K_e \cap U,\qquad
K^0_e \defeq K_e \cap M,\qquad
K^-_e \defeq K_e \cap \opp{U},
\]
so that
\begin{multline}
  \label{eq:Kn}
  K_e = K_e^- K_e^0 K_e^+
  = K_e^- K_e^+ K_e^0
  = K_e^+ K_e^- K_e^0
  \\= K_e^+ K_e^0 K_e^-
  = K_e^0 K_e^+ K_e^-
  = K_e^0 K_e^- K_e^+.
\end{multline}
Notice that \(K \cap M\) normalises~\(K_e^\pm\) for all~\(e\) because~\(M\) normalises \(U\) and~\(\opp{U}\).

Now we define a natural map \(\Jact_G^P(V) \to \Jacr_G^{\opp{P}}(V)\).  Recall that~\(\Jact_G^P(V)\) consists of \(M\)\nb-smooth, \(U\)\nb-invariant elements of~\(\Rough(V)\).  Being \(M\)\nb-smooth means being invariant under~\(K_e^0\) for sufficiently large \(e\in\N\).

\begin{lem}
  \label{lem:invariance_average}
  If \(v\in \Rough(V)\) is invariant under \(U\) and~\(K_e^0\), then \(\idem{K_e^-}v = \idem{K_e}v\) belongs to \(V^{K_e}\subseteq V\subseteq \Rough(V)\).  The class of \(\idem{K_e^-}v\) in the \(\opp{U}\)\nb-coinvariant space of~\(V\) does not depend on the choice of~\(e\).
\end{lem}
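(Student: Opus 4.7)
The plan is to derive both parts of the statement from the Hecke-algebra identity
\[
\idem{K_e} = \idem{K_e^-}\,\idem{K_e^0}\,\idem{K_e^+},
\]
a standard consequence of the Iwahori-type factorization~\eqref{eq:Kn}: the multiplication map \(K_e^- \times K_e^0 \times K_e^+ \to K_e\) is a bijective homeomorphism, and the product of the normalized Haar measures on the three pro-\(p\) factors pushes forward to the normalized Haar measure on~\(K_e\) because \(K_e^0\) normalizes \(K_e^\pm\) and the remaining modular factors are trivial in the pro-\(p\) setting. Verifying this measure-theoretic factorization is the only non-formal ingredient and will be the main obstacle.

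Granting this identity, the first assertion is immediate: the \(U\)\nb-invariance of~\(v\) together with \(K_e^+ \subseteq U\) yields \(\idem{K_e^+}v = v\), and the \(K_e^0\)-invariance yields \(\idem{K_e^0}v = v\). Hence
\[
\idem{K_e}\,v = \idem{K_e^-}\,\idem{K_e^0}\,\idem{K_e^+}\,v = \idem{K_e^-}\,v,
\]
and \(\idem{K_e}v\) is manifestly \(K_e\)-invariant, so it lies in \(V^{K_e} \subseteq V \subseteq \Rough(V)\).

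For the independence of~\(e\), I would take \(e' \ge e\), so that \(K_{e'}^- \subseteq K_e^-\), and choose coset representatives \(\bar u_1,\dots,\bar u_n \in K_e^- \subseteq \opp{U}\) for the finite quotient \(K_e^-/K_{e'}^-\). A routine rearrangement of normalized characteristic functions then gives
\[
\idem{K_e^-}\,v = \frac{1}{n}\sum_{i=1}^n \bar u_i \cdot \bigl(\idem{K_{e'}^-}\,v\bigr),
\]
expressing \(\idem{K_e^-}v\) as an average of \(\opp{U}\)-translates of the vector \(\idem{K_{e'}^-}v \in V\) (which belongs to~\(V\) by the first part of the lemma applied at level~\(e'\), using that \(K_{e'}^0 \subseteq K_e^0\)). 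By the defining relations~\eqref{eq:def_coinvariant} of the \(\opp{U}\)-coinvariant space, each \(\bar u_i \cdot w\) is identified with~\(w\) in \(\Jacr_G^{\opp P}(V)\), whence \([\idem{K_e^-}v] = [\idem{K_{e'}^-}v]\). Since any two indices are dominated by their maximum, the image of \(\idem{K_e^-}v\) in \(\Jacr_G^{\opp P}(V)\) does not depend on~\(e\).
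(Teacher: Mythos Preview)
Your proof is correct and follows essentially the same approach as the paper: both use the factorisation \(\idem{K_e} = \idem{K_e^-}\idem{K_e^0}\idem{K_e^+}\) coming from~\eqref{eq:Kn} to identify \(\idem{K_e}v\) with \(\idem{K_e^-}v\), and both deduce independence of~\(e\) from the fact that averaging over \(K_e^- \subseteq \opp{U}\) does not change the \(\opp{U}\)-coinvariant class. The one point the paper makes explicit that you leave implicit is that \(\Rough(V)^{K_e} = V^{K_e}\) because~\(K_e\) is open; this is what justifies the passage from ``\(\idem{K_e}v\) is \(K_e\)-invariant in \(\Rough(V)\)'' to ``\(\idem{K_e}v \in V^{K_e}\)''.
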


\begin{proof}
  Since \(K_e^+\subseteq U\), we have \(v = \idem{K_e^+}v = \idem{K_e^0} \idem{K_e^+}v\).  Hence
  \[
  \idem{K_e^-}v = \idem{K_e^-} \idem{K_e^0} \idem{K_e^+} v = \idem{K_e}v
  \]
  by~\eqref{eq:Kn}.  Since~\(K_e\) is open, \(V^{K_e} = \Rough(V)^{K_e}\), so that \(\idem{K_e^-}v \in V\).  The class of \(\idem{K_e^-}v\) in the coinvariant space~\(V/\opp{U}\) does not depend on~\(e\) because \(K_e^-\subseteq\opp{U}\) for all~\(e\).
\end{proof}

It is straightforward to check that the natural map
\[
\Jact_G^P(V)\to \Jacr_G^{\opp{P}}(V),\qquad
v\mapsto \idem{K_e^-}v,
\]
defined by Lemma~\ref{lem:invariance_average} is \(M\)\nb-equivariant.  The modular factors on both sides agree because \(\delta_P^{-\nicefrac12} = \delta_{\opp{P}}^{\nicefrac12}\).  The following is our main result:

\begin{theorem}[Second Adjointness Theorem]
  \label{the:second_adjointness}
  Let~\(G\) be a reductive group over a non-Archimedean local field with residue field charactersitic~\(p\).  Let~\(P\) be a parabolic subgroup and let~\(\opp{P}\) be its opposite parabolic.  Let~\(\ring\) be a unital ring in which~\(p\) is invertible and let~\(V\) be a smooth representation of~\(G\) on an \(\ring\)\nb-module.  Then the natural map \(\Jact_G^P(V)\to \Jacr_G^{\opp{P}}(V)\) is invertible.
\end{theorem}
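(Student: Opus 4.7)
The strategy I would follow is to reduce the statement to a \emph{Stabilisation Theorem} that controls the iterated operators $\idem{K_{e'}} z^{\pm n} \idem{K_e}$ for a strongly positive element $z$ in the centre of~$M$. By Bruhat--Tits theory, I would first fix a tower $(K_e)_{e \in \N}$ of compact open pro-$p$-subgroups of~$G$ in good position with respect to $\{P,\opp{P}\}$ and choose $z \in Z(M)$ whose conjugation action strictly contracts $(K_e^+)$ into itself (say $z K_e^+ z^{-1} \subseteq K_{e+1}^+$) while dilating $(K_e^-)$ correspondingly. The idea is that translating by $z^n$ moves a vector towards larger and larger $U$-invariant subspaces, eventually producing a coherent $U$-invariant rough vector.

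To build the inverse $\Jacr_G^{\opp{P}}(V) \to \Jact_G^P(V)$, I would start from $w \in V^{K_e}$ representing a class in the target and assemble a rough vector $\tilde w \in \varprojlim V^{K_{e'}} = \Rough(V)$ whose level-$e'$ component is a normalised average of translates of $w$ of the form
\[
\tilde w_{e'} \defeq \delta_P(z)^{n/2}\, \idem{K_{e'}} z^{-n} w
\quad \text{for } n \gg 0,
\]
with the power of $\delta_P$ chosen to match the twist $\delta_P^{-\nicefrac12}$ on the $M$-action on $\Jact_G^P(V)$. The Stabilisation Theorem asserts precisely that these averages become independent of~$n$ for $n$ large enough depending on~$e'$, so that $\tilde w_{e'}$ is well-defined and the projective system $(\tilde w_{e'})$ is compatible. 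Since $z$ commutes with~$K_e^0$, the lift $\tilde w$ is $K_e^0$-smooth; and since $z^{-n}$-translation together with averaging over growing neighbourhoods exhausts $U$-invariance, $\tilde w$ lies in $\Rough(V)^U$, hence in $\Jact_G^P(V)$.

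Verification that this is two-sided inverse to the map $v\mapsto\idem{K_e^-}v$ would use the factorisation $\idem{K_e} = \idem{K_e^-}\idem{K_e^0}\idem{K_e^+}$ from~\eqref{eq:Kn} together with $K_e^- \subseteq \opp{U}$. For the round trip $[w] \mapsto \tilde w \mapsto [\idem{K_e^-}\tilde w_e]$, the identity $\idem{K_e^-}\idem{K_e} = \idem{K_e}$ reduces the problem to matching the $\opp{U}$-coinvariant class of the normalised average with~$[w]$, and the normalising factor $\delta_P(z)^{n/2}$ exactly cancels the $M$-action twist on $\Jacr_G^{\opp P}(V)$. For the round trip starting at $v \in \Jact_G^P(V)$, stabilisation forces any $U$-invariant $K_e^0$-smooth rough lift of $\idem{K_e^-}v$ to agree with $v$ at every level $V^{K_{e'}}$, establishing both surjectivity of the inverse and injectivity of the original map.

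The main obstacle is the Stabilisation Theorem itself: proving that the averages $\delta_P(z)^{n/2}\idem{K_{e'}} z^{-n} w$ really do stabilise for $n \gg 0$, in a form uniform across~$V$ and compatible across the tower $(K_{e'})$. Because the Bernstein decomposition underpinning the classical arguments of Bernstein and Bushnell is unavailable for modules over an arbitrary ring~$\ring$ in which only~$p$ need be invertible, the proof must proceed through a direct quantitative estimate in the Bruhat--Tits building, tracking how the support of convolutions $\idem{K_{e'}} * \delta_{z^{-n}} * \idem{K_e}$ inside~$G$ behaves and showing that its residual contribution transverse to $K_{e'}^-$ is absorbed after finitely many steps. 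This geometric stabilisation is where the serious work of the paper lives and is the step most likely to conceal subtle errors.
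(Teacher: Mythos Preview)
Your overall strategy matches the paper's: reduce Second Adjointness to a Stabilisation Theorem for a Hecke operator built from a strictly positive central element, then prove stabilisation by geometry in the building. The paper packages the reduction more cleanly than your hands-on construction: rather than assembling the inverse level by level, it identifies $\Jact_G^P(V)^{K\cap M}$ and $\Jacr_G^{\opp P}(V)^{K\cap M}$ with the projective and inductive limits of the single diagram $\cdots \to V^K \xrightarrow{\idem{K\lambda K}} V^K \to \cdots$ (Proposition~\ref{pro:Jacquet_as_limit}), so the natural map becomes the canonical $\varprojlim \to \varinjlim$, and stability of some power $\idem{K\lambda K}^n$ makes this an isomorphism (Proposition~\ref{pro:Jacquet_Stability_as_limit}). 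In that formulation no $\delta_P(z)^{n/2}$ factor appears: both sides carry the \emph{same} twist $\delta_P^{-\nicefrac12}$, so the map and its inverse involve only the untwisted $G$-action on~$V$. Your formula $\tilde w_{e'} = \delta_P(z)^{n/2}\idem{K_{e'}}z^{-n}w$ is also not quite the right shape: stabilisation does not say such expressions become eventually constant in~$n$, only that $\ker$ and $\im$ of $\idem{K\lambda K}^n$ stabilise; the inverse is then read off from the decomposition $V^K = \ker \oplus \im$ with $\idem{K\lambda K}$ bijective on $\im$ (Lemma~\ref{lem:stable_decomposition}).

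For the Stabilisation Theorem itself, the paper is more specific than a generic support estimate. It reduces to semisimple rank one by filtering $P_{y_0}\cap U \subsetneq \cdots \subsetneq P_{x_m}\cap U$ along the geodesic $[x_0,x_m]$ in an apartment, one wall-crossing (hence one reduced root~$\alpha$) at a time (Lemmas~\ref{lem:filtration} and~\ref{lem:rank_reduction}); the rank-one statement~\eqref{eq:HeckeLalpha} is then proved inside the \emph{finite} group $L_{\alpha,e,r} = (P_y\cap G_\alpha)/(\UC{y}^{(e)}\cap G_\alpha)$, the crux being the invertibility in $\Hecke(T_\alpha^{(0)}/T_\alpha^{(e)},\Z[\nicefrac1p])$ of an explicit central element~$S$ (Lemma~\ref{lem:S}). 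Your closing remark is prescient: the authors themselves flag an unrepaired error in Lemma~\ref{lem:rank_reduction} (the reordering step only works for $e=0$), so the reduction to rank one --- precisely the passage you singled out as most likely to hide subtle errors --- is where the argument is incomplete.
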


We will prove this theorem in Section~\ref{sec:stabilisation}.

\subsection{The Stabilisation Theorem}
\label{sec:stabilisation_formulation}

Let~\(\lambda\) be an element of the centre~\(Z(M)\) of~\(M\) that is strictly positive with respect to \((\opp{P},M)\), which means that
\begin{equation}
  \label{eq:lambdaK+}
  \bigcup_{n\in\N} \lambda^n (K \cap U) \lambda^{-n} = U,\qquad
  \bigcap_{n\in\N} \lambda^{-n} (K \cap U) \lambda^n = \{1\}.
\end{equation}
Then~\(\lambda^{-1}\) is strictly positive with respect to \((P,M)\), that is,
\begin{equation}
  \label{eq:lambdaK-}
  \bigcup_{n\in\N} \lambda^{-n} (K \cap \opp{U}) \lambda^n = \opp{U},\qquad
  \bigcap_{n\in\N} \lambda^n (K \cap \opp{U}) \lambda^{-n} = \{1\}.
\end{equation}
For example, if \(G=\Gl_n\), \(P\)~is the parabolic subgroup of all upper triangular matrices, and~\(M\) is the subgroup of diagonal matrices, then~\(\lambda\) is a diagonal matrix whose entries have strictly decreasing norms.

Given \(g\in G\), we abbreviate
\[
\idem{K g K} \defeq \idem{K} g \idem{K}.
\]
Up to a volume factor, which is invertible in \(\Z[\nicefrac1p]\), \(\idem{K g K}\) is the characteristic function of the double coset~\(KgK\).

\begin{theorem}[Stabilisation Theorem]
  \label{the:stabilisation_general}
  Let~\(V\) be a smooth representation of~\(G\) on an \(\ring\)\nb-module.  Then the sequence of subspaces
  \begin{equation}
    \label{eq:Jacquet_stability}
    \ker \bigl(\idem{K \lambda K}^n\colon V\to V\bigr),\qquad
    \im \bigl(\idem{K \lambda K}^n\colon V\to V\bigr)
  \end{equation}
  stabilises for sufficiently large \(n\in\N\), that is, these subspaces become independent of~\(n\) for sufficiently large~\(n\).
\end{theorem}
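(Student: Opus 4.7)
The plan is to rewrite the iterated operator $T^n$ with $T \defeq \idem{K\lambda K}$ so that its kernel and image are controlled by the filtration of $\opp{U}$ by the subgroups $H_n \defeq \lambda^{-n} K^- \lambda^n$, and then extract stability from an effective Casselman-type vanishing. First I would reduce to $T|_{V^K}$: since $T = \idem{K}\lambda\idem{K}$, one has $T v \in V^K$ for every $v \in V$, so $\ker T^n$ and $\im T^n$ in $V$ stabilise as soon as their analogues on $V^K$ do. Combining the good-position factorisation~\eqref{eq:Kn}, the fact that $\lambda$ commutes with $K^0$, and the Iwahori--Matsumoto identity $\idem{K\lambda K}^n = c_n\, \idem{K\lambda^n K}$ (with $c_n$ a product of pro-$p$-volumes, hence a unit in~$\ring$), I obtain for $v \in V^K$ the identity
\[
T^n v \;=\; c_n\, \idem{K^-}\lambda^n v \;=\; c_n\, \lambda^n\, \idem{H_n} v.
\]
Because $\lambda^n$ acts invertibly on~$V$, stability of the kernel and image chains for $T^n|_{V^K}$ is equivalent to the corresponding statements for the averaging operators $\idem{H_n}|_{V^K}$.

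Next I would translate this into a Jacquet-vanishing statement. By~\eqref{eq:lambdaK-} the $H_n$ form an increasing exhaustion of~$\opp{U}$ by compact open subgroups, and for a smooth representation~$V$ the kernel of $\idem{H}$ equals the span of $\{w - h w : h \in H,\, w \in V\}$. Hence $\bigcup_n \ker(\idem{H_n}|_{V^K})$ is the $K$\nb-invariant part of the Jacquet kernel of~$V$ for~$\opp{U}$, and stability of the kernel chain amounts to an effective Casselman-type statement: there exists~$N$, depending on~$V$ and~$K$, such that every $v \in V^K$ lying in this Jacquet kernel already satisfies $\idem{H_N} v = 0$. The decreasing image chain is controlled by the same type of bound applied to the dual setup, forcing $T$ to map $\im T^n|_{V^K}$ surjectively onto itself for $n \geq N$.

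I would then prove this Casselman-type lemma by induction on the depth of smoothness of~$v$, measured via the well-placed filtration $(K_e)$ described after Definition~\ref{def:well-placed}. For $v \in V^{K_e}$ in the Jacquet kernel one writes $v = \sum_i (1 - \bar u_i)\, w_i$ with each $\bar u_i$ in some compact open subgroup of~$\opp{U}$; using~\eqref{eq:lambdaK-} one pushes each $\bar u_i$ into $H_n$ for $n$ sufficiently large, and absorbs the resulting averaging at the price of moving the $w_i$ to a deeper level $V^{K_{e'}}$ of the filtration, closing the induction. Strict positivity of~$\lambda$ must guarantee that the cost of each step is bounded in a way that depends only on~$e$ and on the Bruhat--Tits data of~$G$, not on~$v$ itself.

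\textbf{Main obstacle.} The decisive difficulty is making the threshold $N = N(V, K)$ effective and uniform over arbitrary coefficient rings~$\ring$. Finite generation of~$V^{K_e}$ over~$\ring$ and any Bernstein-type spectral decomposition of $\Hecke(G,\ring)$ are both unavailable, so the proof must be purely geometric. The induction on depth rests on controlling how conjugation by $\lambda^n$ moves the filtration pieces $K_e^\pm$ through the Bruhat--Tits building of~$G$, and ultimately on reducing the problem to smaller Levi subgroups. This rank reduction is the heart of Section~\ref{sec:stabilisation}, and is precisely the step that the paper's correction note flags as problematic.
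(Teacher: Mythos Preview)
Your identity $T^n v=\lambda^n\idem{H_n}v$ for $v\in V^K$ is correct (it is essentially Proposition~\ref{pro:Jacquet_as_limit}), and so is the kernel reformulation $\ker T^n|_{V^K}=V^K\cap\ker\idem{H_n}$. (The parallel claim for images is not: $\im T^n|_{V^K}=\lambda^n\idem{H_n}V^K$, and because of the varying twist by~$\lambda^n$ this chain is not governed by the chain $\idem{H_n}V^K$.) The decisive gap, however, is the proposed ``Casselman-type lemma''. Writing $v=\sum_i(1-\bar u_i)w_i$ and pushing each~$\bar u_i$ into~$H_n$ shows only that each individual $v\in V^K\cap V(\opp{U})$ lies in $\ker\idem{H_n}$ for \emph{some}~$n$ depending on that particular expression---which is immediate from the definitions and carries no information. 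What is needed is a single~$N$ valid for all $v\in V^K$ simultaneously. Your induction on depth cannot produce it: the~$w_i$ have no a~priori bound on their level of smoothness, and ``moving the~$w_i$ to a deeper level~$V^{K_{e'}}$'' makes the inductive hypothesis harder rather than easier, so the induction is not well-founded. Casselman's original argument escapes via finite-dimensionality of~$V^K$ in the admissible case; without admissibility, over a general ring, there is nothing to substitute.

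The paper's route is not a refinement of this but a different strategy altogether. By Proposition~\ref{pro:Jacquet_Stability_as_limit_converse} it suffices to show that the one-sided ideals $\idem{K\lambda^nK}\Hecke(G)$ stabilise---a statement purely in~$\Hecke(G)$, with no representation present. This is reduced geometrically, by walking along a geodesic in an apartment and crossing one wall at a time (Lemmas \ref{lem:filtration} and~\ref{lem:rank_reduction}), to the rank-one condition~\eqref{eq:HeckeLalpha}, which is settled by an explicit computation in a finite subquotient group (Lemma~\ref{lem:S}): a certain central element~$S$ is shown to be invertible over~$\Z[\nicefrac1p]$. The uniform bound in~$V$ comes for free because the entire argument takes place before any representation is chosen. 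Your closing remark correctly locates rank reduction as the heart of the matter, but the depth induction you describe is not that reduction and provides no route to it.
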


William Casselman \cite{Casselman:Representations_padic_groups}*{Section 4} proved this statement for admissible complex representations, Marie-France Vign\'eras \cite{Vigneras:l-modulaires}*{II.3.6} for admissible representations on vector spaces over fields of characteristic different from~\(p\), and Joseph Bernstein~\cite{Bernstein:Second_adjointness} for all smooth complex representations~-- which is much more difficult than the admissible case.  We will establish the Stabilisation Theorem~\ref{the:stabilisation_general} in Section~\ref{sec:stabilisation}.

An \(\ring\)\nb-module homomorphism \(\Op\colon W\to W\) is called \emph{stable} if \(\ker \Op = \ker \Op^2\) and \(\im \Op=\im \Op^2\).  The sequence \(\ker \Op^n\) is increasing and the sequence \(\im \Op^n\) is decreasing, and~\(\Op^2\) is stable if~\(\Op\) is stable.  Hence the Stabilisation Theorem~\ref{the:stabilisation_general} holds if and only if there is \(n\in\N\) for which \(\idem{K\lambda K}^n\)~is stable.

\begin{lem}
  \label{lem:stable_decomposition}
  An \(\ring\)\nb-module homomorphism \(\Op\colon W\to W\) is stable if and only if~\(\Op\) restricts to an invertible map \(\im \Op\to \im \Op\), if and only if \(W= \ker \Op \oplus \im \Op\).
\end{lem}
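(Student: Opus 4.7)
The plan is to prove the cyclic chain of implications (a) $\Rightarrow$ (b) $\Rightarrow$ (c) $\Rightarrow$ (a), where (a) is stability ($\ker \Op = \ker \Op^2$ and $\im \Op = \im \Op^2$), (b) is invertibility of $\Op|_{\im \Op}$, and (c) is the direct sum decomposition $W = \ker \Op \oplus \im \Op$. This is a standard Fitting-type argument, and each implication is elementary — there is no real obstacle, only a bit of bookkeeping.

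For (a) $\Rightarrow$ (b): surjectivity of $\Op|_{\im \Op}$ follows immediately from $\im \Op^2 = \im \Op$, since $\Op(\im \Op) = \im \Op^2$. For injectivity, take $w = \Op(x) \in \im \Op$ with $\Op(w) = 0$; then $x \in \ker \Op^2 = \ker \Op$, so $w = \Op(x) = 0$.

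For (b) $\Rightarrow$ (c): given any $w \in W$, use (b) to find the unique $y \in \im \Op$ with $\Op(y) = \Op(w)$; then $w = (w - y) + y$ with $w - y \in \ker \Op$ and $y \in \im \Op$. For directness of the sum, any element of $\ker \Op \cap \im \Op$ lies in $\im \Op$ and is killed by $\Op$, hence vanishes by injectivity of $\Op|_{\im \Op}$.

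For (c) $\Rightarrow$ (a): given $w \in \ker \Op^2$, decompose $w = k + \Op(x)$ with $k \in \ker \Op$; then $\Op(w) = \Op^2(x)$, and I would show this vanishes by noting $\Op^2(x) = \Op^2(w) = 0$, so $\ker \Op^2 \subseteq \ker \Op$. Likewise, any $y = \Op(w) \in \im \Op$ can be written using the decomposition of $w$ as $y = \Op^2(x) \in \im \Op^2$, giving $\im \Op \subseteq \im \Op^2$. The reverse inclusions are automatic. This closes the loop and proves the lemma.
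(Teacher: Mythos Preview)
Your cyclic argument is essentially the same as the paper's (the paper organises it as (a) $\Leftrightarrow$ (b), notes that (c) $\Rightarrow$ (b) is clear, and then proves (a) $\Rightarrow$ (c)), but there is a slip in your (c) $\Rightarrow$ (a) step for the kernel. From $w = k + \Op(x)$ with $k \in \ker \Op$ you correctly obtain $\Op(w) = \Op^2(x)$, but the asserted equality $\Op^2(x) = \Op^2(w)$ is not valid: applying $\Op^2$ to the decomposition gives $\Op^2(w) = \Op^3(x)$, not $\Op^2(x)$, so this does not force $\Op^2(x)=0$. The repair is immediate and does not even require decomposing~$w$: if $w \in \ker \Op^2$ then $\Op(w) \in \ker \Op \cap \im \Op = \{0\}$ by the directness assumed in~(c), hence $w \in \ker \Op$. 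Your argument for $\im \Op \subseteq \im \Op^2$ is correct as written.
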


\begin{proof}
  The stability property \(\im \Op^2=\im \Op\) is equivalent to the surjectivity of the map \(\im \Op\to\im \Op\) induced by~\(\Op\).  We have \(\ker \Op\cap \im \Op = \Op(\ker \Op^2)\).  Thus the stability property \(\ker \Op^2=\ker \Op\) is equivalent to \(\ker \Op\cap \im \Op=\{0\}\), that is, the injectivity of \(\Op|_{\im \Op}\).  Thus~\(\Op\) is stable if and only if \(\Op|_{\im \Op} \colon \im \Op\to \im \Op\) is bijective.  Clearly, this follows if \(W=\ker \Op\oplus \im \Op\).

  Conversely, suppose that~\(\Op\) is stable.  Then \(\ker \Op^2=\ker \Op\) implies \(\ker \Op\cap \im \Op=\{0\}\).  If \(x\in W\), then \(\Op x\in \im \Op = \im \Op^2\), so that \(\Op x=\Op^2y\) for some \(y\in W\), that is, \(x-\Op y\in \ker \Op\).  Thus \(x\in \ker \Op +\im \Op\).
\end{proof}

We may reformulate the Stabilisation Theorem using \(\idem{K\lambda K}^n = \idem{K\lambda^n K}\):

\begin{lem}
  \label{lem:1}
  Let \(K \subseteq G\) be a compact open pro-\(p\)-group which is in good position with respect to \(\{P,\opp{P}\}\).  The following equalities hold in \(\Hecke(G)\):
  \[
  \idem{K} \idem{\lambda K \lambda^{-1}} \dotsm \idem{\lambda^m K \lambda^{-m}}
  = \idem{K \lambda K}^m \lambda^{-m}
  = \idem{K \lambda^m K} \lambda^{-m}
  = \idem{K} \idem{\lambda^m K \lambda^{-m}}.
  \]
\end{lem}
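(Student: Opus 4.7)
The plan is to establish the three equalities successively: the outermost two by direct algebraic manipulation, and the middle one by a combinatorial argument exploiting the good position of~\(K\).

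For the leftmost equality, I would telescope by iterating the basic conjugation relation \(\lambda \idem{K} = \idem{\lambda K \lambda^{-1}} \lambda\) (which holds because \(G\) is unimodular):
\[
\idem{K \lambda K}^m = (\idem{K}\lambda\idem{K})^m = \idem{K}\idem{\lambda K \lambda^{-1}}\dotsm\idem{\lambda^m K\lambda^{-m}}\cdot\lambda^m,
\]
then multiply on the right by \(\lambda^{-m}\). The rightmost equality is immediate from the definition \(\idem{K\lambda^m K} = \idem{K}\lambda^m\idem{K}\) together with \(\lambda^m \idem{K}\lambda^{-m} = \idem{\lambda^m K \lambda^{-m}}\).

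Combining the outer two equalities reduces the central equality to
\[
\idem{H_0}\idem{H_1}\dotsm\idem{H_m} = \idem{H_0}\idem{H_m}, \qquad H_i \defeq \lambda^i K \lambda^{-i}.
\]
To prove this I would write \(H_i^\pm \defeq \lambda^i K^\pm \lambda^{-i}\) and note that \(H_i^0 = K^0\) because \(\lambda \in Z(M)\). The good-position hypothesis supplies the Iwahori-type factorization \(\idem{H_i} = \idem{H_i^-}\idem{H_i^0}\idem{H_i^+}\). Strict positivity~\eqref{eq:lambdaK+} yields the nesting \(K^+ = H_0^+ \subseteq H_1^+ \subseteq \dotsb\), so that \(H_i^+ \subseteq H_{i+1}\) and trivially \(H_i^0 = K^0 \subseteq H_{i+1}\); both \(\idem{H_i^+}\) and \(\idem{H_i^0}\) are therefore absorbed on the left by \(\idem{H_{i+1}}\), giving the key step
\[
\idem{H_i}\idem{H_{i+1}} = \idem{H_i^-}\idem{H_{i+1}}.
\]
Iterating within the full product transforms \(\idem{H_0}\idem{H_1}\dotsm\idem{H_m}\) into \(\idem{H_0^-}\idem{H_1^-}\dotsm\idem{H_{m-1}^-}\idem{H_m}\), and the opposite nesting \(H_0^- \supseteq H_1^- \supseteq \dotsb\) from~\eqref{eq:lambdaK-} collapses the initial chain of \(\idem{H_i^-}\) to a single \(\idem{H_0^-}\). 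A parallel application of the same absorption directly to \(\idem{H_0}\idem{H_m}\) shows this also equals \(\idem{H_0^-}\idem{H_m}\), closing the loop.

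The main subtlety is the Iwahori factorization \(\idem{H_i} = \idem{H_i^-}\idem{H_i^0}\idem{H_i^+}\) of the normalized Haar idempotent as a convolution of its three partial pieces. This is exactly what Definition~\ref{def:well-placed} is designed to guarantee: the bijectivity of the product map on \(H_i^- \times H_i^0 \times H_i^+\) makes the three Haar measures multiply correctly under convolution, and the pro-\(p\) hypothesis keeps the individual \(\idem{H_i^\pm}\) meaningful in \(\Hecke(G,\ring)\). Without good position, none of the subsequent absorption steps would be available.
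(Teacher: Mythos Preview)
Your argument is correct and follows essentially the same route as the paper: both proofs dispatch the outer equalities as trivial and deduce the middle one from the Iwahori factorisation \(\idem{K}=\idem{K^-}\idem{K^0}\idem{K^+}\) together with the absorption \(\lambda(K\cap\opp{U})\lambda^{-1}\subseteq K\) and \(\mu^{-1}(K\cap U)\mu\subseteq K\) coming from strict positivity. The only cosmetic difference is packaging: the paper keeps the \(\lambda\)'s in place and proves the single multiplicativity relation \(\idem{K\lambda K}\,\idem{K\mu K}=\idem{K\lambda\mu K}\) (then inducts on \(\mu=\lambda^d\)), whereas you first conjugate everything to the groups \(H_i=\lambda^iK\lambda^{-i}\) and then collapse the product \(\idem{H_0}\cdots\idem{H_m}\); unwinding the conjugation shows these are the same computation.
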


\begin{proof}
  The first and third equalities are trivial.  We prove the second one.  If \(\mu \in Z(M)\) is another element which is strictly positive with respect to \((\opp{P},M)\), then \eqref{eq:lambdaK+}, \eqref{eq:lambdaK-} and~\eqref{eq:Kn} yield
  \begin{multline*}
    \idem{K \lambda K} \idem{K \mu K}
    = \idem{K} \lambda \idem{K} \mu \idem{K}
    = \idem{K} \lambda \idem{K \cap \opp{U}} \idem{K \cap M} \idem{K \cap U} \mu \idem{K}\\
    = \idem{K} \idem{\lambda (K \cap \opp{U}) \lambda^{-1}}
    \idem{\lambda (K \cap M) \lambda^{-1}} \lambda \mu \idem{\mu^{-1} (K \cap U) \mu} \idem{K}
    = \idem{K} \lambda \mu \idem{K}.
  \end{multline*}
  Applying this with \(\mu = \lambda^d\), induction on \(d\in\N\) yields \(\idem{K \lambda K}^m = \idem{K \lambda^m K}\).
\end{proof}

\subsection{Equivalence of the Second Adjointness Theorem and the Stabilisation Theorem}
\label{sec:adjointness_versus_stabilisation}

Our next goal is to establish that the Second Adjointness Theorem is equivalent to the Stabilisation Theorem.  This motivates us to prove the Stabilisation Theorem in Section~\ref{sec:stabilisation}.  More precisely, the logic is a bit more complicated.

The Second Adjointness Theorem follows if the Stabilisation Theorem holds for some cofinal sequence of subgroups in good position~-- this is what we are going to do in Section~\ref{sec:stabilisation}.  Conversely, the Second Adjointness Theorem implies the Stabilisation Theorem for \emph{all} subgroups in good position.

\begin{proposition}
  \label{pro:Jacquet_as_limit}
  Let \(K\subseteq G\) be a compact open subgroup that is in good position with respect to \(\{P,\opp{P}\}\) and let~\(\lambda\) be strictly positive with respect to \((\opp{P},M)\).  Let~\(V\) be a smooth representation of~\(G\).  There are natural isomorphisms
  \begin{align*}
    \Jacr_G^{\opp{P}}(V)^{K \cap M} &\cong \varinjlim \bigl(
    V^K \xrightarrow{\idem{K \lambda K}}
    V^K \xrightarrow{\idem{K \lambda K}}
    V^K \to \dotsb\bigr),\\
    \Jact_G^P(V)^{K \cap M} &\cong \varprojlim \bigl(\dotsb\to
    V^K \xrightarrow{\idem{K \lambda K}}
    V^K \xrightarrow{\idem{K \lambda K}}
    V^K\bigr),
  \end{align*}
  such that the natural map \(\Jact_G^P(V)\to \Jacr_G^{\opp{P}}(V)\) described in Lemma~\textup{\ref{lem:invariance_average}} restricts to the natural map from the projective to the inductive limit of the diagram
  \begin{equation}
    \label{eq:proj_injlim_diagram}
    \dotsb \to
    V^K \xrightarrow{\idem{K \lambda K}}
    V^K \xrightarrow{\idem{K \lambda K}}
    V^K \xrightarrow{\idem{K \lambda K}}
    V^K \xrightarrow{\idem{K \lambda K}}
    V^K \to \dotsb.
  \end{equation}
\end{proposition}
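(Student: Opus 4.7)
The plan is to construct explicit maps in both directions and verify they are mutually inverse.

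For the inductive limit description, I define $\phi_n\colon V^K \to \Jacr_G^{\opp{P}}(V)^{K\cap M}$ by sending $v$ to the class of $\lambda^{-n}v$ in $\Jacr_G^{\opp{P}}(V)$; this lies in $(K\cap M)$\nb-invariants because $\lambda^{-n}$ centralises $K\cap M$. Writing $[v]$ for the image of $v\in V$ in the coinvariants, the key identity $[\idem{K\lambda K}v] = [\lambda v]$ for $v\in V^K$ follows from the Iwahori factorisation $\idem K = \idem{K^-}\idem{K^0}\idem{K^+}$ of~\eqref{eq:Kn}, using $\idem{K^+}\lambda v = \lambda\idem{\lambda^{-1}K^+\lambda}v = \lambda v$ (by~\eqref{eq:lambdaK+} and $v\in V^K$), $\idem{K^0}\lambda v = \lambda v$, and the triviality of averaging over $K^-\subseteq \opp U$ on coinvariants. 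This yields $\phi_{n+1}\circ\idem{K\lambda K} = \phi_n$, so the $\phi_n$ assemble into $\phi\colon\varinjlim V^K\to\Jacr_G^{\opp{P}}(V)^{K\cap M}$.

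For injectivity of $\phi$, if $[\lambda^{-n}v]=0$, the standard smooth-coinvariants description yields a compact open $H\subseteq\opp U$ with $\idem H\lambda^{-n}v=0$, equivalently $\idem{\lambda^n H\lambda^{-n}}v=0$. By~\eqref{eq:lambdaK-}, $\lambda^{-m}K^-\lambda^m\supseteq\lambda^n H\lambda^{-n}$ for large $m$, so $\idem{\lambda^{-m}K^-\lambda^m}v=0$. The same Iwahori manipulation as above simplifies $\idem K\lambda^m v = \lambda^m\idem{\lambda^{-m}K^-\lambda^m}v$, which is zero, and then Lemma~\ref{lem:1} gives $\idem{K\lambda K}^m v = \idem{K\lambda^m K}v = 0$, so $v$ vanishes in the colimit. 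For surjectivity, given $[w]\in\Jacr_G^{\opp{P}}(V)^{K\cap M}$, replace $w$ by $\idem{K^0}w$ to arrange $K^0$\nb-invariance without changing its class; for $n$ large enough that $w$ is $\lambda^{-n}K^+\lambda^n$\nb-invariant (this subgroup shrinks to $\{1\}$ by~\eqref{eq:lambdaK+}), the element $v:=\idem K\lambda^n w\in V^K$ satisfies $\phi_n(v) = [\idem{\lambda^{-n}K\lambda^n}w] = [\idem{\lambda^{-n}K^-\lambda^n}w] = [w]$.

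For the projective limit description, $\Jact_G^P(V)^{K\cap M}$ is the space of $U$\nb- and $K^0$\nb-invariant elements of $\Rough(V)$; define $\psi_n(r):= \idem K\lambda^{-n}r$. Since $\lambda$ normalises $U$ and centralises $M$, $\lambda^{-n}r$ is still $U$\nb- and $K^0$\nb-invariant, so Lemma~\ref{lem:invariance_average} gives $\psi_n(r) = \idem{K^-}\lambda^{-n}r\in V^K$. The transition-compatibility
\[
\idem{K\lambda K}\psi_{n+1}(r) = \idem K\lambda\idem{K^-}\lambda^{-(n+1)}r = \idem K\idem{\lambda K^-\lambda^{-1}}\lambda^{-n}r = \idem K\lambda^{-n}r = \psi_n(r)
\]
uses $\lambda K^-\lambda^{-1}\subseteq K^-$ (equivalent to~\eqref{eq:lambdaK-}) and the factorisation $\idem K = \idem{K^+}\idem{K^0}\idem{K^-}$ to absorb $\idem{\lambda K^-\lambda^{-1}}$ into $\idem{K^-}$. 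Conversely, a compatible family $(w_n)_{n\geq 0}$ yields $r_n:=\lambda^n w_n\in V^{\lambda^n K\lambda^{-n}}$ satisfying $\idem{\lambda^n K\lambda^{-n}}r_{n+1} = r_n$; since $\lambda^n K^+\lambda^{-n}$ exhausts $U$ while $\lambda^n K^-\lambda^{-n}$ shrinks to $\{1\}$, this family uniquely extends to a $U$\nb- and $K^0$\nb-invariant element of $\Rough(V)$, giving the inverse to~$\psi$.

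The compatibility with Lemma~\ref{lem:invariance_average} is then immediate: that map sends $r\mapsto[\idem{K^-}r]$, which under our identifications reads $\phi_0(\psi_0(r)) = [\idem{K^-}r]$, precisely the canonical map from the projective to the inductive limit of~\eqref{eq:proj_injlim_diagram}. The main technical obstacle is the surjectivity of~$\psi$: reconstructing an element of $\Rough(V) = \varprojlim_H V^H$ from a compatible sequence $(w_n)$ in $V^K$ requires specifying a projection $r_H\in V^H$ for every compact open pro-$p$ subgroup $H$ and verifying consistency; the desired $U$\nb-invariance of $r$ is exactly what makes the data $(r_n)$ on the subgroups $\lambda^n K\lambda^{-n}$ sufficient to determine~$r$.
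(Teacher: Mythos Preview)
Your argument is correct in structure and follows the same underlying strategy as the paper: both identify \(\Jacr_G^{\opp{P}}(V)^{K\cap M}\) and \(\Jact_G^P(V)^{K\cap M}\) with limits of the system \(V^{\lambda^{\pm n}K\lambda^{\mp n}}\) and then conjugate by \(\lambda^{\pm n}\) to land in the constant diagram on \(V^K\). Your presentation is more explicitly computational (direct maps \(\phi_n,\psi_n\) with bare-hands verification of bijectivity), while the paper phrases the inductive-limit side as an exhaustion of \(\opp{U}\) by the compact subgroups \(\lambda^{-n}K^-\lambda^n\) and the projective-limit side via the roughening along an auxiliary neighbourhood basis \((K_e)\). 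The Iwahori-factorisation manipulations you use are the same as the paper's.

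There is, however, a genuine gap at the point you yourself flag: the surjectivity of \(\psi\). You assert that the compatible family \((r_n)\) with \(r_n\in V^{\lambda^nK\lambda^{-n}}\) ``uniquely extends to a \(U\)- and \(K^0\)-invariant element of \(\Rough(V)\)'', but the subgroups \(\lambda^nK\lambda^{-n}\) are not a neighbourhood basis of~\(1\) (they grow in the \(U\)-direction), so this is not automatic from the definition of \(\Rough(V)=\varprojlim_H V^H\). The paper closes this gap by bringing in the decreasing neighbourhood basis \((K_e)\) and proving the key inclusion
\[
\lambda^{m(e)}K\lambda^{-m(e)} \;\subseteq\; K_e^-(K\cap M)U \;=\; K_e\cdot (K\cap M)U
\]
for a suitable cofinal sequence \(m(e)\). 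This is what lets one \emph{define} \(\idem{K_e}r \defeq \idem{K_e}\,r_{m(e)}\) and check that the resulting element of \(\Rough(V)=\varprojlim_e V^{K_e}\) is \((K\cap M)U\)-invariant with \(\psi_n(r)=w_n\). Without this inclusion (or an equivalent device) your last paragraph remains an outline rather than a proof; adding it makes your argument complete and essentially identical to the paper's.
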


We write \(\varprojlim \Op\) and~\(\varinjlim \Op\) for the projective and inductive limits of the diagram
\begin{equation}
  \label{eq:constant_diagram}
  \dotsb \to W \xrightarrow{\Op} W \xrightarrow{\Op} W
  \xrightarrow{\Op} W \xrightarrow{\Op} W \to \dotsb.
\end{equation}

\begin{proof}
  First we consider the functor~\(\Jacr_G^{\opp{P}}\).  For any subgroup \(H \subseteq G\) we put
  \[
  V(H) \defeq \spann {}\{ v - h \cdot v \mid v\in V,\ h\in H \}.
  \]
  Equation~\eqref{eq:lambdaK-} provides \(n(e)\in\N\) such that the sequence of groups
  \[
  H_e^- \defeq \lambda^{-n(e)} K_e^- \lambda^{n(e)},\qquad e\in\N
  \]
  increases and has union~\(\opp{U}\).  Thus the \(\opp{U}\)\nb-coinvariant space is the inductive limit
  \[
  V \bigm/ V(\opp{U}) = \lim_{e\to\infty} V \bigm/ V(H_e^-)
  \]
  of the coinvariant spaces for~\(H_e^-\).  We may also assume \(n(0)=0\).

  Since the groups~\(H_e^-\) are compact, \(V \bigm/ V (H_e^-)\) is naturally isomorphic to \(V^{H_e^-}\).  Under this isomorphism, the canonical maps \(V \bigm/ V (H_{e-1}^-) \to V \bigm/ V(H_e^-)\) and \(V\to V \bigm/ V(H_e^-)\) correspond to integration over~\(H_e^-\).

  If~\(V\) is a smooth representation of~\(G\), then it is smooth as a representation of~\(P\).  Hence it is the union (and in particular inductive limit) of the invariant spaces~\(V^{K^0_e H_e^+}\), where \(H_e^+ \defeq \lambda^{-n(e)} K_e^+ \lambda^{n(e)}\).  Notice that
  \[
  K^0_e H_e^+ H_e^- = \lambda^{-n(e)} K^0_e K^+_e K_e^- \lambda^{n(e)} =
  \lambda^{-n(e)} K_e \lambda^{n(e)}
  \]
  is a subgroup.  Hence integration over~\(H_e^-\) maps the space of \(K^0_e H_e^+\)\nb-invariants into itself.  As a consequence, \(\Jacr_G^{\opp{P}}(V)\) is isomorphic to the inductive limit of the subspaces of \(K^0_e H_e^+ H_e^-\)-invariants for \(e\to\infty\), where the structure map is integration over~\(H_e^-\).  Since \(K^0_e H^+_e \subseteq K^0_{e-1} H^+_{e-1}\), this structure map is equal to integration over the whole group \(H^-_e K^0_e H^+_e\).  Thus
  \[
  \Jacr_G^{\opp{P}} (V) \cong \varinjlim V^{\lambda^{-n(e)} K_e \lambda^{n(e)}},
  \]
  where the maps are given by \(\lambda^{-n(e)}\idem{K_e}\lambda^{n(e)}\) on \(V^{\lambda^{n(e-1)}K_{e-1}\lambda^{n(e-1)}}\).  Similar computations yield
  \[
  \Jacr_G^{\opp{P}}(V)^{K \cap M}
  \cong \varinjlim_e V^{\lambda^{-n(e)} K \lambda^{n(e)}}
  \cong \varinjlim_n V^{\lambda^{-n}K \lambda^n}.
  \]
  Replacing~\(n(e)\) by~\(n\) does not change the colimit because the sequence~\((n(e))_{e\in\N}\) is cofinal in~\(\N\).  Since the groups~\(\lambda^{-n}K \lambda^n\) are all conjugate, their fixed-point subspaces are isomorphic via \(\lambda^{\pm n}\).  This yields the desired description of \(\Jacr_G^{\opp{P}}(V)^{K \cap M}\) as \(\varinjlim {}\idem{K \lambda K}\).

  Now we consider the functor~\(\Jact_G^P\).  Recall that \(\Jact_G^P(V)\) is the subspace of \(M\)\nb-smooth, \(U\)\nb-invariant elements in the roughening \(\Rough (V)\) of~\(V\).  We restrict attention to the subspace of \((K \cap M)\)-invariants.  Then \(M\)\nb-smoothness becomes automatic, so that \(\Jact_G^P(V)^{K \cap M}\) is the subspace of \((K \cap M) U\)\nb-invariants in \(\Rough(V)\).  As
  \[
  \Rough(V) \cong \varprojlim ( \dotsb\to V^{K_{e+1}} \xrightarrow{\idem{K_e}}
  V^{K_e} \to \dotsb \to V^{K_1} \xrightarrow{\idem{K_0}} V^{K_0} ),
  \]
  an element of \(\Rough(V)\) is a sequence of \(x_e \in V^{K_e}\) with \(\idem{K_e}\cdot x_{e+1} = x_e\).

  Equation~\eqref{eq:lambdaK-} yields an increasing sequence \(m(e) \in \N\) with \(m(0)=0\) and \(\lambda^{m(e)} (K \cap \opp{U}) \lambda^{-m(e)} \subseteq K_e^-\).  Then
  \begin{multline*}
    K_e (K \cap M) U
    = K_e^- (K \cap M) U
    \supseteq \lambda^{m(e)} (K \cap \opp{U}) \lambda^{-m(e)}
    (K \cap M) U
    \\\supseteq \lambda^{m(e)} (K \cap \opp{U}) (K \cap M)
    (K \cap U) \lambda^{-m(e)}
    = \lambda^{m(e)} K \lambda^{-m(e)}.
  \end{multline*}
  So if \(x\in\Rough(V)\) is \((K \cap M) U\)\nb-invariant, then~\(\idem{K_e}x\) is invariant under the subgroup \(\lambda^{m(e)} K \lambda^{-m(e)}\).  Conversely, if~\(\idem{K_e}x\) is invariant under \(\lambda^{m(e)} K \lambda^{-m(e)}\) for all sufficiently large \(e\in\N\), then~\(x\) is \((K \cap M) U\)\nb-invariant.  Thus
  \[
  \Jact_G^P(V)^{K \cap M}
  \cong \varprojlim_e V^{\lambda^{m(e)} K \lambda^{-m(e)}}
  \cong \varprojlim_m V^{\lambda^m K \lambda^m}.
  \]
  As for~\(\Jacr_G^{\opp{P}}\), the maps in this projective system are given by integration over the relevant subgroups, and replacing~\(m(e)\) by~\(m\) does not affect the projective limit because the sequence~\(\bigl(m(e)\bigr)_{e\in\N}\) is cofinal in~\(\N\).  As above, we apply the invertible elements~\(\lambda^{\pm m}\) to arrive at the desired isomorphism \(\Jact_G^P (V)^{K \cap M} \cong \varprojlim {}\idem{K \lambda K}\).

  The natural map \(\varprojlim {}\idem{K\lambda K}\to \varinjlim {}\idem{K\lambda K}\) maps an element~\((x_n)_{n\in\N}\) of the projective limit to the image of \(x_0 \in V^K\) in the inductive limit.  This is the same as the image of~\(x_k\) in the inductive limit for any \(k\in\N\).  Our normalisations \(m(0)=0=n(0)\) ensure that the resulting map \(\Jact_G^P(V)^{K\cap M}\to \Jacr_G^{\opp{P}}(V)^{K\cap M}\) is simply~\(\idem{K}\), as it should be.
\end{proof}

\begin{proposition}
  \label{pro:Jacquet_Stability_as_limit}
  If \(\Op^n\colon W\to W\) is stable for sufficiently large~\(n\), then the natural map \(\varprojlim \Op \to \varinjlim \Op\) is invertible.
\end{proposition}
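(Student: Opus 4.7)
The plan is to invoke Lemma~\ref{lem:stable_decomposition} and then compute each limit explicitly. First I would fix $n$ so large that $\Op^n$ is stable; stability propagates upward, so $\ker \Op^m = \ker \Op^n$ and $\im \Op^m = \im \Op^n$ for every $m \geq n$. I would then deduce that $\Op$ itself restricts to an invertible endomorphism of $\im \Op^n$: it is surjective because $\Op(\im \Op^n) = \im \Op^{n+1} = \im \Op^n$, and injective because $\ker \Op \cap \im \Op^n \subseteq \ker \Op^{n+1} \cap \im \Op^{n+1} = 0$ by Lemma~\ref{lem:stable_decomposition} applied to $\Op^{n+1}$.

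Next I would identify both limits with $\im \Op^n$. For the projective limit, any compatible sequence $(x_m)$ satisfies $x_m = \Op^k x_{m+k}$ for all $k \geq 0$, so each $x_m$ lies in $\bigcap_k \im \Op^k = \im \Op^n$; invertibility of $\Op|_{\im \Op^n}$ then lets any $x_0 \in \im \Op^n$ extend uniquely to such a sequence, giving $\varprojlim \Op \cong \im \Op^n$ via $(x_m) \mapsto x_0$. For the inductive limit, the canonical map $\phi_0 \colon W \to \varinjlim \Op$ has kernel $\bigcup_k \ker \Op^k = \ker \Op^n$, and is also surjective: an arbitrary class $\phi_m(y)$ equals $\phi_0(y')$ provided $\Op^{n+m} y' = \Op^n y$, which can be solved since $\Op^n y \in \im \Op^n = \im \Op^{n+m}$. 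Combined with the direct sum decomposition of Lemma~\ref{lem:stable_decomposition}, this yields $\varinjlim \Op \cong W / \ker \Op^n \cong \im \Op^n$.

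Finally, the natural map $\varprojlim \Op \to \varinjlim \Op$ sends a compatible sequence to the class of any one of its coordinates, so under the identifications above it becomes the inclusion $\im \Op^n \hookrightarrow W$ followed by the quotient $W \twoheadrightarrow W / \ker \Op^n$, which is precisely the isomorphism induced by $W = \ker \Op^n \oplus \im \Op^n$. I do not foresee any real obstacle here; the only point requiring mild care is to recognise that the surjectivity of $\phi_0$ and the uniqueness of preimages in $\varprojlim \Op$ both reduce to the same fact, namely that $\Op^k$ is invertible on $\im \Op^n$ for every $k \geq 0$.
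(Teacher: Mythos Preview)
Your proof is correct. Both you and the paper identify the two limits with the stable image and check that the canonical map becomes the identity there, so the underlying idea is the same. The packaging differs in one place worth noting: the paper first invokes cofinality of \(n\N\) in \(\N\) to replace \(\Op\) by \(\Op^n\) and thereby assume \(\Op\) itself is stable, then argues at the level of diagrams that the inclusion \(\im\Op\hookrightarrow W\) and the shift \(\Op\colon W\to\im\Op\) are mutually inverse maps of pro-/ind-systems, whence both limits coincide with \(\im\Op\). You instead skip the cofinality reduction and prove directly that \(\Op\) (not \(\Op^n\)) is invertible on \(\im\Op^n\), then compute each limit by an explicit element chase. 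Your route is slightly more hands-on and avoids the reindexing trick; the paper's is a bit slicker categorically. Neither gains anything essential over the other.
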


\begin{proof}
  Since \(n\cdot\N\) is cofinal in~\(\N\), we have \(\varprojlim \Op^n = \varprojlim \Op\) and \(\varinjlim \Op^n = \varinjlim \Op\).  Hence we may assume without loss of generality that~\(\Op\) itself is stable.  By Lemma~\ref{lem:stable_decomposition}, the restriction of~\(\Op\) to \(\im \Op\) is bijective.  The inductive and projective limits of the constant systems
  \[
  \dotsb\to W \xrightarrow{\Op}
  W \xrightarrow{\Op}
  W \xrightarrow{\Op}
  W \xrightarrow{\Op}
  W \to\dotsb
  \]
  and
  \[
  \dotsb\to \im \Op \xrightarrow{\Op}
  \im \Op \xrightarrow{\Op}
  \im \Op \xrightarrow{\Op}
  \im \Op \xrightarrow{\Op}
  \im \Op \to\dotsb
  \]
  are isomorphic because the inclusion \(\im \Op\to W\) and the map \(\Op\colon W \to \im \Op\) shifting the diagrams by~\(1\) are inverse to each other as maps of projective or inductive systems.  Since \(\Op|_{\im \Op}\) is invertible by Lemma~\ref{lem:stable_decomposition}, \(\varprojlim \Op \cong \im \Op\) and \(\varinjlim \Op \cong \im \Op\), and the canonical map between them is the identity map on \(\im \Op.\)
\end{proof}

\begin{proposition}
  \label{pro:stability_to_adjointness}
  If \(\idem{K\lambda K}^n\) is stable for sufficiently large~\(n\), then the natural map \(\Jact_G^P(V)\to \Jacr_G^{\opp{P}}(V)\) restricts to an isomorphism \(\Jact_G^P(V)^{K\cap M}\cong \Jacr_G^{\opp{P}}(V)^{K\cap M}\).
\end{proposition}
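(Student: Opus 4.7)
The proof is essentially a direct concatenation of the two preceding propositions, so my plan is to make precisely that reduction. First, I would apply Proposition~\ref{pro:Jacquet_as_limit} with $W \defeq V^K$ and $\Op \defeq \idem{K\lambda K}\colon V^K \to V^K$. That proposition identifies
\[
\Jacr_G^{\opp{P}}(V)^{K \cap M} \cong \varinjlim \Op, \qquad
\Jact_G^P(V)^{K \cap M} \cong \varprojlim \Op,
\]
and, crucially, asserts that the natural map $\Jact_G^P(V) \to \Jacr_G^{\opp{P}}(V)$ of Lemma~\ref{lem:invariance_average} restricts on the $(K \cap M)$-invariants to the canonical map from the projective to the inductive limit of the two-sided constant diagram~\eqref{eq:constant_diagram} defined by $\Op$.

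Second, I would invoke the hypothesis that $\idem{K\lambda K}^n = \Op^n$ is stable for some $n \in \N$, so that Proposition~\ref{pro:Jacquet_Stability_as_limit} applies to~$\Op$. That proposition says precisely that the canonical map $\varprojlim \Op \to \varinjlim \Op$ is invertible under this stability assumption. Combining the two statements yields the claimed isomorphism $\Jact_G^P(V)^{K \cap M} \cong \Jacr_G^{\opp{P}}(V)^{K \cap M}$.

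There is no real obstacle here: all the work has been done in Proposition~\ref{pro:Jacquet_as_limit} (translating the functorial statement into a question about a single linear map on $V^K$) and in Proposition~\ref{pro:Jacquet_Stability_as_limit} (the elementary lemma that stability forces the $\varprojlim$-to-$\varinjlim$ comparison map to be an isomorphism). The only point worth remarking on is that the identification of the natural map $\Jact_G^P \to \Jacr_G^{\opp{P}}$ with the limit comparison map given at the end of Proposition~\ref{pro:Jacquet_as_limit} depends on the normalisation $m(0) = 0 = n(0)$, so that the composite on $V^K$ is just $\idem{K}$; this is what guarantees that the isomorphism produced by Proposition~\ref{pro:Jacquet_Stability_as_limit} is the one induced by Lemma~\ref{lem:invariance_average} rather than some other abstract isomorphism between the two limits.
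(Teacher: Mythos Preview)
Your proposal is correct and matches the paper's own proof, which simply says to combine Propositions~\ref{pro:Jacquet_as_limit} and~\ref{pro:Jacquet_Stability_as_limit}. Your added remark about the normalisation \(m(0)=0=n(0)\) ensuring that the comparison map really is the one from Lemma~\ref{lem:invariance_average} is already contained in the statement of Proposition~\ref{pro:Jacquet_as_limit}, so no further justification is needed.
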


\begin{proof}
  Combine Propositions \ref{pro:Jacquet_as_limit} and~\ref{pro:Jacquet_Stability_as_limit}.
\end{proof}

Thus the Second Adjointness Theorem~\ref{the:second_adjointness} follows if the Stabilisation Theorem holds for~\(\idem{K_e\lambda K_e}\) for some sequence of subgroups~\((K_e)\) with \(\bigcap K_e=\{1\}\).

For a general map \(\Op\colon W\to W\), stability of~\(\Op^n\) for sufficiently large~\(n\) is stronger than invertibility of the natural map \(\varprojlim \Op \to \varinjlim \Op\).  Nevertheless, we can deduce the Stabilisation Theorem from the Second Adjointness Theorem.

\begin{proposition}
  \label{pro:Jacquet_Stability_as_limit_converse}
  Let \(\Op_\lefts\) and~\(\Op_\rights\) denote the operators of left and right multiplication by \(\idem{K\lambda K}\) on \(\Hecke(G)\).  If the canonical maps \(\varprojlim \Op_\lefts \to \varinjlim \Op_\lefts\) and \(\varprojlim \Op_\rights \to \varinjlim \Op_\rights\) are invertible, then
  \begin{equation}
    \label{eq:ideal_stability}
    \idem{K\lambda K}^n\Hecke(G) = \idem{K\lambda K}^{n+1}\Hecke(G)
    \quad\text{and}\quad
    \Hecke(G)\idem{K\lambda K}^n = \Hecke(G)\idem{K\lambda K}^{n+1}
  \end{equation}
  for sufficiently large \(n\in\N\).  And~\eqref{eq:ideal_stability} implies that~\(\idem{K\lambda K}^n\) is stable on any smooth representation of~\(G\) on an \(\ring\)\nb-module.
\end{proposition}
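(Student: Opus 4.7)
The plan is to establish the two implications of the proposition separately.  The second---that \eqref{eq:ideal_stability} entails stability of \(\idem{K\lambda K}^n\) on every smooth representation---is essentially formal.  Abbreviating \(\alpha \defeq \idem{K\lambda K}\), suppose that \(\alpha^n = \alpha^{n+1}g = h\alpha^{n+1}\) for some \(g,h \in \Hecke(G)\).  Then for every smooth representation~\(V\) and every \(v \in V\), the identity \(\alpha^n v = \alpha^{n+1}(gv)\) yields \(\im \alpha^n|_V \subseteq \im \alpha^{n+1}|_V\), while \(\alpha^n v = h\alpha^{n+1}v\) gives \(\ker \alpha^{n+1}|_V \subseteq \ker \alpha^n|_V\).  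Together with the trivial reverse inclusions, this exactly says that \(\alpha^n\) is stable on~\(V\).

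The first implication needs more care.  Unwinding the description in Proposition~\ref{pro:Jacquet_as_limit}, the canonical map \(\Phi_\lefts\colon \varprojlim \Op_\lefts \to \varinjlim \Op_\lefts\) sends a projective sequence \((f_n)_{n \in \N}\) (satisfying \(\alpha f_{n+1} = f_n\)) to \(\iota_0(f_0) \in \varinjlim \Op_\lefts\), where \(\iota_0\colon \Hecke(G) \to \varinjlim \Op_\lefts\) denotes the canonical map from the level-\(0\) term of the diagram.  In particular, the image of \(\Phi_\lefts\) sits inside the image of~\(\iota_0\), so invertibility of \(\Phi_\lefts\) forces \(\iota_0\) itself to be surjective onto \(\varinjlim \Op_\lefts\).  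Chasing the equivalence relation defining the inductive limit, this surjectivity is equivalent to the following ring-theoretic divisibility: for every \(f \in \Hecke(G)\) and every \(n \geq 0\), there exist \(k \geq 0\) and \(g \in \Hecke(G)\) with \(\alpha^k f = \alpha^{k+n} g\).

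Specializing to \(f = \idem{K}\) and \(n = 2\), and noting that \(\alpha^k\idem{K} = \alpha^k\) because \(\alpha = \idem{K}\lambda\idem{K}\) is already right \(K\)-invariant, one obtains \(k \geq 0\) and \(g \in \Hecke(G)\) with \(\alpha^k = \alpha^{k+2}g\).  The trivial chain \(\alpha^{k+2}\Hecke(G) \subseteq \alpha^{k+1}\Hecke(G) \subseteq \alpha^k\Hecke(G)\) then collapses in view of \(\alpha^k \in \alpha^{k+2}\Hecke(G)\), and a short induction gives \(\alpha^m\Hecke(G) = \alpha^{m+1}\Hecke(G)\) for every \(m \geq k\).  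The strictly symmetric argument with \(\Op_\rights\) in place of \(\Op_\lefts\) yields \(\Hecke(G)\alpha^m = \Hecke(G)\alpha^{m+1}\) for all sufficiently large~\(m\), completing~\eqref{eq:ideal_stability}.

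The main obstacle is essentially the bookkeeping of the canonical map: one must correctly identify \(\Phi_\lefts\) as \((f_n) \mapsto \iota_0(f_0)\) and translate its surjectivity into the concrete divisibility \(\alpha^k f = \alpha^{k+n} g\).  Once this is in hand, the clever choice \(f = \idem{K}\), \(n = 2\) reduces everything to elementary manipulations of one-sided ideals in \(\Hecke(G)\), and no deeper structural information about~\(G\) or the element~\(\lambda\) is needed.
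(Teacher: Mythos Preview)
Your proof is correct and follows essentially the same route as the paper's: both arguments use only the surjectivity of \(\Phi_\lefts\) (respectively \(\Phi_\rights\)) applied to the class of \(\idem{K}\) to produce a factorisation \(\alpha^n=\alpha^{n+1}g\) (respectively \(\alpha^n=h\alpha^{n+1}\)), and then deduce stability on any smooth representation from these identities exactly as you do. The only cosmetic differences are that the paper lifts \([\idem{K}]\) directly to a projective sequence \((x_{-n})\) and reads off \(x_{-1}\) as your \(g\), whereas you phrase the same step via surjectivity of \(\iota_0\) and take \(n=2\) rather than \(n=1\); neither affects the logic.
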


\begin{proof}
  The class of \(\idem{K}\in \Hecke(G)^K\) in \(\varinjlim \Op_\lefts\) is the image of some element~\((x_{-n})_{n\in\N}\) of \(\varprojlim \Op_\lefts\).  Thus the \(\Op^n\)\nb-images of \(x_0\) and~\(\idem{K}\) agree for sufficiently large~\(n\), that is
  \[
  \idem{K\lambda K}^n = \Op_\lefts^n\idem{K} = \Op_\lefts^n x_0 = \Op_\lefts^{n+1}x_{-1}
  = \idem{K\lambda K}^{n+1} x_{-1}.
  \]
  The existence of such~\(x_{-1}\) is equivalent to \(\idem{K\lambda K}^n\Hecke(G) =\idem{K\lambda K}^{n+1}\Hecke(G)\).  A similar argument for~\(\Op_\rights\) yields \(\idem{K\lambda K}^n = y_{-1} \idem{K\lambda K}^{n+1}\) for some \(y_{-1}\in \Hecke(G)^K\) and \(\Hecke(G)\idem{K\lambda K}^n = \Hecke(G)\idem{K\lambda K}^{n+1}\) if the map \(\varprojlim \Op_\rights \to \varinjlim \Op_\rights\) is invertible.

  We write \(\pi\idem{K\lambda K}\) for the action of~\(\idem{K\lambda K}\) on a smooth \(G\)\nb-representation.  The inclusion \(\im \pi \idem{K \lambda K}^{n+1} \subseteq \im \pi\idem{K \lambda K}^n\) is trivial.  If \(\idem{K\lambda K}^n = \idem{K\lambda K}^{n+1} x_{-1}\) for some \(x_{-1}\in \ring\), then
  \[
  \im \pi\idem{K \lambda K}^{n+1}
  \supseteq \im\bigl( \pi\idem{K \lambda K}^{n+1}\cdot x_{-1} \bigr)
  = \im \pi\idem{K \lambda K}^n.
  \]
  Similarly, if \(\idem{K\lambda K}^n = y_{-1} \idem{K\lambda K}^{n+1}\) for some \(y_{-1}\in \Hecke(G)^K\), then
  \[
  \ker \pi\idem{K \lambda K}^{n+1}
  \subseteq \ker \bigl(y_{-1}\cdot \idem{K \lambda K}^{n+1}\bigr)
  = \ker \pi\idem{K \lambda K}^n,
  \]
  and \(\ker \pi\idem{K \lambda K}^{n+1} \supseteq \ker \pi\idem{K \lambda K}^n\) is trivial.  Thus~\eqref{eq:ideal_stability} implies that \(\pi\idem{K \lambda K}^n\) is stable.
\end{proof}

\begin{proposition}
  \label{pro:adjointness_to_stability}
  Let~\(K\) be a compact open subgroup in good position with respect to \(\{P,\opp{P}\}\) and let~\(\lambda\) be strictly positive with respect to \((\opp{P},M)\).  Let~\(G\) act on \(\Hecke(G)\) by the regular representation.  Assume that the natural maps
  \begin{align*}
    \Jact_G^P(\Hecke(G))^{K\cap M}&\to
    \Jacr_G^{\opp{P}}(\Hecke(G))^{K\cap M}\\
    \Jact_G^{\opp{P}}(\Hecke(G))^{K\cap M}&\to
    \Jacr_G^P(\Hecke(G))^{K\cap M}
  \end{align*}
  are invertible.  Then \(\idem{K\lambda K}^n\) is stable for sufficiently large~\(n\), on any smooth representation of~\(G\) on an \(\ring\)\nb-module.
\end{proposition}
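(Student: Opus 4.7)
The plan is to combine Proposition~\ref{pro:Jacquet_as_limit} with the anti\nb-involution \(f \mapsto \check{f}\) on~\(\Hecke(G)\), where \(\check{f}(g) = f(g^{-1})\), and then feed the resulting hypotheses directly into Proposition~\ref{pro:Jacquet_Stability_as_limit_converse}. The test representation throughout is~\(\Hecke(G)\) itself with the left regular action, which is plainly smooth since every \(f \in \Hecke(G)\) is left invariant under some compact open subgroup.

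First I would apply Proposition~\ref{pro:Jacquet_as_limit} to \(V = \Hecke(G)\) viewed via the left regular representation. Then \(V^K = \idem{K}\Hecke(G)\) and the action of~\(\idem{K\lambda K}\) deduced from the representation is simply left convolution, so the first hypothesis becomes invertibility of the canonical map \(\varprojlim \Op_\lefts \to \varinjlim \Op_\lefts\), where \(\Op_\lefts\) is left multiplication by~\(\idem{K\lambda K}\) on~\(\Hecke(G)^K\). Because \(\idem{K\lambda K} = \idem{K}\lambda\idem{K}\), the operator~\(\Op_\lefts\) already takes~\(\Hecke(G)\) into \(\idem{K}\Hecke(G) = \Hecke(G)^K\); hence these limits coincide with the limits of~\(\Op_\lefts\) computed on all of~\(\Hecke(G)\), which is exactly the setup of Proposition~\ref{pro:Jacquet_Stability_as_limit_converse}.

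Next I would apply Proposition~\ref{pro:Jacquet_as_limit} with the roles of~\(P\) and~\(\opp{P}\) swapped; the strictly positive element then has to be positive with respect to \((P, M)\), a role filled by~\(\lambda^{-1}\). The second hypothesis thus translates into invertibility of \(\varprojlim \to \varinjlim\) for left multiplication by~\(\idem{K\lambda^{-1}K}\) on~\(\Hecke(G)^K\). To convert this into a right\nb-multiplication statement, I would invoke the \(\ring\)\nb-linear anti\nb-automorphism \(f \mapsto \check{f}\) of~\(\Hecke(G)\): it fixes~\(\idem{K}\) (since \(K = K^{-1}\)) and sends~\(\idem{K\lambda K}\) to~\(\idem{K\lambda^{-1}K}\), and conjugating by it converts left multiplication by~\(\idem{K\lambda^{-1}K}\) into right multiplication by~\(\idem{K\lambda K}\). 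Hence the canonical map \(\varprojlim \Op_\rights \to \varinjlim \Op_\rights\) is invertible as well.

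Both hypotheses of Proposition~\ref{pro:Jacquet_Stability_as_limit_converse} now hold, and its conclusion delivers stability of~\(\idem{K\lambda K}^n\) for sufficiently large~\(n\) on every smooth representation of~\(G\). The argument is essentially bookkeeping. The only points requiring care are that the isomorphisms of Proposition~\ref{pro:Jacquet_as_limit} really do intertwine the natural transformation \(\Jact \to \Jacr\) with the canonical comparison \(\varprojlim \to \varinjlim\), and that swapping~\(P\) with~\(\opp{P}\) in Proposition~\ref{pro:Jacquet_as_limit} forces the substitution \(\lambda \rightsquigarrow \lambda^{-1}\); no genuine obstacle is to be expected, the deep content having already been expended in the Stabilisation Theorem itself.
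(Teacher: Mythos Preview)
Your proposal is correct and follows essentially the same route as the paper. The only cosmetic difference is in how you pass from the second hypothesis to the invertibility of \(\varprojlim \Op_\rights \to \varinjlim \Op_\rights\): you apply Proposition~\ref{pro:Jacquet_as_limit} to the \emph{left} regular representation with \((\opp{P},\lambda^{-1})\) and then transport the resulting statement about left multiplication by \(\idem{K\lambda^{-1}K}\) to right multiplication by \(\idem{K\lambda K}\) via the anti-involution \(f\mapsto\check f\); the paper instead applies Proposition~\ref{pro:Jacquet_as_limit} directly to the \emph{right} regular representation, observing that \(\pi\idem{K\lambda^{-1}K}\) there is exactly \(\Op_\rights\). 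Since the anti-involution is precisely the intertwiner between the left and right regular representations, the two arguments are the same computation viewed from opposite sides.
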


\begin{proof}
  Using the notation \(\Op_\lefts\) and~\(\Op_\rights\) above, Proposition~\ref{pro:Jacquet_as_limit} identifies
  \begin{alignat*}{2}
    \varprojlim \Op_\lefts &\cong \Jact_G^P(\Hecke(G))^{K\cap M},&\qquad
    \varinjlim \Op_\lefts &\cong \Jacr_G^{\opp{P}}(\Hecke(G))^{K\cap M},\\
    \varprojlim \Op_\rights &\cong \Jact_G^{\opp{P}}(\Hecke(G))^{K\cap M},&\qquad
    \varinjlim \Op_\rights &\cong \Jacr_G^P(\Hecke(G))^{K\cap M}.
  \end{alignat*}
  Notice that \(\Op_\rights= \pi\idem{K\lambda^{-1}K}\) if~\(\pi\) is the right regular representation of~\(G\); if~\(\lambda\) is strictly positive with respect to \((\opp{P},M)\), then~\(\lambda^{-1}\) is strictly positive with respect to \((P,M)\).  Hence our assumptions imply the hypotheses of Proposition~\ref{pro:Jacquet_Stability_as_limit_converse}, which then yields the desired conclusion.
\end{proof}

Given a smooth \(G\)\nb-representation \((\pi,V)\), let \(p_U\colon V \to V / V(U) = \Jacr_G^P (V)\) be the quotient map and \(p_U^K\colon V^K \to \Jacr_G^P (V)^{K \cap M}\) its restriction to~\(V^K\).

\begin{theorem}
  \label{thm:7}
  Let~\(\mu\) be an element of the centre of~\(M\) that is strictly positive with respect to \((P,M)\).  Assume that \(\pi\idem{K\mu^nK}\) is stable.  For sufficiently large compact open subgroups \(C \subset U\) and \(\bar C \subset \bar U\):
  \begin{gather}
    \label{eq:8}
    \ker p_U^K = V^K \cap V(U) = V^K \cap \ker \pi (\idem{C}) =
    V^K \cap \ker \pi ( \idem{K \mu^n K}),\\
    \label{eq:9}
    V_*^K \defeq \pi (\idem{K \mu^n K}) V = \pi (\idem{K} \idem{\bar C}) V.
  \end{gather}
  Moreover, \(V^K = \ker p_U^K \oplus V_*^K\) and \(p_U\colon V_*^K \to \Jacr_G^P (V)^{K \cap M}\) is a bijection.
\end{theorem}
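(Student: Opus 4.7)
The plan is to combine the stability hypothesis with Proposition~\ref{pro:Jacquet_as_limit} to identify both summands of the direct sum decomposition with objects attached to $\Jacr_G^P(V)^{K \cap M}$. Set $T \defeq \pi(\idem{K \mu^n K})$. First I would compute $T$ explicitly on $V^K$ using the factorisation $\idem{K} = \text{vol}\cdot\idem{K^+}\idem{K^0}\idem{K^-}$ and the centrality of $\mu$ in $M$. For $u \in V^K$, the vector $\pi(\mu^n) u$ is already invariant under $K^0$ and under $\mu^n K^- \mu^{-n} \supseteq K^-$, so only the $K^+$\nobreakdash-averaging survives:
\[
  T u = \text{vol}\cdot\pi(\idem{K^+})\pi(\mu^n) u = \text{vol}\cdot\pi(\mu^n)\pi(\idem{C}) u,\qquad C \defeq \mu^{-n} K^+ \mu^n.
\]
Since $\pi(\mu^n)$ and the volume factor are invertible in $\ring$, this gives $V^K \cap \ker T = V^K \cap \ker \pi(\idem{C})$, and the latter is contained in $V^K \cap V(U) = \ker p_U^K$ because $V(U)$ is the union of the kernels of the averaging operators over compact open subgroups of~$U$.

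Next I would invoke the abstract framework. Stability together with Lemma~\ref{lem:stable_decomposition} gives $V = \ker T \oplus \im T$; because $\pi(\idem K)$ commutes with~$T$ and acts as the identity on $\im T$, this restricts to $V^K = (V^K \cap \ker T) \oplus V_*^K$ with $T|_{V_*^K}$ invertible. By symmetry of the hypothesis $K$ in good position with respect to $\{P, \opp P\}$, Proposition~\ref{pro:Jacquet_as_limit} realises $\Jacr_G^P(V)^{K \cap M}$ as $\varinjlim \pi(\idem{K \mu K})$ with stage-zero inclusion given by $p_U^K$, and Proposition~\ref{pro:Jacquet_Stability_as_limit} identifies this colimit with $V_*^K = \im T$, turning the stage-zero inclusion into the operator $v \mapsto T v$. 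Hence $\ker p_U^K = V^K \cap \ker T$, which closes the chain of equalities in~\eqref{eq:8}, and $p_U^K$ restricted to $V_*^K$ is a bijection onto $\Jacr_G^P(V)^{K \cap M}$.

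For~\eqref{eq:9} I would take $\bar C \defeq \mu^n K^- \mu^{-n}$. The inclusion $V_*^K \subseteq \pi(\idem{K}\idem{\bar C})V$ is immediate from $T v = \pi(\idem K)\bigl(\pi(\mu^n)\pi(\idem K) v\bigr)$, since $\pi(\mu^n)\pi(\idem K) v$ lies in $V^{\mu^n K \mu^{-n}} \subseteq V^{\bar C}$. For the reverse inclusion, given $v' \in V^{\bar C}$, set $y \defeq \pi(\mu^{-n}) v' \in V^{K^-}$. Because $K = K^-(K^0 K^+)$ and $y$ is $K^-$\nobreakdash-invariant, one has $\pi(\idem K) y = \pi(\idem{K^0 K^+}) y$, so $y - \pi(\idem K) y \in V(K^0 K^+)$. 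Conjugation by $\mu^n$ sends this into $V\bigl(K^0 \cdot \mu^n K^+ \mu^{-n}\bigr) \subseteq V(K^0 K^+) \subseteq V(K)$, which is killed by $\pi(\idem K)$. Hence $\pi(\idem K) v' = \pi(\idem K)\pi(\mu^n) y = \pi(\idem K)\pi(\mu^n)\pi(\idem K) y = T\bigl(\pi(\idem K) y\bigr) \in V_*^K$, completing~\eqref{eq:9}.

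The principal technical obstacle is this final calculation: recognising that the residue $y - \pi(\idem K) y$ lives in $V(K^0 K^+)$ and is absorbed into $V(K)$ after conjugation by~$\mu^n$. This relies on $K$ being in good position, which supplies the Iwahori-type factorisation of $\idem K$ and the normalisation of $K^\pm$ by $K^0$, and on $\mu$ being central in $M$ and strictly positive with respect to $(P, M)$, ensuring $\mu^n K^+ \mu^{-n} \subseteq K^+$. The remaining assertions are formal consequences of stability and of the bimodule formalism of Section~\ref{sec:Jacquet}.
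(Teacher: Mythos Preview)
Your argument is correct and reaches the same conclusions, but the route differs from the paper's in a way worth noting.

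The paper never unpacks the Iwahori factorisation of~\(\idem{K}\). Instead it exploits stability at \emph{all} levels \(N\ge n\): for an arbitrary compact open \(\bar C\subseteq\opp U\) one chooses~\(N\) with \(\bar C\subseteq \mu^N K^-\mu^{-N}\), so that \(\idem{\bar C}\idem{\mu^N K\mu^{-N}}=\idem{\mu^N K\mu^{-N}}\) and hence \(\im\pi(\idem K\idem{\bar C})\supseteq\im\pi\idem{K\mu^N K}=\im\pi\idem{K\mu^n K}\); the opposite inclusion for \(\bar C\supseteq\mu^nK^-\mu^{-n}\) comes from \(\idem K\idem{\mu^nK^-\mu^{-n}}=\idem K\idem{\mu^nK\mu^{-n}}\). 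The kernel side is handled symmetrically, and \(\ker p_U^K\) is read off from Proposition~\ref{pro:Jacquet_as_limit} as \(V^K\cap\bigcup_N\ker\pi\idem{K\mu^NK}\), which stability collapses to a single term. This sandwich argument immediately gives~\eqref{eq:8} and~\eqref{eq:9} for \emph{all} sufficiently large \(C,\bar C\).

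Your approach is more computational: you factor \(\idem K=\idem{K^+}\idem{K^0}\idem{K^-}\) and push \(\mu^n\) through to exhibit \(T|_{V^K}\) as \(\pi(\mu^n)\pi(\idem C)\) with \(C=\mu^{-n}K^+\mu^n\), and you prove the reverse inclusion in~\eqref{eq:9} by an explicit absorption argument in \(V(K^0K^+)\). This is clean and self-contained, but as written it establishes~\eqref{eq:8} and~\eqref{eq:9} only for the specific choices \(C=\mu^{-n}K^+\mu^n\) and \(\bar C=\mu^nK^-\mu^{-n}\). The extension to all larger \(C',\bar C'\) is one line (\(\ker\pi(\idem C)\subseteq\ker\pi(\idem{C'})\subseteq V(U)\), and \(V^{\bar C'}\subseteq V^{\bar C}\) so \(\pi(\idem K)V^{\bar C'}\subseteq V_*^K\), while the paper's \(N\)-argument gives the reverse), but you should say it.

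Two cosmetic points: the volume factor in your first display is spurious, since \(\idem K=\idem{K^+}\idem{K^0}\idem{K^-}\) already as normalised idempotents; and the stage-zero map \(V^K\to\varinjlim T\cong V_*^K\) is the projection onto \(\im T\) along \(\ker T\), not \(v\mapsto Tv\) (on \(V_*^K\) these agree up to the automorphism \(T|_{V_*^K}\), so your conclusion is unaffected).
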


Conversely, \eqref{eq:8} and~\eqref{eq:9} imply that \(\ker \pi\idem{K\mu^NK}\) and \(\im \pi\idem{K\mu^NK}\) are independent of~\(N\) for \(N\ge n\), so that Theorem~\ref{thm:7} provides an equivalent reformulation of the Stabilisation Theorem.  This variant is closer to the Jacquet Lemma and generalises \cite{Casselman:Characters_Jacquet}*{Proposition 3.3}.

\begin{proof}
  By assumption,
  \[
  \ker \pi\idem{K \mu^N K}
  = \ker \pi\idem{K\mu^n K},\qquad
  \im \pi\idem{K\mu^N K}
  = \im \pi\idem{K \mu^nK}
  \]
  for all \(N\ge n\).  For any open subgroup \(\opp{C}\subseteq\opp{U}\), there is \(N\ge n\) such that \(\opp{C} \subseteq \mu^N (K\cap \opp{U}) \mu^{-N}\).  Hence the image of \(\pi(\idem{K}\idem{\opp{C}})\) contains
  \[
  \im \pi(\idem{K} \idem{\mu^N K \mu^{-N}})
  = \im \pi \idem{K \mu^N K}
  = \im \pi\idem{K\mu^n K}.
  \]
  If \(\opp{C} \supseteq \mu^n (K\cap \opp{U}) \mu^{-n}\), then \(\im \pi(\idem{K}\idem{\opp{C}}) \subseteq \im \pi(\idem{K \mu^n K})\) as well.  This yields~\eqref{eq:9}.

  Similarly, if~\(C\subseteq U\) is an open subgroup containing \(\mu^{-n} (K\cap U)\mu^n\), then \(\ker \pi(\idem{C} \idem{K}) = \ker \pi\idem{K\mu^nK}\).  This yields
  \[
  V^K \cap \ker \pi (\idem{C})
  = V^K \cap \ker \pi(\idem{K\mu^n K}).
  \]
  Proposition~\ref{pro:Jacquet_as_limit} for the opposite parabolic~\(\opp{P}\) implies
  \[
  \ker p_U^K
  = V^K\cap \bigcup \ker \pi\idem{K\mu^N K}
  = V^K \cap \pi\idem{K\mu^nK},
  \]
  hence~\eqref{eq:8}.  If \(\Op\colon W\to W\) is stable, then \(W\cong \ker \Op\oplus \im \Op\) by Lemma~\ref{lem:stable_decomposition}.  In particular, the stability of \(\pi\idem{K \mu^n K}\colon V^K\to V^K\) implies
  \[
  V^K = \ker \pi\idem{K \mu^n K} \oplus
  \im \pi\idem{K \mu^n K}.
  \]
  Thus \(V^K = \ker p_U^K \oplus V_*^K\) and \(p_U\colon V_*^K \to \Jacr_G^P(V)^{K \cap M}\) is injective.  Proposition~\ref{pro:Jacquet_as_limit} for~\(\opp{P}\) shows that the projection map \(V^K \to \Jacr_G^P(V)^{K \cap M}\) is surjective.  Hence so is its restriction to~\(V_*^K\).
\end{proof}

\section{Proof of the Stabilisation Theorem}
\label{sec:stabilisation}

In the first part of the proof, we will use the geometry of an apartment in the semisimple Bruhat--Tits building \(\bt\) of~\(G\) in order to reduce the assertion to the special case of semisimple groups of rank one.  The second part deals with this special case, using some basic results of Bruhat--Tits theory and combinatorics in certain finite subquotients of~\(G\).

We use the subgroups~\(\UC{x}^{(e)}\) for \(x \in \bt\), \(e \in \R_{\geq 0}\) constructed in \cite{Schneider-Stuhler:Rep_sheaves}*{Chapter~I}.  Their properties are listed in \cite{Meyer-Solleveld:Characters_growth}*{Section~5}.  In particular, they are in good position with respect to \(\{P,\opp{P}\}\) and satisfy \(g \UC{x}^{(e)} g^{-1} = \UC{g x}^{(e)}\) for all \(g \in G\).

Let~\(\ST\) be a maximal split torus of~\(M\) (hence of~\(G\)) and let~\(A_\ST\) be the apartment of~\(\bt\) corresponding to~\(\ST\).  Let \(\lambda \in \ST\) be an element that is central in~\(M\) and strictly positive with respect to \((\opp{P},M)\).  In particular, \(\lambda\)~acts as a translation on~\(A_\ST\).  Pick \(x_0 \in A_\ST\) and write \(x_n = \lambda^n x_0\) for \(n \in \Z\).  We assume that~\(x_0\) is generic, in the sense that the geodesic line through the points~\(x_n\) contains no point that lies in two different walls of~\(A_\ST\).  By Lemma~\ref{lem:1}
\begin{equation}
  \label{eq:Ux0}
  \idem{\UC{x_0}^{(e)} \lambda \UC{x_0}^{(e)}}^n \Hecke(G)
  = \idem{\UC{x_0}^{(e)}} \lambda^n \idem{\UC{x_0}^{(e)}} \Hecke(G)
  = \idem{\UC{x_0}^{(e)}} \idem{\UC{x_n}^{(e)}} \Hecke(G)
\end{equation}
and \(\Hecke(G)\idem{\UC{x_0}^{(e)} \lambda \UC{x_0}^{(e)}}^n = \Hecke(G)\idem{\UC{x_{-n}}^{(e)} \UC{x_0}^{(e)}}\).  We will show:

\begin{lem}
  \label{lem:stabilisation_special}
  The right ideals \(\idem{\UC{x_0}^{(e)}}\idem{\UC{x_n}^{(e)}} \Hecke(G)\) and left ideals \(\Hecke(G)\idem{\UC{x_{-n}}^{(e)}}\idem{\UC{x_0}^{(e)}}\) stabilise for \(n\in\N\) larger than some
explicitly computable bound.
\end{lem}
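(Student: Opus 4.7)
The plan is to use an Iwahori-type factorisation of the Schneider--Stuhler subgroups \(\UC{x}^{(e)}\) along the root subgroups of \((G,\ST)\) in order to reduce the stabilisation statement to its analogue for a finite family of rank-one subgroups of~\(G\); the rank-one case is then handled directly in the affine Bruhat--Tits tree. The left-ideal statement will follow by symmetry from the right-ideal statement applied to \(\opp P\) (noting that \(\lambda^{-1}\) is strictly positive with respect to \((P,M)\)), so it suffices to concentrate on the right ideals.

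Concretely, write \(\Phi = \Phi(G,\ST)\), and for each \(\alpha\in\Phi\) let \(U_\alpha\) denote the corresponding root subgroup with its Moy--Prasad filtration \(\{U_{\alpha,r}\}_{r\in\R}\). For any ordering of the roots, the subgroup \(\UC{x}^{(e)}\) admits a product decomposition
\[
\UC{x}^{(e)} = \Bigl(\prod_{\alpha\in\Phi} U_{\alpha,\alpha(x)+e}\Bigr) \cdot \bigl(\UC{x}^{(e)}\cap Z_G(\ST)\bigr).
\]
Since \(\lambda\) is central in \(M\) and strictly positive with respect to \((\opp P,M)\), the roots split into three families: for \(\alpha\in\Phi(U,\ST)\) one has \(\alpha(\lambda)>0\), so \(U_{\alpha,\alpha(x_n)+e}\) shrinks monotonically in~\(n\); for \(\alpha\in\Phi(\opp U,\ST)\) one has \(\alpha(\lambda)<0\), so this factor grows monotonically; and for roots of~\(M\) it is independent of~\(n\). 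Genericity of~\(x_0\) ensures that the filtration jumps occur one affine wall at a time along the geodesic through the points~\(x_n\).

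The main step, and the main obstacle, is a rank-reduction argument showing that for \(n\) sufficiently large one can produce an element \(z_n\in\Hecke(G)\) with
\[
\idem{\UC{x_0}^{(e)}}\idem{\UC{x_n}^{(e)}} = \idem{\UC{x_0}^{(e)}}\idem{\UC{x_{n+1}}^{(e)}}\cdot z_n,
\]
which gives the reverse inclusion to the trivial one \(\idem{\UC{x_0}^{(e)}}\idem{\UC{x_{n+1}}^{(e)}}\Hecke(G)\subseteq\idem{\UC{x_0}^{(e)}}\idem{\UC{x_n}^{(e)}}\Hecke(G)\) and thus stabilises the right ideal. The idea is to isolate the single affine wall, determined by some root \(\alpha\), crossed between \(x_n\) and \(x_{n+1}\), and to absorb the new factor of the Iwahori decomposition into a convolution with an element coming from the rank-one semisimple subgroup generated by \(U_\alpha\) and \(U_{-\alpha}\). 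The delicate point is that distinct root subgroups do not commute and produce higher commutators, so the decomposition of \(\idem{\UC{x_0}^{(e)}}\idem{\UC{x_n}^{(e)}}\) into contributions from rank-one factors must be performed with careful control of the order of multiplication. This is the step where a subtle error is most likely to enter, because a root-by-root stabilisation bound must be stitched together into a statement about the whole group.

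For \(G\) of semisimple rank one, the building \(\bt\) is a tree, \(\lambda\) acts as a translation on the standard apartment, and the subgroups \(\UC{x_n}^{(e)}\) are explicit Moy--Prasad subgroups of the parahorics fixing neighbourhoods of \(x_n\). Here an affine Bruhat decomposition relative to \(\UC{x_0}^{(e)}\) and \(\UC{x_n}^{(e)}\) exhibits \(\idem{\UC{x_0}^{(e)}}\idem{\UC{x_n}^{(e)}}\Hecke(G)\) as spanned by a finite set of elements indexed by walls of \(A_\ST\), and an explicit bound on \(n\) for stabilisation can be read off by counting how many walls must be crossed before the generators of the ideal cease to change. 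Combining the rank-one bound with the rank reduction of the previous step yields an explicit bound depending on the root system \(\Phi\) and the level \(e\).
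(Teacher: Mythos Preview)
Your overall strategy---reduce to rank one by peeling off the root-group factors in an Iwahori-type decomposition of \(\UC{x}^{(e)}\), then treat the rank-one case directly---is exactly the paper's strategy. However, two points deserve comment.

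First, and most importantly: you have put your finger precisely on the step where the paper's own argument breaks down. The paper carries out the rank reduction in its Lemma~\ref{lem:rank_reduction} by rewriting \(\UC{x_0}^{(e)}(U\cap P_{y_i})\) so as to isolate a single factor \(\Un_{-\alpha,-r_i}T_\alpha^{(e)}\Un_{\alpha,r_i}\), invoking \cite{Bruhat-Tits:Reductifs_I}*{6.4.48} to reorder the remaining root-group factors. The authors later discovered that this reordering is only valid when \(e=0\), and they were unable to repair the argument; the paper carries an explicit erratum to this effect. So your caveat that ``this is the step where a subtle error is most likely to enter'' is not hypothetical: it is exactly the gap that currently leaves the proof incomplete. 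Your sketch does not supply any mechanism for controlling the higher commutators beyond what the paper attempts, so your proposal inherits the same gap. (Two smaller inaccuracies in your reduction step: there will in general be several walls, not one, between \(x_n\) and \(x_{n+1}\), which is why the paper introduces the intermediate points~\(y_i\); and with the conventions of~\eqref{eq:Uealpha} the \(U\)-part of \(\UC{x_n}^{(e)}\) \emph{grows} with~\(n\), not shrinks.)

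Second, in the rank-one case the paper does something rather different from, and sharper than, what you sketch. Rather than an affine Bruhat decomposition, it passes to a finite quotient \(L_{\alpha,e,r}=(P_y\cap G_\alpha)/(\UC{y}^{(e)}\cap G_\alpha)\), introduces a specific central element \(S\in\Hecke(T_\alpha^{(0)}/T_\alpha^{(e)})\) satisfying \(([\Un_{-\alpha,-r}][\Un_{\alpha,r+}])^2=[\Un_{-\alpha,-r}]S[\Un_{\alpha,r+}]\), and proves \(S\) is invertible over \(\Z[1/p]\) by a trace argument modulo every prime \(l\neq p\). This yields the precise identity \eqref{eq:HeckeLalpha} needed for the rank-reduction lemma, and is what gives the explicit bound~\eqref{eq:lower_bound_m}. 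Your tree-based sketch is plausible but would need to be made equally explicit to produce the input the reduction step requires.
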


Lemma~\ref{lem:stabilisation_special} and the second half of Proposition~\ref{pro:Jacquet_Stability_as_limit_converse} show that \(\idem{\UC{x_0}^{(e)} \lambda \UC{x_0}^{(e)}}^n\) is stable on any smooth representation on an \(\ring\)\nb-module for any \(n\ge n_0\) for an explicitly computable~\(n_0\), not depending on the representation.  The subgroups \(\UC{x_0}^{(e)}\) for \(e\in\N\) form a neighbourhood basis of~\(1\).  Hence Lemma~\ref{lem:stabilisation_special} together with Propositions \ref{pro:stability_to_adjointness} and~\ref{pro:Jacquet_Stability_as_limit_converse} establishes the Second Adjointness Theorem.  The Second Adjointness Theorem together with Proposition~\ref{pro:adjointness_to_stability} yields the Stabilisation Theorem~\ref{the:stabilisation_general} in complete generality, for any compact open subgroup~\(K\) in good position with respect to \(\{P,\opp{P}\}\).  Hence the hypothesis of Theorem~\ref{thm:7} is always satisfied for some \(n\in\N\).  As a result, our main theorems all follow from Lemma~\ref{lem:stabilisation_special}.  But before we can prove it we need some supplementary technical results.

The building \(\bt\) is constructed via a valuated root datum on~\(G\), and it is this structure that we will use mostly.  The definition and construction of valuated root data is due to Fran\c{c}ois Bruhat and Jacques Tits \cites{Bruhat-Tits:Reductifs_I, Bruhat-Tits:Reductifs_II}.  We summarised some of their theory in \cite{Meyer-Solleveld:Characters_growth}*{Section~3}.

Let \(\valu\colon \F^\times \to \R\) be the discrete valuation of the field~\(\F\), let~\(\Phi\) be the root system of~\((G,\ST)\) and let~\(\Phr\) be the subset of reduced roots.  Since the Lie algebra of~\(G\) is, as an \(\ST\)\nb-representation, the direct sum of the Lie algebras of \(\opp{U}\),~\(M\) and~\(U\), there is a corresponding partition
\[
\Phi \cup \{0\} = \Phi_- \cup \Phi_0 \cup \Phi_+.
\]
The conditions on~\(\lambda\) translate to
\begin{alignat*}{2}
  \valu(\alpha(\lambda)) &<0&\qquad&\text{for \(\alpha \in \Phi_+\),}\\
  \valu(\alpha(\lambda)) &>0&\qquad&\text{for \(\alpha \in \Phi_-\), and}\\
  \valu(\alpha(\lambda)) &=0&\qquad&\text{for \(\alpha \in \Phi_0\).}
\end{alignat*}
The root subgroups \(\Un_\alpha \subset G\) for \(\alpha \in \Phi\) are filtered by compact subgroups~\(\Un_{\alpha,r}\) for \(r\in\R\).  These groups decrease when~\(r\) increases, and the set of jumps is \(\epal \Z\) for some \(\epal \in \R_{>0}\).  We put \(U_{\alpha} \defeq \{1\}\) if \(\alpha \notin \Phi \cup \{0\}\) and \(\Un_{2\alpha,r} \defeq \Un_{\alpha,r/2} \cap \Un_{2 \alpha}\) if \(\alpha, 2 \alpha \in \Phi\).  The centraliser~\(Z_G(S)\) of~\(S\) in~\(G\) plays the role of~\(\Un_0\), but we prefer not to use the latter notation.  The maximal compact subgroup~\(H\) of~\(Z_G(S)\) is filtered by normal compact open subgroups~\(H_r\) for \(r\in\R_{\ge 0}\), which are pro\nb-\(p\) when \(r > 0\).  We write \(\Un_{\alpha,r+} = \bigcup_{s > r} \Un_{\alpha,s}\) and \(H_{r+} = \bigcup_{s > r} H_s\).  The set of jumps of the filtration~\((H_r)\) is discrete, so \(H_{0+}\) is pro\nb-\(p\).

Let~\(P_X\) denote the pointwise stabiliser of a subset \(X \subseteq \bt\).  By construction,
\begin{equation}
  \label{eq:Uealpha}
  P_x \cap \Un_\alpha = \Un_{\alpha,-\alpha(x)} \quad \text{and} \quad
  \UC{x}^{(e)} \cap \Un_\alpha = \Un_{\alpha,(e - \alpha(x))+} \Un_{2 \alpha, (e - 2 \alpha (x))+},
\end{equation}
for \(x \in A_\ST\), where~\(\alpha \in \Phi\) is simultaneously regarded as a root of~\((G,\ST)\) and as an affine function on~\(A_\ST\).  Moreover, the multiplication maps
\begin{equation}
  \label{eq:uniqueFactorisation}
  \prod_{\alpha \in \Phr_+} \Un_{\alpha,-\alpha(x)} \to P_x \cap U \quad \text{and} \quad
  H_{e{+}} \! \times \! \prod_{\alpha \in \Phr} (\UC{x}^{(e)} \cap \Un_{\alpha}) \to \UC{x}^{(e)}
\end{equation}
are bijective for every ordering of~\(\Phr\) by \cite{Bruhat-Tits:Reductifs_I}*{Proposition 6.4.9} and \cite{Schneider-Stuhler:Rep_sheaves}*{Proposition I.2.7}.

\begin{example}
  \label{ex:SL2U}
  For \(G = \Sl_2 (\F)\), \(r \in \R\) and \(s \in \R_{>0}\), we have
  \begin{align*}
    \Un_{\alpha,r} &= \{ \stwomatrix{1}{x}{0}{1} : \valu (x) \ge r \}, \\
    \Un_{-\alpha,r} &= \{ \stwomatrix{1}{0}{x}{1} : \valu (x) \ge r \}, \\
    H_s &= \Bigl\{ \stwomatrix{1+x}{0}{0}{(1+x)^{-1}} : \valu (x) \ge s \Bigr\}.
  \end{align*}
  Let \(\integ = \{ x \in \F : \valu (x) \ge 0 \}\) and let \(\unif \in \integ\) be a uniformiser.  If~\(x\) is the origin of \(A_\ST \cong \R\) and \(e \in \N\), then \(P_x = \Sl_2 (\integ)\) and
  \[
  \UC{x}^{(e)} = \ker \bigl( \Sl_2 (\integ) \to \Sl_2 (\integ / \unif^{e+1} \integ) \bigr) =
  \biggl\{ \twomatrix{1+x}{y}{z}{\frac{1+yz}{1+x}} : x,y,z \in \unif^{e+1} \integ\biggr\}.
  \]
\end{example}

\subsection{Reduction to rank one}

Equations \eqref{eq:lambdaK+} and~\eqref{eq:lambdaK-} yield \(N\in\N\) with
\[
\UC{x_N}^{(e)} = \lambda^N \UC{x_0}^{(e)} \lambda^{-N} \supseteq P_{x_0} \cap U
\quad \text{and} \quad
P_{x_N} \cap \opp{U} = \lambda^N (P_{x_0} \cap \opp{U}) \lambda^{-N} \subseteq \UC{x_0}^{(e)}.
\]
Put \(y_0 = x_N \in A_\ST\) for the smallest such \(N \in \N\).  Let \(m \in \N\) be so large that \(\UC{x_m}^{(e)} \supseteq P_{y_0} \cap U\) and let \(y_1, \dotsc, y_{\ell (m)}\) be the points of the geodesic line segment \([y_0,x_m] \subset A_\ST\) where the isotropy groups jump, ordered from~\(x_N\) to~\(x_m\).  These are exactly the intersection points of the line segment \([y_0,x_m]\) with walls, as in Figure~\ref{fig:ys}.
\begin{figure}[h]
\includegraphics[width=12cm,height=5cm]{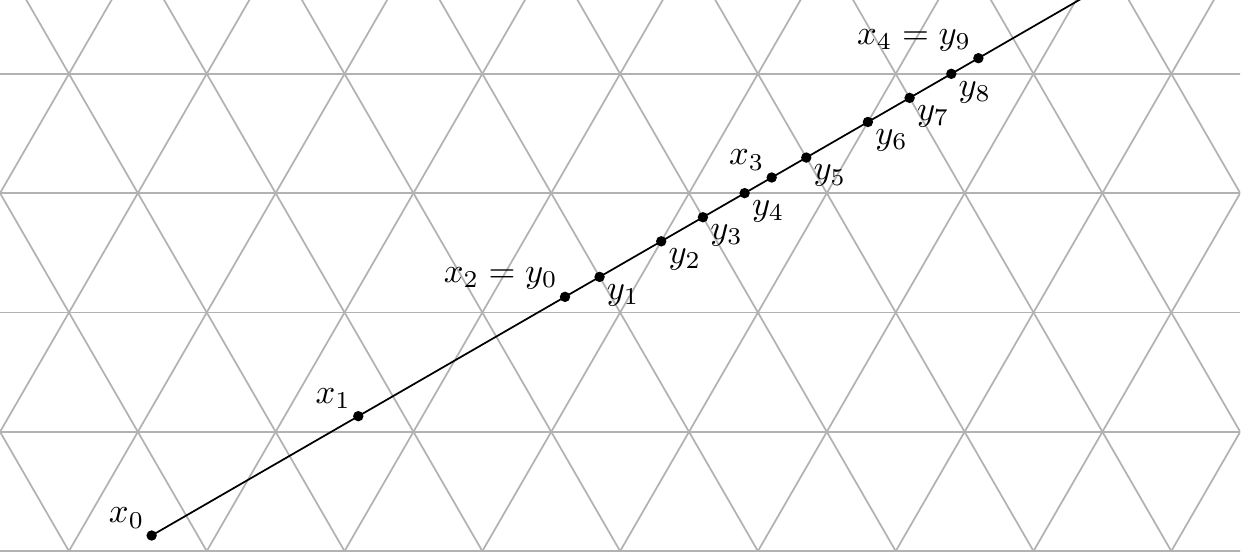}
  \caption{An example for an \(\widetilde{A_2}\)-building with \(N=2\), \(m=4\), \(\ell(m)=9\)}
  \label{fig:ys}
\end{figure}
Since \(x_0 \in A_\ST\) is generic, the wall containing~\(y_i\) is unique and contains a unique reduced positive root~\(\alpha_i\).  That is, \(-\alpha (y_i) \in \epsilon_{\alpha_i} \Z\) either for one root \(\alpha \in \Phr_+\) or for a pair of roots \(\alpha, 2\alpha \in \Phi_+\).

\begin{lem}
  \label{lem:filtration}
  We get a filtration
  \begin{equation}
    \label{eq:filtration1}
    P_{y_0} \cap U \subsetneq P_{y_1} \cap U \subsetneq \dotsb \subsetneq P_{y_{\ell(m)-1}} \cap U
    \subsetneq P_{y_{\ell(m)}} \cap U  = P_{x_m} \cap U.
  \end{equation}
  with the following properties:
  \begin{itemize}
  \item \(P_{y_i} \cap U \subseteq (P_{y_{i-1}} \cap U) \Un_{\alpha_i}\);
  \item \((P_{y_{i-1}} \cap U) \backslash (P_{y_i} \cap U) \cong \Un_{\alpha_i,\epsilon_{\alpha_i} - \alpha_i (y_i)} \backslash \Un_{\alpha_i,-\alpha_i (y_i)}\);
  \item \(P_{y_{i-1}} \cap U\) is normal in \(P_{y_i} \cap U\).
  \end{itemize}
\end{lem}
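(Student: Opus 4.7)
The plan is to exploit the product decomposition \(P_y \cap U = \prod_{\alpha \in \Phr_+} \Un_{\alpha, -\alpha(y)}\) provided by~\eqref{eq:uniqueFactorisation}, and to analyze how its individual factors vary as \(y\) moves along the geodesic segment from~\(y_0\) to~\(x_m\). The genericity of~\(x_0\) ensures that between two consecutive wall-crossings \(y_{i-1}\) and~\(y_i\) the segment meets a wall only at~\(y_i\), namely the wall associated to the reduced root~\(\alpha_i\). Consequently, for every \(\alpha \in \Phr_+\) different from~\(\alpha_i\) the value \(-\alpha(y)\) never reaches a new point of \(\epsilon_\alpha \Z\) as \(y\) runs from \(y_{i-1}\) to~\(y_i\), which yields \(\Un_{\alpha, -\alpha(y_{i-1})} = \Un_{\alpha, -\alpha(y_i)}\). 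For \(\alpha = \alpha_i\), in contrast, the filtration jumps precisely at~\(y_i\), and genericity forces \(-\alpha_i(y_{i-1}) \in (-\alpha_i(y_i),\, \epsilon_{\alpha_i} - \alpha_i(y_i))\), so that \(\Un_{\alpha_i, -\alpha_i(y_{i-1})} = \Un_{\alpha_i, \epsilon_{\alpha_i} - \alpha_i(y_i)}\), a proper subgroup of \(\Un_{\alpha_i, -\alpha_i(y_i)}\).

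These facts already yield the strict chain~\eqref{eq:filtration1}, the terminal equality \(P_{y_{\ell(m)}} \cap U = P_{x_m} \cap U\) (since \(y_{\ell(m)} = x_m\) is the final wall-crossing by construction), and the first two itemized properties. Indeed, ordering the product in~\eqref{eq:uniqueFactorisation} so that~\(\alpha_i\) appears last expresses every element of \(P_{y_i} \cap U\) as a product of an element of \(P_{y_{i-1}} \cap U\) and a factor in \(\Un_{\alpha_i, -\alpha_i(y_i)} \subseteq \Un_{\alpha_i}\); this is property~(i). Combining this with the uniqueness of the factorisation and the identification \(\Un_{\alpha_i, -\alpha_i(y_{i-1})} = \Un_{\alpha_i, \epsilon_{\alpha_i} - \alpha_i(y_i)}\) from the previous paragraph then yields the quotient description in property~(ii).

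The remaining property~(iii), normality of \(P_{y_{i-1}} \cap U\) in \(P_{y_i} \cap U\), is the technical heart. By~(i) it suffices to show that conjugation by any element of \(\Un_{\alpha_i, -\alpha_i(y_i)}\) sends each root-group factor \(\Un_{\beta, -\beta(y_{i-1})}\) of \(P_{y_{i-1}} \cap U\) into \(P_{y_{i-1}} \cap U\). For this I invoke the Bruhat--Tits commutator relation \([\Un_{\alpha_i, r}, \Un_{\beta, s}] \subseteq \prod_{p, q \ge 1,\, p\alpha_i + q\beta \in \Phi} \Un_{p\alpha_i + q\beta,\, pr + qs}\) with \(r = -\alpha_i(y_i)\) and \(s = -\beta(y_{i-1})\). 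Every resulting positive root \(\gamma = p\alpha_i + q\beta\) satisfies \(q \ge 1\) and hence differs from both \(\alpha_i\) and~\(2\alpha_i\); one must then check \(\Un_{\gamma,\, -p\alpha_i(y_i) - q\beta(y_{i-1})} \subseteq \Un_{\gamma,\, -\gamma(y_{i-1})}\). Since the filtration of~\(\Un_\gamma\) jumps only at \(\epsilon_\gamma \Z\), this is equivalent to the statement that the two real exponents have the same ceiling in \(\epsilon_\gamma \Z\). Genericity supplies exactly this: the absence of \(\gamma\)-walls on the segment \([y_{i-1}, y_i]\) places \(\gamma(y_{i-1})\) and \(\gamma(y_i)\) between the same two consecutive points of \(\epsilon_\gamma \Z\), and the linear expansion \(\gamma(y_i) - \gamma(y_{i-1}) = p(\alpha_i(y_i) - \alpha_i(y_{i-1})) + q(\beta(y_i) - \beta(y_{i-1}))\) with positive summands then forces \(p\bigl(\alpha_i(y_i) - \alpha_i(y_{i-1})\bigr) < \epsilon_\gamma\), which is precisely the ceiling identity required.

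I expect the main obstacle to be the careful bookkeeping around non-reduced roots, in particular the case \(\beta = \alpha_i\) with \(2\alpha_i \in \Phi\), where the commutator formula lands in \(\Un_{2\alpha_i}\) rather than in a genuinely new root subgroup; here one must separately verify that the ceiling comparison still goes through at the doubled root. Beyond this technicality, the argument is conceptually transparent: a generic geodesic makes exactly one root filtration change at each wall-crossing while all others remain locally constant, and the commutator estimates needed for normality are controlled by the same genericity statement used to identify the unchanged factors.
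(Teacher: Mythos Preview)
Your argument follows the same strategy as the paper's: use the product decomposition~\eqref{eq:uniqueFactorisation} together with the fact that only the \(\alpha_i\)-factor changes between \(y_{i-1}\) and \(y_i\) to obtain the filtration and the first two properties, and then invoke commutator relations for normality. The first two parts match the paper exactly.

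For normality the paper takes a shorter route than yours. Rather than tracking filtration levels in each \(\Un_\gamma\) via the valuated commutator formula, the paper observes two things: first, \([\Un_{\alpha_i,r},\Un_{\alpha_i,r+\epsilon_{\alpha_i}}]\subseteq \Un_{2\alpha_i,2r+\epsilon_{\alpha_i}}\subseteq \Un_{\alpha_i,r+\epsilon_{\alpha_i}}\), which handles the \(\beta=\alpha_i\) case; second, for \(\beta\neq\alpha_i\) the unvaluated relation \([\Un_{\alpha_i},\Un_\beta]\subseteq\langle \Un_{n\alpha_i+m\beta}:m,n\ge1\rangle\) lands entirely in root subgroups with reduced part \(\neq\alpha_i\). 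Since the commutator automatically lies in the group \(P_{y_i}\cap U\), and since \(P_{y_i}\cap U\) and \(P_{y_{i-1}}\cap U\) agree in every reduced-root factor except \(\alpha_i\), one is done without ever comparing individual filtration indices.

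Your valuated approach is also valid, but the final step is not quite as you wrote it: the inequality \(p(\alpha_i(y_i)-\alpha_i(y_{i-1}))<\epsilon_\gamma\) alone does \emph{not} force the two exponents \(-p\alpha_i(y_i)-q\beta(y_{i-1})\) and \(-\gamma(y_{i-1})\) to share a ceiling in \(\epsilon_\gamma\Z\). What does work is the observation that \(-p\alpha_i(y_i)-q\beta(y_{i-1})\) lies strictly between \(-\gamma(y_i)\) and \(-\gamma(y_{i-1})\) (the lower bound coming from \(q(\beta(y_i)-\beta(y_{i-1}))>0\)), and these two endpoints have the same ceiling because no \(\gamma\)-wall separates them. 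Once you phrase it this way the non-reduced bookkeeping you flagged also evaporates, since everything reduces to the equality \(P_{y_i}\cap \Un_\delta = P_{y_{i-1}}\cap \Un_\delta\) for reduced \(\delta\neq\alpha_i\), which you have already established.
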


\begin{proof}
  Equation~\eqref{eq:uniqueFactorisation} implies that we get a filtration with the first two properties because \(\beta(y_{i-1}) =\beta(y_i)\) for \(\beta\in\Phr\setminus\{\alpha_i\}\) and \(\alpha_i(y_{i-1}) < \alpha_i(y_i)\).
  \cite{Bruhat-Tits:Reductifs_I}*{6.2.1} yields
  \[
  [\Un_{\alpha,r},\Un_{\alpha,r+ \epal}] \subseteq \Un_{2 \alpha, 2r + \epal}
  \subseteq \Un_{\alpha,r + \epal / 2} = \Un_{\alpha,r+ \epal}.
  \]
  Therefore~\(\Un_{\alpha,r+ \epal}\) is normal in~\(\Un_{\alpha,r}\) for all \(\alpha \in \Phi\) and \(r\in\R\).  Moreover, \([\Un_\alpha,\Un_\beta]\) for \(\alpha,\beta \in \Phi^+\) is contained in the group generated by the~\(\Un_{n\alpha + m\beta}\) for \(m,n \in \Z_{>0}\).  The third property of the filtration follows.
\end{proof}

We will use the filtration \eqref{eq:filtration1} to reduce the proof of Lemma~\ref{lem:stabilisation_special} to groups of rank one.  For \(\alpha \in \Phr\) let~\(G_\alpha\) be the subgroup of~\(G\) generated by \(\Un_{-\alpha} \cup \Un_\alpha\), and let \(T_\alpha \defeq G_\alpha \cap Z_G (\ST)\).  By \cite{Bruhat-Tits:Reductifs_I}*{6.2.1 and 6.3.4}, the normaliser of~\(T_\alpha\) in~\(G_\alpha\) is \(T_\alpha \sqcup M_\alpha\), where~\(M_\alpha\) is a single coset of~\(T_\alpha\).  Conjugation by elements of~\(M_\alpha\) exchanges \(\Un_\alpha\) and~\(\Un_{-\alpha}\). The groups \(T_\alpha^{(e)} \defeq T_\alpha \cap H_{e{+}} \; (e \in \R_{\geq 0})\) are pro\nb-\(p\), because \(H_{0+}\) is.

\begin{lem}
  \label{lem:rank_reduction}
  Suppose that
  \begin{equation}\label{eq:HeckeLalpha}
    \idem{\Un_{-\alpha,-r}} \idem{T_\alpha^{(e)} \Un_{\alpha,r + \epal}} \in
    \idem{\Un_{-\alpha,- r}} \idem{T_\alpha^{(e)} \Un_{\alpha,r}}
    \Hecke(G_\alpha),
  \end{equation}
  for all \(\alpha \in \Phr\) and all \(r \in \epal \Z\).  Then, for \(y_0\) and \(m \in \N\) as above:
  \[
  \idem{\UC{x_0}^{(e)}} \idem{\UC{x_m}^{(e)}} \Hecke(G) = \idem{\UC{x_0}^{(e)}} \idem{U \cap P_{y_0}} \Hecke(G).
  \]
\end{lem}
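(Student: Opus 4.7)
My strategy is to reduce to the rank-one hypothesis \eqref{eq:HeckeLalpha} by inducting along the filtration of Lemma~\ref{lem:filtration}, one wall-crossing at a time. The easy containment $\idem{\UC{x_0}^{(e)}}\idem{\UC{x_m}^{(e)}}\Hecke(G)\subseteq\idem{\UC{x_0}^{(e)}}\idem{U\cap P_{y_0}}\Hecke(G)$ is immediate from the choice of $m$: since $\UC{x_m}^{(e)}\supseteq P_{y_0}\cap U$, we have $\idem{P_{y_0}\cap U}\idem{\UC{x_m}^{(e)}}=\idem{\UC{x_m}^{(e)}}$.

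For the converse, I would first simplify the left side. Factor $\UC{x_m}^{(e)}=(\UC{x_m}^{(e)}\cap\opp U)\cdot(\UC{x_m}^{(e)}\cap M)\cdot(\UC{x_m}^{(e)}\cap U)$ by \eqref{eq:uniqueFactorisation}. The positivity of $\lambda$ yields $\alpha(x_m)\ge\alpha(x_0)$ for $\alpha\in\Phi_+$, so $\UC{x_m}^{(e)}\cap\opp U\subseteq\UC{x_0}^{(e)}$; and $\alpha(x_m)=\alpha(x_0)$ for $\alpha\in\Phi_0$, so $\UC{x_m}^{(e)}\cap M=\UC{x_0}^{(e)}\cap M\subseteq\UC{x_0}^{(e)}$. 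Both factors get absorbed by $\UC{x_0}^{(e)}$ on the left, giving $\idem{\UC{x_0}^{(e)}}\idem{\UC{x_m}^{(e)}}=\idem{\UC{x_0}^{(e)}}\idem{\UC{x_m}^{(e)}\cap U}$. Using the inclusions $P_{y_0}\cap U\subseteq\UC{x_m}^{(e)}\cap U\subseteq P_{x_m}\cap U$ together with the reverse ordering $\idem{A}\Hecke(G)\supseteq\idem{B}\Hecke(G)$ for $A\subseteq B$, the converse inclusion is sandwiched between two others and reduces to proving
\[
\idem{\UC{x_0}^{(e)}}\idem{P_{y_0}\cap U}\in\idem{\UC{x_0}^{(e)}}\idem{P_{x_m}\cap U}\Hecke(G).
\]

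I would establish this by induction on the filtration $P_{y_0}\cap U\subsetneq P_{y_1}\cap U\subsetneq\cdots\subsetneq P_{x_m}\cap U$ of Lemma~\ref{lem:filtration}: for each $j\ge1$, show $\idem{\UC{x_0}^{(e)}}\idem{P_{y_{j-1}}\cap U}\in\idem{\UC{x_0}^{(e)}}\idem{P_{y_j}\cap U}\Hecke(G)$. Setting $\alpha=\alpha_j$ and $r=-\alpha(y_j)$ so that $r+\epal=-\alpha(y_{j-1})$, the choice of $N$ forces $\alpha(y_j)>e+\alpha(x_0)$, placing $\Un_{-\alpha,-r}$ inside $\UC{x_0}^{(e)}\cap\Un_{-\alpha}$; combined with $T_\alpha^{(e)}\subseteq H_{e+}\subseteq\UC{x_0}^{(e)}$, multiplying \eqref{eq:HeckeLalpha} on the left by $\idem{\UC{x_0}^{(e)}}$ absorbs those two factors, leaving
\[
\idem{\UC{x_0}^{(e)}}\idem{\Un_{\alpha,r+\epal}}\in\idem{\UC{x_0}^{(e)}}\idem{\Un_{\alpha,r}}\Hecke(G).
\]
Right-multiplying by $\idem{P_{y_{j-1}}\cap U}$ collapses the LHS to $\idem{\UC{x_0}^{(e)}}\idem{P_{y_{j-1}}\cap U}$ (since $\Un_{\alpha,r+\epal}\subseteq P_{y_{j-1}}\cap U$), while the RHS becomes $\idem{\UC{x_0}^{(e)}}\idem{\Un_{\alpha,r}}\cdot h\cdot\idem{P_{y_{j-1}}\cap U}$ for some $h\in\Hecke(G_\alpha)$. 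Pairing this with the factorization $P_{y_j}\cap U=\Un_{\alpha,r}\cdot(P_{y_{j-1}}\cap U)$ from Lemma~\ref{lem:filtration} is meant to identify the result as an element of $\idem{\UC{x_0}^{(e)}}\idem{P_{y_j}\cap U}\Hecke(G)$.

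The main obstacle I anticipate is this last identification. The element $h$ sits between $\idem{\Un_{\alpha,r}}$ and $\idem{P_{y_{j-1}}\cap U}$, and to realise the product literally as $\idem{\UC{x_0}^{(e)}}\idem{P_{y_j}\cap U}\cdot h'$ one must commute $h$ past $\idem{P_{y_{j-1}}\cap U}$. Since $h$ lives in the rank-one group $G_\alpha$ while $P_{y_{j-1}}\cap U$ involves all positive root groups, controlling this commutator requires the Chevalley formulas invoked in Lemma~\ref{lem:filtration}, the normality of $P_{y_{j-1}}\cap U$ in $P_{y_j}\cap U$, and the genericity of $x_0$ ensuring that only $\alpha_j$ is crossed at $y_j$.
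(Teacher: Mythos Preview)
Your overall architecture matches the paper's: the easy containment, the reduction of $\idem{\UC{x_0}^{(e)}}\idem{\UC{x_m}^{(e)}}$ to $\idem{\UC{x_0}^{(e)}}\idem{P_{y_i}\cap U}$, and induction along the filtration of Lemma~\ref{lem:filtration} are all exactly what the paper does. The divergence is in how the inductive step $f_{i-1}\in f_i\,\Hecke(G)$ is executed, and there your proposal has a genuine gap.

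The paper does \emph{not} absorb $\Un_{-\alpha,-r}$ into $\UC{x_0}^{(e)}$. Instead it inserts the auxiliary factor $(\opp{U}M\cap\UC{y_i}^{(e)})$ on the left of $U\cap P_{y_i}$ and then appeals to \cite{Bruhat-Tits:Reductifs_I}*{6.4.48} to reorder the product $\Un_{-\alpha,-r}\,T_\alpha^{(e)}\,\prod_{\beta\neq\alpha}\Un_{\beta,-\beta(y_i)}$, placing the $\alpha$-block $\Un_{-\alpha,-r}\,T_\alpha^{(e)}\,\Un_{\alpha,r_i}$ at the \emph{right end} (equations~\eqref{eq:separateAlpha1} and~\eqref{eq:separateAlpha2}). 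With the $\alpha$-block isolated on the right, hypothesis~\eqref{eq:HeckeLalpha} applies directly and the element $h\in\Hecke(G_\alpha)$ it produces lands on the far right of the product, with nothing to commute past. That is the whole point of the reordering.

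Your route puts $h$ in the middle, between $\idem{\Un_{\alpha,r}}$ and $\idem{P_{y_{j-1}}\cap U}$, and the obstacle you flag is real and not removable by the tools you list. The element $h$ produced in the rank-one argument (see~\eqref{eq:UUM} and Lemma~\ref{lem:S}) is supported on cosets involving $m(u)\in M_\alpha$ and on $\Un_{-\alpha}$; conjugation by such elements sends $\Un_\beta$ for $\beta\in\Phr_+\setminus\{\alpha\}$ into root groups $\Un_{s_\alpha\beta}$ or into commutator products that are not contained in $P_{y_{j-1}}\cap U$. Neither the normality statement in Lemma~\ref{lem:filtration} (which only concerns $P_{y_{j-1}}\cap U\trianglelefteq P_{y_j}\cap U$) nor the genericity of $x_0$ controls these commutators. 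So $\idem{\Un_{\alpha,r}}\,h\,\idem{P_{y_{j-1}}\cap U}$ cannot be identified with something of the form $\idem{P_{y_j}\cap U}\,h'$.

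In short: the missing idea is the Bruhat--Tits reordering that moves the rank-one block to the end \emph{before} invoking~\eqref{eq:HeckeLalpha}. You should note, however, that the paper itself flags that this reordering step via 6.4.48 is only justified for $e=0$; so even the paper's own argument is incomplete for general~$e$, and the authors were unable to repair it.
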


\begin{proof}
  The assumption \(\UC{x_m}^{(e)} \supseteq U \cap P_{y_0}\) implies \(\idem{\UC{x_0}^{(e)}} \idem{\UC{x_m}^{(e)}} \Hecke(G) \subseteq \idem{\UC{x_0}^{(e)}} \idem{U \cap P_{y_0}} \Hecke(G)\).  Let the \(y_i \in A_\ST\) be as in Lemma~\ref{lem:filtration} and consider the elements
  \[
  f_i \defeq \idem{\UC{x_0}^{(e)}} \idem{U \cap P_{y_i}} \in \Hecke(G).
  \]
  We have to show that \(f_0 \in \idem{\UC{x_0}^{(e)}} \idem{\UC{x_m}^{(e)}} \Hecke(G)\).  The groups~\(\UC{y_i}^{(e)}\) are well-placed with respect to \(\{P,\opp{P}\}\), and
  \begin{equation}
    \label{eq:barUM_compare}
    (\opp{U} M) \cap \UC{y_i}^{(e)} \subseteq (\opp{U} M) \cap \UC{x_0}^{(e)}.
  \end{equation}
  Thus
  \begin{alignat*}{2}
    f_i &= \idem{\UC{x_0}^{(e)}} \idem{U \cap P_{y_i}}
    &&= \bigidem{\UC{x_0}^{(e)}} \bigidem{(\opp{U} \cap \UC{y_i}^{(e)})
      (M \cap \UC{y_i}^{(e)}) (U \cap P_{y_i})}\\
    f_{\ell (m)} &= \idem{\UC{x_0}^{(e)}} \idem{\UC{x_m}^{(e)}} \idem{U \cap P_{x_m}}
    &&\in \idem{\UC{x_0}^{(e)}} \idem{\UC{x_m}^{(e)}} \Hecke(G).
  \end{alignat*}
  Hence the lemma follows if we show \(f_{i-1} \in f_i \Hecke(G)\) for all \(i \in \{1,2,\dotsc,\ell(m)\}\).

  We fix such an~\(i\) and abbreviate \(\alpha = \alpha_i \in \Phr_+\), \(r_i = -\alpha (y_i)\), and \(r_{i-1} = -\alpha (y_{i-1})\).  Notice that \(\Un_{\alpha,r_{i-1}} = \Un_{\alpha,r_i + \epal}\).  Using \eqref{eq:barUM_compare}, \eqref{eq:uniqueFactorisation}, and \(\Un_{-\alpha,-r_i} T_\alpha^{(e)} \subseteq (\opp{U} M) \cap \UC{y_i}^{(e)}\), we may rewrite
  \begin{multline}\label{eq:UPyi}
    \UC{x_0}^{(e)} (U \cap P_{y_i}) =
    \UC{x_0}^{(e)}(\opp{U}M \cap \UC{y_i}^{(e)}) (U \cap P_{y_i})
    \\= \UC{x_0}^{(e)}(\opp{U}M \cap \UC{y_i}^{(e)}) \Bigl( \Un_{-\alpha,-r_i} T_\alpha^{(e)}
    \prod_{\beta \in \Phr_+ \setminus \{\alpha\}} \Un_{\beta,-\beta (y_i)} \Bigr) \Un_{\alpha,r_i}.
  \end{multline}
{\large \textbf{Problem:} the next step is only correct when $e = 0$, that is, when $\UC{x_0}^{(e)}$ is a very large
pro-$p$-group. Therefore the proofs of this Lemma and of the Stabilisation Theorem are incomplete.}

  The expression between the large brackets satisfies the conditions of \cite{Bruhat-Tits:Reductifs_I}*{6.4.48}, so it is a group and changing the order of the factors does not make a difference.  In particular, we can use the ordering
  \begin{equation}
    \label{eq:separateAlpha1}
    \UC{x_0}^{(e)} (U \cap P_{y_i})
    = \UC{x_0}^{(e)}(\opp{U}M \cap \UC{y_i}^{(e)}) \biggl(\prod_{\beta \in \Phr_+ \setminus \{\alpha\}} \Un_{\beta,-\beta (y_i)} \biggr) \Un_{-\alpha,-r_i} T_\alpha^{(e)} \Un_{\alpha,r_i}.
  \end{equation}

  We may perform a similar computation for \(y_{i-1}\) instead of~\(y_i\).  The only differences in~\eqref{eq:uniqueFactorisation} involve the roots~\(\pm\alpha\), and \(\Un_{-\alpha,\alpha(y_{i-1})}\) is absorbed by \(\opp{U} \cap P_{y_0}\).  Thus
  \begin{equation}
    \label{eq:separateAlpha2}
    \UC{x_0}^{(e)} (U \cap P_{y_{i-1}})
    = \UC{x_0}^{(e)}(\opp{U}M \cap \UC{y_i}^{(e)}) \prod_{\beta \in \Phr_+ \setminus \{\alpha\}}
    \Un_{\beta,-\beta (y_i)} \Un_{-\alpha, - r_i} T_\alpha^{(e)} \Un_{\alpha,r_{i-1}}.
  \end{equation}
  Equations \eqref{eq:separateAlpha1} and~\eqref{eq:separateAlpha2} show that our assumption~\eqref{eq:HeckeLalpha} implies what we want.
\end{proof}

\begin{proof}[Proof of Lemma~\textup{\ref{lem:stabilisation_special}} assuming \eqref{eq:HeckeLalpha}]
The right ideals \(\idem{\UC{x_0}^{(e)}}\idem{\UC{x_m}^{(e)}} \Hecke(G)\) stabilise for sufficiently large \(m\in\N\) by Lemma~\ref{lem:rank_reduction}.  An analogous argument for~\(\opp{U}\) instead of~\(U\) shows that the right ideals \(\idem{\UC{x_0}^{(e)}}\idem{\UC{x_{-m}}^{(e)}} \Hecke(G)\) stabilise for sufficiently large \(m\in\N\).  Since the involution \(f^*(x) \defeq f(x^{-1})\) on \(\Hecke(G)\) maps the latter to the left ideal \(\Hecke(G)\idem{\UC{x_{-m}}^{(e)}}\idem{\UC{x_0}^{(e)}}\), we get Lemma~\ref{lem:stabilisation_special}.  Let us estimate how large~\(m\) must be chosen.  We need
  \[
  \UC{x_m}^{(e)} \cap U \supseteq P_{y_0} \cap U = P_{x_N} \cap U
  \quad\text{and}\quad
  \UC{y_0}^{(e)} \cap U = \UC{x_N}^{(e)} \cap U \supseteq P_{x_0} \cap U.
  \]
  By~\eqref{eq:uniqueFactorisation}, these are fulfilled if
  \[
  \epal + e - \alpha (\lambda^m x_0) \le -\alpha (\lambda^N x_0)
  \quad\text{and}\quad \epal + e - \alpha (\lambda^N x_0) \le -\alpha (x_0),
  \]
  for all \(\alpha \in \Phr_+\).  The action of~\(Z_G(\ST)\) on~\(A_\ST\) satisfies
  \[
  \alpha (\lambda^k x_0) - \alpha (x_0) = -\valu (\alpha (\lambda^k)) =
  -k \valu (\alpha(\lambda))
  \]
  for all \(k\in\Z\).  Recall also \(\valu(\alpha(\lambda))<0\) for \(\alpha\in\Phr_+\).  Hence we may rewrite the above inequalities as
  \[
  \epal + e \le (m-N) \abs{\valu \alpha (\lambda)}
  \quad\text{and}\quad
  \epal + e \le N \abs{\valu \alpha (\lambda)},
  \]
  for all \(\alpha \in \Phr_+\).  We choose the smallest~\(N\) satisfying the second condition, that is,
  \[
  N = \left\lceil \max_{\alpha\in\Phr_+}
    \frac{\epal + e}{\abs{\valu \alpha (\lambda)}} \right\rceil.
  \]
  We have stability whenever~\(m\) satisfies the first condition, that is, for
  \begin{equation}
    \label{eq:lower_bound_m}
    m \ge 2 \left\lceil \max_{\alpha\in\Phr_+}
      \frac{\epal + e}{\abs{\valu \alpha (\lambda)}} \right\rceil.\qedhere
  \end{equation}
\end{proof}

\begin{remark}
  \label{rem:lower_bound_optimal}
  The bound~\eqref{eq:lower_bound_m} is not far from the optimum.  Let
  \[
  m \defeq 2 \left\lfloor \min_{\alpha\in\Phi_+}
    \frac{\epal + e}{\abs{\valu \alpha (\lambda)}} \right\rfloor.
  \]
  A computation as above shows that \(\UC{x_0}^{(e)} \cup \UC{x_m}^{(e)}\) is contained in the group~\(P_{x_{m/2}}\), which is compact modulo the centre of~\(G\).  We may write down explicit functions on~\(P_{x_{m/2}}\) that belong to \(\idem{\UC{x_0}^{(e)}} \idem{\UC{x_{m-1}}^{(e)}} \Hecke(G)\) but not to \(\idem{\UC{x_0}^{(e)}} \idem{\UC{x_m}^{(e)}} \Hecke(G)\).  In particular, \eqref{eq:lower_bound_m} is optimal if~\(\Phi\) is of type~\(A_1\) and \((\epal + e)/ \abs{\valu \alpha (\lambda)}\) is an integer.
\end{remark}

\subsection{The rank one case}

It remains to prove~\eqref{eq:HeckeLalpha}, which is not only sufficient for Lemmas \ref{lem:rank_reduction} and~\ref{lem:stabilisation_special}, but also necessary for the stabilisation theorem to hold in the semisimple group~\(G_\alpha\) of rank one.  First we reformulate \eqref{eq:HeckeLalpha} in the setting of a finite group.

Pick \(y \in A_S\) with \(-\alpha (y) = r \in \epal \Z\).  Since~\(\UC{y}^{(e)}\) is an open normal subgroup of~\(P_y\) and \(P_y \cap G_\alpha\) is compact, the quotient
\[
L_{\alpha,e,r} \defeq  (P_y \cap G_\alpha) \bigm/ (\UC{y}^{(e)} \cap G_\alpha)
\]
is a finite group.  We let \(\Hecke (L_{\alpha,e,r})\) be its group algebra with coefficients in~\(\ring\).  Given any subset \(X \subseteq L_{\alpha,e,r}\), we put
\[
[X] = \sum_{x \in X} [x] \in \Hecke (L_{\alpha,e,r}).
\]
For subsets \(Y \subseteq P_y \cap G_\alpha\), we define
\begin{equation}
  \label{eq:[Y]}
  [Y] \defeq \bigl[Y\cdot (\UC{y}^{(e)} \cap G_\alpha) \bigm/ (\UC{y}^{(e)} \cap G_\alpha) \bigr].
\end{equation}
Now~\eqref{eq:HeckeLalpha} is implied by
\begin{equation}
  \label{eq:HeckeLer}
  [\Un_{-\alpha,-r}] [\Un_{\alpha,r+}] \in
  [\Un_{-\alpha,-r}] [\Un_{\alpha,r}] \Hecke (L_{\alpha,e,r}).
\end{equation}
Notice that~\(T_\alpha^{(e)}\) disappears, because it is contained in \(\UC{y}^{(e)} \cap G_\alpha\).  We may replace~\(\idem{\Un_{-\alpha,-r}}\) by~\([\Un_{-\alpha,-r}]\) because this is a pro-\(p\)-group and~\(p\)~is invertible in~\(\ring\).  The statement~\eqref{eq:HeckeLer} is stronger than \eqref{eq:HeckeLalpha} because we use a smaller algebra.  The proof of~\eqref{eq:HeckeLer} requires some rather technical preparations.

By \cite{Bruhat-Tits:Reductifs_I}*{6.4.48}, the product
\begin{equation}
  \label{eq:Galphars}
  \Un_{-\alpha,-r} T_\alpha^{(0)} \Un_{\alpha,r+}
\end{equation}
is a group and~\(T_\alpha^{(0)}\) normalises all three factors.  Moreover, each element of this group may be written uniquely as \(\bar{u} t u\) with \(\bar{u} \in \Un_{-\alpha,-r}\), \(t \in T_\alpha^{(0)}\), and \(u \in \Un_{\alpha,r+}\).  In particular, for every pair \((x,\bar{x}) \in \Un_{\alpha,r+} \times \Un_{-\alpha,-r}\) there exist unique \(\bar{u}(x,\bar{x}) \in \Un_{-\alpha,-r}\), \(u (x,\bar{x}) \in \Un_{\alpha,r+}\) and \(t (x,\bar{x}) \in T_\alpha^{(0)}\) such that
\begin{equation}
  \label{eq:xxyty}
  x \bar{x} = \bar{u}(x,\bar{x}) \, t(x,\bar{x}) \, u(x,\bar{x}).
\end{equation}
By the definition of root datum in \cite{Bruhat-Tits:Reductifs_I}*{6.1.1}, for every \(u \in \Un_\alpha \setminus \{1\}\) there are unique \(u', u'' \in \Un_{-\alpha}\) such that \(u' u u'' \eqdef m(u)\) lies in~\(M_\alpha\).  Similarly, for \(\bar u \in \Un_{-\alpha} \setminus \{1\}\) we get unique \(\bar{u}', \bar{u}'' \in \Un_{\alpha}\) with \(\bar{u}' \bar{u} \bar{u}'' = m(\bar{u}) \in M_\alpha\).  Moreover, \cite{Bruhat-Tits:Reductifs_I}*{6.2.1} yields
\begin{equation}
  \label{eq:u''}
  u', u'' \in \Un_{-\alpha,-r} - \Un_{-\alpha, \epal -r} \quad\text{if}\quad
  u \in \Un_{\alpha,r} - \Un_{\alpha, \epal + r}.
\end{equation}
The maps \(u \mapsto u'\) and \(u \mapsto u''\colon \Un_{\alpha} \setminus \{1\} \to \Un_{-\alpha} \setminus \{ 1 \}\) are bijective because \((u')'' = u = (u'')'\).
\begin{example}
  \label{ex:SL2T}
  Let \(G = G_\alpha = \Sl_2 (\F)\) and recall the notation from Example~\ref{ex:SL2U}.  Here
  \[
  T_\alpha^{(e)} = \bigl\{ \stwomatrix{1+x}{0}{0}{(1+x)^{-1}} : x \in \F, \valu (x) > e \bigr\}
  \quad \text{and} \quad
  M_\alpha = \bigl\{ \stwomatrix{0}{x}{-x^{-1}}{0} : x \in \F^\times \bigr\}.
  \]
  If \(u = \stwomatrix{1}{x}{0}{1} \in \Un_{\alpha} \setminus \{1\}\), then \(u' = u'' = \stwomatrix{1}{0}{-x^{-1}}{1}\) and \(m(u) = m(u') = \stwomatrix{0}{x}{-x^{-1}}{0}\).  In this case \eqref{eq:xxyty} is simply the equality
  \[
  \twomatrix{1}{x}{0}{1} \twomatrix{1}{0}{\bar x}{1} =
  \twomatrix{1}{0}{\frac{\bar x}{1+x \bar x}}{1} \twomatrix{1+x \bar x}{0}{0}{(1+ x \bar x)^{-1}}
  \twomatrix{1}{\frac{x}{1+x \bar x}}{0}{1}.
  \]
\end{example}

The group \eqref{eq:Galphars} projects onto the subgroup
\begin{equation}
  \label{eq:factorgroup}
  L^{(0)}_{\alpha,e,r} \defeq (\Un_{-\alpha,-r} T_\alpha^{(0)} \Un_{\alpha,r+}) \bigm/ 
(\UC{y}^{(e)} \cap G_\alpha)
\end{equation}
of~\(L_{\alpha,e,r}\), and the unique decomposition property of~\eqref{eq:Galphars} implies that~\(L^{(0)}_{\alpha,e,r}\) decomposes uniquely as
\[
\bigl( \Un_{-\alpha,-r} / ( \Un_{-\alpha} \cap \UC{y}^{(e)} ) \bigr)\times
\bigl( T_\alpha^{(0)} / T_\alpha^{(e)} \bigr)\times
\bigl( \Un_{\alpha,r+} / ( \Un_{\alpha} \cap \UC{y}^{(e)} ) \bigr).
\]
Moreover, \eqref{eq:xxyty}~remains valid, so that \(t(x,\bar{x}) \in T_\alpha^{(0)} / T_\alpha^{(e)}\) is well-defined for \(x \in \Un_{\alpha,r+} / \bigl( \Un_\alpha \cap \UC{y}^{(e)} \bigr)\) and \(\bar{x} \in \Un_{-\alpha,-r} / \bigl( \Un_{-\alpha} \cap \UC{y}^{(e)} \bigr)\).  A pivotal role will be played by the element
\begin{equation}
  S \defeq \sum_{x,\bar{x}} [t(x,\bar{x})] \in \Hecke \bigl( T_\alpha^{(0)} / T_\alpha^{(e)} \bigr),
\end{equation}
where the sum runs over \(x \in \Un_{\alpha,r+} / \bigl( \Un_\alpha \cap \UC{y}^{(e)} \bigr)\) and \(\bar{x} \in \Un_{-\alpha,-r} / \bigl( \Un_{-\alpha} \cap \UC{y}^{(e)} \bigr)\).

\begin{lem}
  \label{lem:S}
  The element~\(S\) has the following properties:
  \begin{enumerate}[label=\textup{(\alph{*})}]
  \item \(\bigl( [\Un_{-\alpha,-r}] [\Un_{\alpha,r+}] \bigr)^2 = [\Un_{-\alpha,-r}] S [\Un_{\alpha,r+}]\);
  \item \(S\) belongs to the centre of \(\Hecke \bigl(T_\alpha^{(0)} / T_\alpha^{(e)} \bigr)\);
  \item \([\Un_{-\alpha,-r}] S [\Un_{\alpha,r+}] \in [\Un_{-\alpha,-r}] [\Un_{\alpha,r}] \Hecke (L_{\alpha,e,r})\).
  \item If \(l\neq p\) is a prime, then the image of~\(S\) in \(\Hecke(T_\alpha^{(0)} / T_\alpha^{(e)},\Z/l)\) is invertible.
  \item \(S\) is invertible in \(\Hecke \big( T_\alpha^{(0)} / T_\alpha^{(e)}, \Z[\nicefrac1p] \big) \), and hence in \(\Hecke(T_\alpha^{(0)} / T_\alpha^{(e)},\ring)\) for any ring~\(\ring\) in which~\(p\) is invertible.
  \end{enumerate}
\end{lem}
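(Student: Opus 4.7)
The plan is to verify each property in turn, with (a) and (b) as bookkeeping via the Bruhat-type decomposition~\eqref{eq:xxyty}, (c) as the main combinatorial obstacle, and (d)--(e) via character sums and semisimplicity.

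For (a), I would expand
\[
([\Un_{-\alpha,-r}][\Un_{\alpha,r+}])^2 = [\Un_{-\alpha,-r}]\cdot\sum_{x,\bar x}[x\bar x]\cdot[\Un_{\alpha,r+}],
\]
where $(x,\bar x)$ ranges over the prescribed cosets, substitute $x\bar x = \bar u(x,\bar x)\,t(x,\bar x)\,u(x,\bar x)$ from~\eqref{eq:xxyty}, and absorb the outer factors via $[\Un_{-\alpha,-r}]\bar u(x,\bar x) = [\Un_{-\alpha,-r}]$ and $u(x,\bar x)[\Un_{\alpha,r+}] = [\Un_{\alpha,r+}]$, leaving $\sum_{x,\bar x}[\Un_{-\alpha,-r}][t(x,\bar x)][\Un_{\alpha,r+}] = [\Un_{-\alpha,-r}]\,S\,[\Un_{\alpha,r+}]$. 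For (b), applying the uniqueness of~\eqref{eq:xxyty} to the identity $t_0(x\bar x)t_0^{-1} = (t_0 x t_0^{-1})(t_0\bar x t_0^{-1})$ yields $t_0\,t(x,\bar x)\,t_0^{-1} = t(t_0 x t_0^{-1}, t_0\bar x t_0^{-1})$ for every $t_0 \in T_\alpha^{(0)}/T_\alpha^{(e)}$; summing over $(x,\bar x)$, a set permuted by this conjugation, gives $t_0\,S\,t_0^{-1} = S$, so $S$ commutes with every $t_0$ and lies in the centre.

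The hard step is (c). Starting from (a), I would rewrite $([\Un_{-\alpha,-r}][\Un_{\alpha,r+}])^2 = [\Un_{-\alpha,-r}]\,([\Un_{\alpha,r+}][\Un_{-\alpha,-r}])\,[\Un_{\alpha,r+}]$ and reuse the Bruhat decomposition on the middle product so as to reindex the sum by a coset system for $\Un_{\alpha,r+}\subset\Un_{\alpha,r}$. The key input is the bijection $u\mapsto u''$ from~\eqref{eq:u''}, which pairs non-trivial cosets of $\Un_{\alpha,r+}$ in $\Un_{\alpha,r}$ with cosets of $\Un_{-\alpha,-r+\epal}$ in $\Un_{-\alpha,-r}$, allowing the missing factor $\Un_{\alpha,r}/\Un_{\alpha,r+}$ to be absorbed into the already-present $[\Un_{-\alpha,-r}]$ after a change of variables. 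This is the main obstacle and requires careful tracking of how the data in~\eqref{eq:xxyty} varies as $(x,\bar x)$ crosses filtration jumps.

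For (d) and (e), note that $T\defeq T_\alpha^{(0)}/T_\alpha^{(e)}$ is a finite abelian $p$-group (since $T_\alpha$ is the torus of a rank-one group and $T_\alpha^{(0)}$ is pro-$p$). Over $\Z/l$ with $l\neq p$, the group algebra $\Hecke(T,\Z/l)$ is commutative and semisimple by Maschke, so invertibility of $S$ reduces to $\chi(S)\neq 0 \in \bar{\F}_l$ for every character $\chi$. The trivial character evaluates $S$ to the cardinality of the indexing set, a $p$-power. For a non-trivial $\chi$, the explicit formula for $t(x,\bar x)$ (cf.~Example~\ref{ex:SL2T}) exhibits $(x,\bar x)\mapsto t(x,\bar x)$ as descending to a bi-additive pairing on the graded pieces of the filtration, so $\chi(S)$ factors as a product of classical Gauss-type character sums, each of $p$-power absolute value and hence a unit mod $l$. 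For (e), $\Z[\nicefrac1p][T]$ is semisimple by Maschke (as $|T|$ is a power of $p$), $S$ is a central element whose reduced norm lies in $\Z[\nicefrac1p]$ and is a unit modulo every prime $l\neq p$ by~(d); therefore it is a unit in $\Z[\nicefrac1p]$, giving invertibility of $S$, which is preserved by base change to any ring in which $p$ is invertible.
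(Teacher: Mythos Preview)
Your treatment of (a), (b), and (e) is essentially the paper's, and is fine. The real content is in (c) and (d), and there your outline has genuine gaps.

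\textbf{Part (c).} Your plan is to ``reuse the Bruhat decomposition on the middle product'' $[\Un_{\alpha,r+}][\Un_{-\alpha,-r}]$ and ``reindex by a coset system for $\Un_{\alpha,r+}\subset\Un_{\alpha,r}$''. But applying~\eqref{eq:xxyty} to this middle product only ever produces elements of $\Un_{\alpha,r+}$, $T_\alpha^{(0)}$, and $\Un_{-\alpha,-r}$; there is no mechanism by which the larger group $\Un_{\alpha,r}$ enters, and the bijection $u\mapsto u''$ alone does not supply one. The paper's argument is different: it starts from the \emph{triple} product $[\Un_{-\alpha,-r}][\Un_{\alpha,r}][\Un_{-\alpha,-r}]$ (note $\Un_{\alpha,r}$, not $\Un_{\alpha,r+}$), splits $\Un_{\alpha,r}$ as $\Un_{\alpha,r+}\sqcup(\Un_{\alpha,r}\setminus\Un_{\alpha,r+})$, and for $u$ in the complement uses the Weyl-type element $m(u)=u'uu''\in M_\alpha$ to rewrite $\bar u_1 u\bar u_2 = (\text{something in }\Un_{-\alpha,-r}\Un_{\alpha,r})\cdot m(u)$. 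This shows the complement piece already lies in $[\Un_{-\alpha,-r}][\Un_{\alpha,r}]\Hecke(L_{\alpha,e,r})$, hence so does $[\Un_{-\alpha,-r}][\Un_{\alpha,r+}][\Un_{-\alpha,-r}]$; right-multiplying by $[\Un_{\alpha,r+}]$ and invoking (a) finishes. The appearance of $m(u)$ is the key step you are missing.

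\textbf{Part (d).} Two problems. First, you assert that $T_\alpha^{(0)}/T_\alpha^{(e)}$ is abelian; the paper does not claim this, and indeed the non-trivial proof of (b) is there precisely because centrality of $S$ is not automatic. Second, and more seriously, the claim that $(x,\bar x)\mapsto t(x,\bar x)$ ``descends to a bi-additive pairing on graded pieces'' so that $\chi(S)$ factors into Gauss sums is unsubstantiated. Even in the $\Sl_2$ example, $t(x,\bar x)$ corresponds to $1+x\bar x$, which is not bi-additive, and the resulting character sums $\sum_{x,\bar x}\chi(1+x\bar x)$ are not Gauss sums in any standard sense; for general $G_\alpha$ you have no explicit formula at all. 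The paper's argument avoids all of this: if $Sf=0$ then (a) and (b) give $\bigl([\Un_{-\alpha,-r}]f[\Un_{\alpha,r+}]\bigr)^2=0$, so this element is nilpotent and the trace of left convolution by it on $\Hecke(L^{(0)}_{\alpha,e,r},\Z/l)$ vanishes; but that trace equals $\lvert L^{(0)}_{\alpha,e,r}\rvert\cdot f(1)$, and $\lvert L^{(0)}_{\alpha,e,r}\rvert$ is a $p$-power, forcing $f(1)=0$. Translating by $[t]$ gives $f\equiv 0$. This trace argument is uniform in $G_\alpha$ and requires no structural information about $t(x,\bar x)$ beyond what (a) and (b) already encode.
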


\begin{proof}
  The decomposition in~\eqref{eq:xxyty} yields
  \begin{multline}
    \bigl( [\Un_{-\alpha,-r}] [\Un_{\alpha,r+}] \bigr)^2 = [\Un_{-\alpha,-r}]
    \sum_{x,\bar{x}} [\bar{u}(x,\bar{x}) \, t(x,\bar{x}) \, u(x,\bar{x}) ] [\Un_{\alpha,r+}] \\
    = [\Un_{-\alpha,-r}] \sum_{x,\bar{x}} [t(x,\bar{x}) ] [\Un_{\alpha,r+}] =
    [\Un_{-\alpha,-r}] S [\Un_{\alpha,r+}].
  \end{multline}
  This establishes~(a).

  For (b), we recall that~\(T_\alpha^{(0)}\) normalises~\(\Un_{\pm \alpha,s}\) for all \(s \in \R\).  So for all \(t \in T_\alpha^{(0)}\)
  \begin{multline}
    [\Un_{-\alpha,-r}] t S t^{-1} [\Un_{\alpha,r+}]
    = t [\Un_{-\alpha,-r}] S [\Un_{\alpha,r+}] t^{-1}
    = t \bigl( [\Un_{-\alpha,-r}] [\Un_{\alpha,r+}] \bigr)^2 t^{-1}
    \\= \bigl( [\Un_{-\alpha,-r}] [\Un_{\alpha,r+}] \bigr)^2
    = [\Un_{-\alpha,-r}] S [\Un_{\alpha,r+}].
  \end{multline}
  Together with the unique decomposition property for \(L^{(0)}_{\alpha,e,r}\), this implies \(t S t^{-1} = S\) for all \(t \in T_\alpha^{(0)}\), which is equivalent to~\(S\) being central in \(\Hecke \bigl( T_\alpha^{(0)} / T_\alpha^{(e)} \bigr)\).

  To prove~(c), we begin with the trivial equality
  \begin{multline}
    \label{eq:UUU}
    [\Un_{-\alpha,-r}] [\Un_{\alpha,r}] [\Un_{-\alpha,-r}]
    = [\Un_{-\alpha,-r}] [\Un_{\alpha, r+}] [\Un_{-\alpha,-r}]
    \\+ [\Un_{-\alpha,-r}] [\Un_{\alpha,r} \setminus \Un_{\alpha, r+}] [\Un_{-\alpha,-r}].
  \end{multline}
  We claim that the second summand equals
  \begin{equation}
    \label{eq:UUM}
    [\Un_{-\alpha,-r}] [\Un_{\alpha,r}] \sum_{u \in (\Un_{\alpha,r} - \Un_{\alpha, r+}) /
      (\Un_\alpha \cap \UC{y}^{(e)}) } [m(u)].
  \end{equation}
  Indeed, we may rewrite any \(\bar u_1 u \bar u_2 \in \Un_{-\alpha,-r} \bigl(\Un_{\alpha,r} \setminus \Un_{\alpha, r+} \bigr) \Un_{-\alpha,-r}\) as
  \begin{multline*}
    \bar u_1 u \bar u_2
    = \bar{u}_1 (u')^{-1} u' u u'' (u'')^{-1} \bar{u}_2
    = \bar{u}_1 (u')^{-1} m(u) (u'')^{-1} \bar{u}_2
    \\= \bar{u}_1 (u')^{-1} \bigl( m(u) (u'')^{-1} \bar{u}_2 m(u)^{-1} \bigr) m(u).
  \end{multline*}
  For fixed~\(u\) these elements run precisely once through \(\Un_{-\alpha,-r} \Un_{\alpha,r} m(u)\) when \(\bar u_1\) and~\(\bar u_2\) run through~\(\Un_{-\alpha,-r}\), proving~\eqref{eq:UUM}.  Subtracting~\eqref{eq:UUM} from~\eqref{eq:UUU} yields
  \begin{equation}
    \label{eq:3.10}
    [\Un_{-\alpha,-r}] [\Un_{\alpha, r+}] [\Un_{-\alpha,-r}]
    \in [\Un_{-\alpha,-r}] [\Un_{\alpha,r}] \Hecke (L_{\alpha,e,r}).
  \end{equation}
  Finally, we multiply this element from the right with \([\Un_{\alpha, r+}]\) and use~(a) to get~(c).

  To establish~(d), we must show that the operator of multiplication by the central element~\(S\) is invertible on \(\Hecke(T_\alpha^{(0)} / T_\alpha^{(e)},\Z/l)\).  Since the latter is a finite-dimensional vector space over~\(\Z/l\), it suffices to prove injectivity.  Assume \(Sf=0\).  Then also \(Sf^2=0\) and hence
  \begin{multline*}
    0 = [\Un_{-\alpha,-r}] Sf^2 [\Un_{\alpha,r+}]
    = [\Un_{-\alpha,-r}] S [\Un_{\alpha,r+}] f^2
    \\= \bigl([\Un_{-\alpha,-r}] [\Un_{\alpha,r+}]\bigr)^2 f^2
    = \bigl([\Un_{-\alpha,-r}] f [\Un_{\alpha,r+}]\bigr)^2.
  \end{multline*}
  This computation in \(\Hecke(L^{(0)}_{\alpha,e,r},\Z/l)\) uses that \([\Un_{\alpha,r+}]\) and \([\Un_{-\alpha,-r}]\) commute with all elements of \(\Hecke(T_\alpha^{(0)},\Z/l)\).  Since \([\Un_{-\alpha,-r}] f [\Un_{\alpha,r+}]\) is nilpotent, the operator of left convolution with \([\Un_{-\alpha,-r}] f [\Un_{\alpha,r+}]\) on \(\Hecke(L^{(0)}_{\alpha,e,r},\Z/l)\) has vanishing trace.  This trace is~\(\abs{L^{(0)}_{\alpha,e,r}}\) times the coefficient of \([\Un_{-\alpha,-r}] f [\Un_{\alpha,r+}]\) at~\(1\).  By the unique decomposition property of \(L^{(0)}_{\alpha,e,r}\), this coefficient at~\(1\) is equal to the coefficient of~\(f\) itself at~\(1\).  Since~\(L^{(0)}_{\alpha,e,r}\) is a \(p\)\nb-group, its order is invertible modulo~\(l\).  Thus we get \(f(1)=0\).

  Since the condition \(Sf=0\) defines a right ideal, the same reasoning may be applied to~\(f [t]\), where \(t\in T_\alpha^{(0)} / T_\alpha^{(e)}\).  We get \(0 = (f [t])(1) = f(t^{-1})\).  Thus \(f=0\), that is, multiplication by~\(S\) is invertible on \(\Hecke(L^{(0)}_{\alpha,e,r},\Z/l)\).  This finishes the proof of~(d).

  For~(e), consider the operator of multiplication by~\(S\) on \(\Hecke \big( T_\alpha^{(0)} / T_\alpha^{(e)},\Z[\nicefrac1p] \big)\).  Since the latter is a finite-dimensional free \(\Z[\nicefrac1p]\)-module, Cramer's rule yields the invertibility of~\(S\) if the determinant of this map is invertible in \(\Z[\nicefrac1p]\), that is, not divisible by any prime \(l\neq p\).  But if~\(l\) would divide the determinant of this map, then multiplication by~\(S\) on \(\Hecke (T_\alpha^{(0)} / T_\alpha^{(e)},\Z/l)\) would not be invertible, contradicting~(d).  Hence~\(S\) is invertible in \(\Hecke \big( T_\alpha^{(0)} / T_\alpha^{(e)},\Z[\nicefrac1p] \big)\), establishing~(e).
\end{proof}

Finally, we are able to prove our main results.  Since~\(T_\alpha^{(0)}\) normalises~\(\Un_{\alpha,s}\) for all~\(s\), we have \([\Un_{\alpha,r+}] S = S[\Un_{\alpha,r+}]\).  Since~\(S\) is invertible, we also get \([\Un_{\alpha,r+}] = S[\Un_{\alpha,r+}]S^{-1}\).  Lemma~\ref{lem:S}.c yields
\[
[\Un_{-\alpha,-r}] [\Un_{\alpha,r+}] = [\Un_{-\alpha,-r}] S [\Un_{\alpha,r+}] S^{-1}
\in [\Un_{-\alpha,-r}] [\Un_{\alpha,r}] \Hecke (L_{\alpha,e,r}).
\]
This establishes~\eqref{eq:HeckeLer}.  We already observed that~\eqref{eq:HeckeLer} implies~\eqref{eq:HeckeLalpha} and that this finishes the proofs of our main theorems, the Stabilisation Theorem~\ref{the:stabilisation_general} and the Second Adjointness Theorem~\ref{the:second_adjointness}.
\vspace{4mm}

\section{Consequences of Second Adjointness}
\label{sec:consequences}

The Second Adjointness Theorem has many noteworthy consequences.  Several of these are due to Jean-Fran\c{c}ois Dat~\cite{Dat:Finitude}.  With our proof of the Second Adjointness Theorem, they have become valid in greater generality.

As before, we work in the category of smooth representations on \(\ring\)\nb-modules, where~\(\ring\) is a unital ring in which~\(p\) is invertible.  We call a representation of~\(G\) on an \(\ring\)\nb-module \emph{projective}, \emph{finitely generated}, or \emph{finitely presented} if it is finitely generated or presented as a module over \(\Hecke(G)\).

\begin{lem}
  \label{lem:preserve-finite-type}
  The functors \(\Jaci_P^G\) and~\(\Jacr_G^P\)
  \begin{enumerate}[label=\textup{(\alph{*})}]
  \item are exact and commute with arbitrary colimits;
  \item preserve projective representations;
  \item preserve finitely generated representations;
  \item preserve finitely presented representations;
  \item preserve the property that a representation~\(V\) is generated by~\(V^K\) for some open subgroup~\(K\).
  \end{enumerate}
\end{lem}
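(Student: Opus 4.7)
The plan is to deduce everything from the bi-adjointness that the First and Second Adjointness Theorems put in place. The First Adjointness formula~\eqref{eq:first_adjointness} gives $\Jacr_G^P \dashv \Jaci_P^G$, and Theorem~\ref{the:second_adjointness} applied to~$P$ and to~$\opp{P}$ yields $\Jaci_P^G \dashv \Jacr_G^{\opp{P}}$ and $\Jaci_{\opp{P}}^G \dashv \Jacr_G^P$.  Each of the two functors under study is therefore simultaneously a left and a right adjoint, hence preserves arbitrary colimits and arbitrary limits, and in particular is exact.  This settles~(a).

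For~(b) I would invoke the standard principle that whenever $F \dashv G$ and $G$ is exact, the functor $F$ preserves projectives, because $\Hom(F(P),-) \cong \Hom(P, G(-))$ is then exact for projective~$P$; exactness of the right adjoints was established in~(a).  For~(c) and~(d) I would use the characterisations of finitely generated (respectively, finitely presented) objects in terms of $\Hom(V,-)$ preserving directed unions of subobjects (respectively, arbitrary filtered colimits), and transport them via the adjunction isomorphism $\Hom(F(V),-) \cong \Hom(V, G(-))$.  This works because each of the relevant right adjoints is itself a bimodule tensor product by Proposition~\ref{pro:Jacquet_functors} and therefore commutes with all colimits.

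For~(e) I would first reduce to a cyclic situation.  If $W$ is generated by $W^L$ for some compact open $L \subseteq M$, then $W$ is a quotient of $\bigoplus_{w \in W^L} \Ccinf(M/L)$, so by exactness and preservation of colimits, $\Jaci_P^G(W)$ is a quotient of $\bigoplus_w \Ccinf(G/UL)$; it then suffices to exhibit one compact open $K \subseteq G$, depending only on~$L$ and~$P$, for which $\Ccinf(G/UL)$ is generated by $\Ccinf(G/UL)^K$.  Taking $K$ in good position with $K \cap M \subseteq L$, the $K$\nb-invariant function $\mathbf{1}_{KUL/UL}$ together with its $\Hecke(G)$\nb-translates should generate $\Ccinf(G/UL)$.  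For $\Jacr_G^P$, if $V$ is generated by $V^K$, then $\Jacr_G^P(V)$ is a quotient of copies of $\Jacr_G^P(\Ccinf(G/K)) \cong \Ccinf(U\backslash G/K)$; here I would use Theorem~\ref{thm:7}, now available since the Stabilisation Theorem has been proved, to see that the natural map $V^K \to \Jacr_G^P(V)^{K \cap M}$ is surjective, so that the image of $V^K$ generates $\Jacr_G^P(V)$ as an $M$\nb-module.

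The main obstacle is~(e).  Parts (a)--(d) are pure formalism once the bi-adjointness is in hand and the right adjoints are recognised as tensor products.  In~(e), however, we must produce an explicit compact open subgroup witnessing generation by invariants in the image: for $\Jacr_G^P$ this follows rather directly from Theorem~\ref{thm:7}, but for $\Jaci_P^G$ one needs a hands-on verification that $\Hecke(G) \cdot \mathbf{1}_{KUL/UL} = \Ccinf(G/UL)$, which rests on Bruhat--Tits combinatorics to decompose arbitrary compact open subsets of $G/UL$ into pieces that are reachable from $\mathbf{1}_{KUL/UL}$ under the Hecke action.
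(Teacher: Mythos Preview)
Your treatment of (a)--(d) matches the paper's proof almost verbatim, with one pleasant variation: for~(a) the paper establishes exactness directly from the explicit descriptions of the functors and then observes that exact additive functors commuting with direct sums commute with all colimits, whereas you deduce exactness (and all (co)limit preservation) in one stroke from the fact that each functor is simultaneously a left and a right adjoint. Both routes are valid; yours is cleaner but relies on the Second Adjointness Theorem already for~(a), while the paper's argument for~(a) is independent of it.

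For~(e) the paper does not give an argument at all but simply cites \cite{Dat:Finitude}*{Lemme 4.6} and \cite{Meyer-Solleveld:Characters_growth}*{Proposition 5.8}. Your sketch is in the right spirit, but note a genuine gap in the \(\Jacr_G^P\) half: Theorem~\ref{thm:7} indeed gives surjectivity of \(V^K \to \Jacr_G^P(V)^{K\cap M}\), but that alone does not show that the \((K\cap M)\)-invariants \emph{generate} \(\Jacr_G^P(V)\) over \(\Hecke(M)\). After your reduction to \(\Jacr_G^P(\Ccinf(G/K)) \cong \Ccinf(U\backslash G/K)\), you still need to prove that this module is generated by its \((K\cap M)\)-invariants, and that is precisely the nontrivial content of the cited Meyer--Solleveld result (which does use building combinatorics, as you anticipate). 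Your reduction for \(\Jaci_P^G\) to a single cyclic generator of \(\Ccinf(G/UL)\) is similarly on the right track but also requires the Iwasawa-type decomposition arguments from those references to finish.
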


\begin{proof}
  It is easy to see that both \(\Jaci_P^G\) and~\(\Jacr_G^P\) are exact and commute with direct sums.  This implies that they commute with arbitrary colimits.

  Let \(F\colon \Cat\to\Cat'\) be a functor between two Abelian categories with an exact right adjoint functor~\(G\).  If~\(P\) is projective in~\(\Cat\), then \(Y\mapsto \Cat'(F(P),Y) \cong \Cat(P,G(Y))\) is an exact functor on~\(\Cat'\), that is, \(F(P)\) is projective in~\(\Cat'\).  By the First and Second Adjointness Theorem, the functors \(\Jaci_P^G\) and~\(\Jacr_G^P\) have the right adjoint functors \(\Jacr_G^P\) and~\(\Jaci_{\tilde{P}}^G\), respectively.  Since these are both exact by~(a) and the two adjointness theorems, we get~(b) for both functors.

  Finitely generated modules may be described categorically: a module~\(X\) over \(\Hecke(G)\) or \(\Hecke(M,\ring)\) is finitely generated if and only if for every increasing net of submodules~\((Y_i)\) of a module~\(Y\) with \(Y=\bigcup Y_i\), we have
  \begin{equation}
    \label{eq:fg_categorical}
    \Hom\bigl(X,\bigcup Y_i\bigr) = \bigcup \Hom(X,Y_i).
  \end{equation}
  A functor between module categories with a right adjoint preserves property~\eqref{eq:fg_categorical} provided its adjoint maps injective maps to injective maps and preserves unions.  Since the right adjoints of \(\Jaci_P^G\) and~\(\Jacr_G^P\) have these properties by~(a), we get~(c).

  A module~\(X\) is finitely presented if and only if the functor \(Y\mapsto \Hom(X,Y)\) commutes with arbitrary inductive limits (also called filtered colimits).  Thus~(a) and the two adjointness theorems imply~(d).

  (e)~is proved in \cite{Dat:Finitude}*{Lemme 4.6} and (without using second adjointness) in
\cite{Meyer-Solleveld:Characters_growth}*{Proposition 5.8}.
\end{proof}

Let~\(\widetilde V\) be the contragredient representation of~\(V\), that is, the smooth part of the algebraic dual \(\Hom_\ring (V,\ring)\).  It is easily seen that
\begin{equation}
  \label{eq:hom-dual}
  \Hom_G(Y,\widetilde{V}) \cong \Hom_G(V,\widetilde{Y})
  \qquad \text{for all \(V,Y \in \Mod{\Hecke(G)}\),}
\end{equation}
while \cite{Vigneras:l-modulaires}*{II.2.1.vi} yields
\begin{equation}
  \label{eq:induction-dual}
  \widetilde{\Jaci_P^G (W)} \cong \Jaci_P^G (\widetilde{W})
  \qquad \text{for all \(W \in \Mod{\Hecke(M)}\).}
\end{equation}

\begin{theorem}
  \label{thm:Jacquet-restriction-dual}
  There is a natural isomorphism \(\widetilde{\Jacr_G^P (V)} \cong \Jacr_G^{\opp{P}} (\widetilde V)\).
\end{theorem}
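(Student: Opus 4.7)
The plan is to identify both sides with the same representable functor on \(\Mod{\Hecke(M)}\) and then invoke the Yoneda lemma. The four ingredients available are the two adjointness theorems (First and Second), the contragredient swap~\eqref{eq:hom-dual} for both~\(G\) and~\(M\), and the compatibility~\eqref{eq:induction-dual} of parabolic induction with contragredients.

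Concretely, for any smooth \(\Hecke(M)\)-module~\(W\), I would chain together the following natural isomorphisms:
\begin{align*}
\Hom_M\bigl(W,\widetilde{\Jacr_G^P(V)}\bigr)
&\cong \Hom_M\bigl(\Jacr_G^P(V),\widetilde W\bigr) && \text{by \eqref{eq:hom-dual} for \(M\),}\\
&\cong \Hom_G\bigl(V,\Jaci_P^G(\widetilde W)\bigr) && \text{by First Adjointness \eqref{eq:first_adjointness},}\\
&\cong \Hom_G\bigl(V,\widetilde{\Jaci_P^G(W)}\bigr) && \text{by \eqref{eq:induction-dual},}\\
&\cong \Hom_G\bigl(\Jaci_P^G(W),\widetilde V\bigr) && \text{by \eqref{eq:hom-dual} for \(G\),}\\
&\cong \Hom_M\bigl(W,\Jacr_G^{\opp{P}}(\widetilde V)\bigr) && \text{by Second Adjointness (Theorem~\ref{the:second_adjointness}).}
\end{align*}
Each of these isomorphisms is natural in~\(W\), so the composite is a natural isomorphism of functors \(\Mod{\Hecke(M)}^{\textup{op}}\to\Mod{\ring}\). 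The Yoneda lemma then produces a unique natural isomorphism \(\widetilde{\Jacr_G^P(V)} \cong \Jacr_G^{\opp{P}}(\widetilde V)\) of smooth \(\Hecke(M)\)-modules realising the chain above, and this isomorphism is automatically natural in~\(V\) because every step in the chain is.

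There is essentially no hard step here: the proof is a formal consequence of the two adjointnesses and~\eqref{eq:induction-dual}, and the only thing to be careful about is that the modular twists appearing in Proposition~\ref{pro:Jacquet_functors} match on both sides. That match is built into the statement \(\delta_P^{-\nicefrac12}=\delta_{\opp P}^{\nicefrac12}\) already used in Section~\ref{sec:second_adjointness_formulate}, so no further bookkeeping is needed. The substantive input, of course, is the Second Adjointness Theorem itself; once that is available, the contragredient statement drops out by pure abstract nonsense.
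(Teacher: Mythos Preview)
Your proof is correct and follows exactly the same five-step chain of natural isomorphisms as the paper's proof, concluding with the Yoneda lemma. The only difference is presentational: you annotate each step explicitly and remark on naturality in~\(V\), which the paper leaves implicit.
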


\begin{proof}
  Equations \eqref{eq:hom-dual} and~\eqref{eq:induction-dual} and the First and Second Adjointness Theorems yield natural isomorphisms
  \begin{multline*}
    \Hom_M \bigl( W,\widetilde{\Jacr_G^P (V)} \bigr) \cong \Hom_M \bigl( \Jacr_G^P (V), \widetilde W \bigr)
    \cong \Hom_G \bigl( V,\Jaci_P^G (\widetilde W) \bigr)\\
    \cong \Hom_G \bigl( V,\widetilde{\Jaci_P^G (W)} \bigr) \cong \Hom_G \bigl( \Jaci_P^G (W),
    \widetilde{V} \bigr) \cong \Hom_M \bigl( W, \Jacr_G^{\opp{P}} (\widetilde V) \bigr).
  \end{multline*}
  Since this holds for all \(W \in \Mod{\Hecke(M)}\), the required natural isomorphism exists by the Yoneda Lemma.
\end{proof}

The isomorphism in Theorem~\ref{thm:Jacquet-restriction-dual} is described explicitly in \cite{Bernstein:Second_adjointness} and \cite{Bushnell:Localization_Hecke}*{Section 5}.  The Second Adjointness Theorem follows from Theorem~\ref{thm:Jacquet-restriction-dual} --~that was Bernstein's strategy in~\cite{Bernstein:Second_adjointness}.  Namely, a rearrangement of the above proof shows that
\[
\Hom_M \bigl(W, \Jacr_G^{\opp{P}} (\widetilde V)\bigr) \cong \Hom_G \bigl(\Jaci_P^G (W),\widetilde{V}\bigr)
\]
and then Bernstein uses a trick to replace~\(\widetilde{V}\) by an arbitrary smooth representation.

A \(\Hecke(G)\)-module~\(V\) is called \emph{locally Noetherian} if all \(\Hecke(G/\!\!/K)\)-submodules of~\(V^K\) are finitely generated, for each compact open subgroup~\(K\).

\begin{theorem}
  \label{thm:Hecke-algebra-Noetherian}
  Suppose that~\(\ring\) is a unital Noetherian \(\Z[\nicefrac1p]\)-algebra.
  \begin{enumerate}[label=\textup{(\alph{*})}]
  \item Every finitely generated smooth \(G\)\nb-representation is locally Noetherian.
  \item The algebra \(\Hecke(G/\!\!/K)\) is Noetherian, for every open subgroup \(K \subseteq G\).
  \end{enumerate}
\end{theorem}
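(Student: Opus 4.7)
The plan is to deduce (b) from (a) and to prove (a) by induction on the semisimple rank of~$G$, using the Second Adjointness Theorem together with Theorem~\ref{thm:7} as a bridge between $V^K$ and the Jacquet restriction at a proper Levi.

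For the reduction (a)$\Rightarrow$(b), I would apply (a) to $V \defeq \Hecke(G)\idem{K}$, which is cyclic and hence finitely generated. Since $V^K = \idem{K}\Hecke(G)\idem{K} = \Hecke(G/\!\!/K)$ as left $\Hecke(G/\!\!/K)$\nb-modules, left ideals of $\Hecke(G/\!\!/K)$ correspond exactly to $\Hecke(G/\!\!/K)$\nb-submodules of~$V^K$, and local Noetherianity of~$V$ forces $\Hecke(G/\!\!/K)$ to be left Noetherian.

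For (a), I would induct on the semisimple rank~$d$ of~$G$. In the base case $d=0$, the group~$G$ is compact modulo its centre $Z(G)$, so $\Hecke(G/\!\!/K)$ is a finitely generated module over $\ring[Z(G)/(Z(G)\cap K)]$; the latter is Noetherian by Hilbert's Basissatz, and a straightforward compactness argument shows that for any finitely generated smooth~$V$ the invariant space~$V^K$ is finitely generated over $\Hecke(G/\!\!/K)$, hence Noetherian. For the inductive step, fix a finitely generated~$V$, a proper parabolic $P=MU$, a compact open pro-$p$ subgroup~$K$ in good position with respect to $\{P,\opp P\}$, and $\mu\in Z(M)$ strictly positive for $(P,M)$. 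By Lemma~\ref{lem:preserve-finite-type}(c), $\Jacr_G^P(V)$ is finitely generated over $\Hecke(M)$, so the induction hypothesis yields Noetherianity of $\Jacr_G^P(V)^{K\cap M}$ over $\Hecke(M/\!\!/(K\cap M))$. The Stabilisation Theorem~\ref{the:stabilisation_general} supplies $n\in\N$ for which $\pi\idem{K\mu^n K}$ is stable on~$V$, so Theorem~\ref{thm:7} decomposes
\[
V^K \;=\; \ker p_U^K \,\oplus\, V_*^K, \qquad
V_*^K \;\xrightarrow[\cong]{\,p_U\,}\; \Jacr_G^P(V)^{K\cap M}.
\]
Every $\Hecke(G/\!\!/K)$\nb-submodule of~$V_*^K$ is preserved by the $\Hecke(M/\!\!/(K\cap M))$\nb-action pulled back along~$p_U$, so Noetherianity transfers to~$V_*^K$. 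Running the symmetric argument with~$\opp P$ in place of~$P$ and intersecting across all proper parabolics leaves only the ``supercuspidal'' part of~$V$, on which $\Jacr_G^P$ vanishes for every proper~$P$; a finitely generated representation of this type is admissible, so the base-case argument finishes the induction.

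The main obstacle is the Hecke-action compatibility underlying the transfer along~$p_U$: proving that the $\Hecke(M/\!\!/(K\cap M))$\nb-module structure on the right-hand side restricts the $\Hecke(G/\!\!/K)$\nb-module structure on~$V_*^K$, so that Noetherianity passes cleanly. The relevant ``Bernstein restriction map'' is implicit in Second Adjointness, but extracting it requires a careful analysis of how convolution in $\Hecke(G/\!\!/K)$ interacts with $\idem{K\mu^n K}$ and with the splitting in Theorem~\ref{thm:7}. A secondary difficulty will be cleanly isolating the supercuspidal piece of a finitely generated~$V$ and establishing its admissibility, for which I would lean on finite presentation (Lemma~\ref{lem:preserve-finite-type}(d)) together with geometric bounds on double cosets in the semisimple building.
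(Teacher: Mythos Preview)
The paper does not actually prove this theorem; its entire proof reads ``See Proposition~4.3 and Corollaire~4.4 in~\cite{Dat:Finitude},'' followed by the remark that the result is ``quite hard to prove, even with the Second Adjointness Theorem available.'' So there is nothing in the paper to compare your argument against; what you have written is a genuine attempt where the paper offers only a citation. Your overall strategy---induction on semisimple rank via Jacquet modules---is in the spirit of Dat's approach, and your reduction (a)$\Rightarrow$(b) is fine.

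The inductive step, however, has a structural gap that is more serious than you indicate. The decomposition \(V^K = \ker p_U^K \oplus V_*^K\) of Theorem~\ref{thm:7} is only an \(\ring\)\nb-module splitting, not an \(\Hecke(G/\!\!/K)\)\nb-module splitting: neither summand is stable under the full Hecke action, since \(V(U)\) is only \(P\)\nb-stable and \(\idem{K\mu^nK}\) does not commute with general elements of \(\Hecke(G/\!\!/K)\). So the phrase ``every \(\Hecke(G/\!\!/K)\)\nb-submodule of \(V_*^K\)'' is not well-posed, and Noetherianity of \(\Jacr_G^P(V)^{K\cap M}\) over \(\Hecke(M/\!\!/(K\cap M))\) does not by itself constrain the \(\Hecke(G/\!\!/K)\)\nb-submodule lattice of \(V^K\). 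Bridging this requires substantially more than a compatibility of actions: Dat uses a delicate filtration argument and control over the Bernstein centre, not the splitting of Theorem~\ref{thm:7} alone.

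Your endgame is also circular. The claim that a finitely generated representation with all proper Jacquet modules zero is admissible is essentially uniform admissibility; over \(\C\) this is Bernstein's deep theorem, and over general Noetherian \(\Z[\nicefrac1p]\)\nb-algebras it is part of what Dat proves in the very paper being cited. You cannot invoke it as a black box in a proof meant to replace that citation.
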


\begin{proof}
  See Proposition~4.3 and Corollaire~4.4 in~\cite{Dat:Finitude}.
\end{proof}

It is quite hard to prove Theorem~\ref{thm:Hecke-algebra-Noetherian}, even with the Second Adjointness Theorem available.

\begin{theorem}[\cite{Dat:Finitude}*{Corollaire 4.5}]
  \label{thm:modG-Noetherian}
  Suppose that~\(\ring\) is a unital Noetherian \(\Z[\nicefrac1p]\)-algebra.  The category \(\Mod{\Hecke(G)}\) is Noetherian, in the sense that every submodule of a finitely generated module is again finitely generated.  Equivalently, all finitely generated smooth modules are finitely presented.
\end{theorem}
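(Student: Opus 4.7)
The plan is to deduce the statement from Theorem~\ref{thm:Hecke-algebra-Noetherian} following the strategy of~\cite{Dat:Finitude}. The equivalence of the two formulations is a standard abelian-category fact: if every submodule of a finitely generated module is finitely generated, then given a surjection $F \twoheadrightarrow V$ with $F$ a finite direct sum of the projective generators $\Hecke(G)\idem{K} = \Ccinf(G/K,\ring)$, the kernel is finitely generated, yielding a finite presentation; the converse follows from the closure of finitely presented modules under quotients by finitely generated submodules.

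So let $V$ be a finitely generated smooth $G$\nb-representation and $W \subseteq V$ a submodule. First I would choose a compact open pro\nb-$p$\nb-subgroup $K$ small enough to fix a finite generating set of $V$, so that $V = \Hecke(G) \cdot V^K$. By Theorem~\ref{thm:Hecke-algebra-Noetherian}(a), $V$~is locally Noetherian, hence $V^{K'}$ is a finitely generated $\Hecke(G/\!\!/K')$\nb-module for every compact open $K' \subseteq K$. By Theorem~\ref{thm:Hecke-algebra-Noetherian}(b), that Hecke algebra is itself Noetherian, so $W^{K'} \subseteq V^{K'}$ is finitely generated as an $\Hecke(G/\!\!/K')$\nb-module for each such $K'$.

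The main obstacle is to promote finite generation of $W^{K'}$ at every fixed level $K'$ to finite generation of the global smooth module $W$ over $\Hecke(G)$. It suffices to exhibit a single sufficiently small $K'$ with $W = \Hecke(G) \cdot W^{K'}$, since then a finite $\Hecke(G/\!\!/K')$\nb-generating set for $W^{K'}$ is a finite $\Hecke(G)$\nb-generating set for $W$. Finding such $K'$ is exactly the point where Dat's argument uses the Second Adjointness Theorem, via induction on the semisimple rank of~$G$: the exactness of the Jacquet functors (Lemma~\ref{lem:preserve-finite-type}(a)), their preservation of finitely generated representations (Lemma~\ref{lem:preserve-finite-type}(c)), and their preservation of the property of being generated by a fixed-point subspace (Lemma~\ref{lem:preserve-finite-type}(e)), together with Theorem~\ref{thm:Jacquet-restriction-dual}, allow one to reduce the question to the analogous statement for the proper Levi subgroups appearing via parabolic descent. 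At the base of the induction, when $G$ has no proper parabolic subgroup, the smooth representation category is controlled at each level by the Hecke algebras $\Hecke(G/\!\!/K)$, and the result follows directly from Theorem~\ref{thm:Hecke-algebra-Noetherian}(b). Both hypotheses are unconditional in our generality because Section~\ref{sec:stabilisation} has established the Second Adjointness Theorem, on which all of~\cite{Dat:Finitude}*{Section~4} rests.
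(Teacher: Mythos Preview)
You correctly isolate the only nontrivial point: given a submodule \(W\) of a finitely generated~\(V\), produce a single compact open~\(K'\) with \(W = \Hecke(G)\cdot W^{K'}\). After that, Theorem~\ref{thm:Hecke-algebra-Noetherian} finishes the job exactly as you say. However, your treatment of this key step is only a gesture, not an argument. You invoke ``induction on the semisimple rank'' and list properties (a), (c), (e) of Lemma~\ref{lem:preserve-finite-type} together with Theorem~\ref{thm:Jacquet-restriction-dual}, but you never explain how applying Jacquet restriction to~\(W\) and knowing the result for proper Levi subgroups lets you bound the level at which~\(W\) is generated. Knowing that each \(\Jacr_G^P(W)\) is generated by its \(K_M\)\nb-invariants does not by itself yield a level for~\(W\) in~\(G\); one needs an additional mechanism to pass back from Levi subgroups to~\(G\), and your sketch does not supply it. The base case is likewise left as an assertion.

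The paper's proof handles this step by a completely different and more direct route, avoiding any induction on rank. It invokes the categories \(\Mod{e}\) from~\cite{Meyer-Solleveld:Resolutions}: smooth modules generated by their \(\UC{x}^{(e)}\)\nb-invariants as~\(x\) ranges over vertices of the building. The point is that \(\Mod{e}\) is a \emph{Serre subcategory} of \(\Mod{\Hecke(G)}\), hence closed under subobjects. Since~\(V\) is finitely generated, \(V\in\Mod{e}\) for large enough~\(e\); therefore \(W\in\Mod{e}\) as well, which is exactly the statement that~\(W\) is generated by~\(W^{K'}\) for suitable~\(K'\). This replaces your unproven inductive reduction by a single structural fact about the building-theoretic filtration of the category.
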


\begin{proof}
  Let~\(\Mod{e}\) be the category of all smooth \(\Hecke(G)\)-modules that are generated by the sum of their \(\UC{x}^{(e)}\)-invariants for all vertices in the building.  It is shown in \cite{Meyer-Solleveld:Resolutions}*{Section~3} that this is a Serre subcategory of \(\Mod{\Hecke(G)}\).  In particular, subrepresentations of representations in \(\Mod{e}\) again belong to \(\Mod{e}\).  Furthermore, \(\Mod{e}\) is equivalent to the category of \(u_\Delta^{(e)} \Hecke(G) u_\Delta^{(e)}\)-modules for a certain idempotent \(u_\Delta^{(e)}\in \Hecke(G)\) constructed from the idempotents \(\idem{\UC{x}^{(e)}}\) for the vertices of a chamber in the building.

  Now we may deduce the assertion from Theorem~\ref{thm:Hecke-algebra-Noetherian}.  We must show that any submodule~\(Y\) of a finitely generated smooth \(\Hecke(G)\)-module~\(V\) is again finitely generated.  Since~\(V\) is finitely generated, it is generated by the subspace of \(\UC{x}^{(e)}\)-invariants for sufficiently large~\(e\) and hence belongs to \(\Mod{e}\).  So does~\(Y\).  Thus~\(Y\) is generated by \(Y^K\) for some sufficiently small compact open subgroup.  But \(Y^K\subseteq V^K\) is finitely generated as a \(\Hecke(G/\!\!/K)\)-module by Theorem~\ref{thm:Hecke-algebra-Noetherian}.  Then~\(Y\) is finitely generated as a \(\Hecke(G)\)-module.
\end{proof}

\begin{bibdiv}
  \begin{biblist}
\bib{Bernstein:Second_adjointness}{article}{
  author={Bernstein, Joseph N.},
  title={Second adjointness for reductive $p$\nobreakdash -adic groups},
  date={1987},
  status={draft},
  note={available at \url {http://www.math.uchicago.edu/~mitya/langlands.html}},
}

\bib{Bruhat-Tits:Reductifs_I}{article}{
  author={Bruhat, Fran\c {c}ois},
  author={Tits, Jacques},
  title={Groupes r\'eductifs sur un corps local I. Donn\'ees radicielles valu\'ees},
  language={French},
  journal={Inst. Hautes \'Etudes Sci. Publ. Math.},
  number={41},
  date={1972},
  pages={5--251},
  issn={0073-8301},
  review={\MRref {0327923}{48\,\#6265}},
}

\bib{Bruhat-Tits:Reductifs_II}{article}{
  author={Bruhat, Fran\c {c}ois},
  author={Tits, Jacques},
  title={Groupes r\'eductifs sur un corps local. II. Sch\'emas en groupes. Existence d'une donn\'ee radicielle valu\'ee},
  language={French},
  journal={Inst. Hautes \'Etudes Sci. Publ. Math.},
  number={60},
  date={1984},
  pages={197--376},
  issn={0073-8301},
  review={\MRref {756316}{86c:20042}},
}

\bib{Bushnell:Localization_Hecke}{article}{
  author={Bushnell, Colin J.},
  title={Representations of reductive \(p\)\nobreakdash -adic groups: localization of Hecke algebras and applications},
  journal={J. London Math. Soc. (2)},
  volume={63},
  date={2001},
  number={2},
  pages={364--386},
  issn={0024-6107},
  review={\MRref {1810135}{2001m:22034}},
}

\bib{Casselman:Representations_padic_groups}{article}{
  author={Casselman, William A.},
  title={Introduction to the theory of admissible representations of \(p\)-adic reductive groups},
  date={1995},
  status={draft},
}

\bib{Casselman:Characters_Jacquet}{article}{
  author={Casselman, William A.},
  title={Characters and Jacquet modules},
  journal={Math. Ann.},
  volume={230},
  date={1977},
  number={2},
  pages={101--105},
  issn={0025-5831},
  review={\MRref {0492083}{58\,\#11237}},
}

\bib{Dat:Finitude}{article}{
  author={Dat, Jean-Francois},
  title={Finitude pour les repr\'esentations lisses de groupes $p$\nobreakdash -adiques},
  language={French, with English and French summaries},
  journal={J. Inst. Math. Jussieu},
  volume={8},
  date={2009},
  number={2},
  pages={261--333},
  issn={1474-7480},
  review={\MRref {2485794}{}},
}

\bib{Guiraud:Master}{thesis}{
  author={Guiraud, David},
  title={Jacquet's Functors in the Representation Theory of Reductive $p$\nobreakdash -Adic Groups},
  type={Diplomarbeit},
  institution={Georg-August-Universit\"at G\"ottingen},
  date={2009},
}

\bib{Meyer:Smooth}{article}{
  author={Meyer, Ralf},
  title={Smooth group representations on bornological vector spaces},
  language={English, with English and French summaries},
  journal={Bull. Sci. Math.},
  volume={128},
  date={2004},
  number={2},
  pages={127--166},
  issn={0007-4497},
  review={\MRref {2039113}{2005c:22013}},
}

\bib{Meyer:Smooth_rough}{article}{
  author={Meyer, Ralf},
  title={Smooth and rough modules over self-induced algebras},
  date={2009},
  status={eprint},
  note={\arxiv {0911.3882}},
}

\bib{Meyer-Solleveld:Resolutions}{article}{
  author={Meyer, Ralf},
  author={Solleveld, Maarten},
  title={Resolutions for representations of reductive $p$\nobreakdash -adic groups via their buildings},
  date={2009},
  journal={J. Reine Angew. Math.},
  status={to appear},
  note={\arxiv {0902.4856}},
}

\bib{Meyer-Solleveld:Characters_growth}{article}{
  author={Meyer, Ralf},
  author={Solleveld, Maarten},
  title={Characters and growth of admissible representations of reductive \(p\)\nobreakdash -adic groups},
  date={2009},
  status={eprint},
  note={\arxiv {0908.1489}},
}

\bib{Schneider-Stuhler:Rep_sheaves}{article}{
  author={Schneider, Peter},
  author={Stuhler, Ulrich},
  title={Representation theory and sheaves on the Bruhat--Tits building},
  journal={Inst. Hautes \'Etudes Sci. Publ. Math.},
  number={85},
  date={1997},
  pages={97--191},
  issn={0073-8301},
  review={\MRref {1471867}{98m:22023}},
}

\bib{Vigneras:l-modulaires}{book}{
  author={Vign\'eras, Marie-France},
  title={Repr\'esentations $l$\nobreakdash -modulaires d'un groupe r\'eductif $p$\nobreakdash -adique avec $l\neq p$},
  language={French, with English summary},
  series={Progress in Mathematics},
  volume={137},
  publisher={Birkh\"auser Boston Inc.},
  place={Boston, MA},
  date={1996},
  pages={xviii and 233},
  isbn={0-8176-3929-2},
  review={\MRref {1395151}{97g:22007}},
}
  \end{biblist}
\end{bibdiv}
\end{document}